\newtheorem{theorem}{Theorem}[section]
\newtheorem{corollary}[theorem]{Corollary}
\newtheorem{lemma}[theorem]{Lemma}
\newtheorem{prop}[theorem]{Proposition}
\theoremstyle{definition}
\newtheorem{rem}[theorem]{Remark}
\numberwithin{equation}{section}
\newcommand\R{\mathbb{R}}
\newcommand\Z{\mathbb{Z}}
\newcommand\N{\mathbb{N}}
\newcommand\eps{\varepsilon}
\renewcommand{\Re}{\textnormal{Re}}
\renewcommand{\Im}{\textnormal{Im}}
\newcommand{\sumflat}{\mathop{{\sum}^{\raisebox{-3pt}{\makebox[0pt][l]{$\flat$}}}}}
\newlength{\lyxlabelwidth}      
\begin{document}

\title{On the Real Zeroes of Half-integral Weight Hecke Cusp Forms, II}

\author{Jesse J\"a\"asaari}
\address{Department of Mathematics and Statistics\\
  University of Turku\\
  20014 Turku, Finland}
  \email{jesse.jaasaari@utu.fi}
  
\subjclass[2010]{Primary 11F37; Secondary 11M06}
  
\begin{abstract}   
We show that for $\gg K^2$ of the half-integral weight Hecke cusp forms in Kohnen plus subspaces with weight bounded by a large parameter $K$, the number of "real" zeroes grows at the expected rate. A key technical step in the proof is to obtain sharp bounds for the mollified first and second moments of quadratic twists of modular $L$-functions. 
\end{abstract}
  
\maketitle

\section{Introduction and the main result}

\noindent One of the striking consequences of the holomorphic Quantum Unique Ergodicity (QUE) conjecture of Rudnick and Sarnak \cite{Rudnick-Sarnak1994, Luo-Sarnak2004} (which is now a theorem thanks to the work of Holowinsky and Soundararajan \cite{Holowinsky-Soundararajan2010}) is that the zeroes of holomorphic Hecke cusp forms equidistribute inside the fundamental domain $\mathcal D:=\mathrm{SL}_2(\mathbb Z)\backslash\mathbb H$, where $\mathbb H$ is the upper half of the complex plane, as the weight of the form tends to infinity. This was proved by Rudnick \cite{Rudnick2005} building upon the work of Shiffman and Zelditch \cite{Shiffman-Zelditch1999} in the case of compact manifolds. Subsequent investigations concerning the distribution of zeroes in small scales were initiated by Ghosh and Sarnak \cite{Ghosh-Sarnak2012}, and then continued by Matom\"aki \cite{Matomaki2016} and Lester--Matom\"aki--Radziwi{\l\l} \cite{Lester-Matomaki-Radziwill2018}. In a recent work \cite{Jaasaari2026} the author obtained results in the spirit of the latter work in the setting of half-integral weight Hecke cusp forms. We refer to the introduction of \cite{Jaasaari2026} for the differences between the half-integral weight case and the integral weight case, but for now we only briefly mention that an important reason why the methods of \cite{Ghosh-Sarnak2012, Matomaki2016, Lester-Matomaki-Radziwill2018} do not adapt to the half-integral weight setting in a straightforward way is that the Fourier coefficients of half-integral weight Hecke cusp forms are not multiplicative. 

In this paper we are interested in the real\footnote{This notion comes from the fact that a half-integral weight form is real-valued on the lines $\delta_1$ and $\delta_2$ when it is normalised to have real Fourier coefficients.} zeroes of half-integral weight Hecke cusp forms, that is zeroes on the two geodesic segments
\[ 
\delta_1:=\left\{s\in\mathbb C:\,\Re(s)=-\frac12\right\} \qquad \text{and} \qquad \delta_2:=\left\{s\in\mathbb C:\,\Re(s)=0\right\}.
\]
\noindent For motivation and related results for integral weight Hecke cusp forms, we refer to the introduction of \cite{Jaasaari2026} as well as the papers \cite{Ghosh-Sarnak2012, Matomaki2016, Lester-Matomaki-Radziwill2018}.

Let us now summarise the main results of \cite{Jaasaari2026}. Before that we need to introduce some notation. Throughout the article, let $k$ be a positive integer. We write $S_{k+\frac12}(4)$ for the space of cusp forms of weight $k+\frac12$ and level $4$. Also we denote by $S_{k+\frac12}^+(4)\subset S_{k+\frac12}(4)$ the Kohnen plus subspace and let $B^+_{k+\frac12}$ denote a fixed Hecke eigenbasis for $S_{k+\frac12}^+(4)$. Note that for half-integral weight cusp forms we cannot normalise the first Fourier coefficient $c_g(1)$ to be equal to one without losing the algebraicity of the Fourier coefficients. This means that, unlike in the integral weight case, there is no canonical choice for $B_{k+\frac12}^+$, but this causes no problems for us. We write $\mathcal Z(g):=\{z\in\mathcal F:\,g(z)=0\}$ for the set of zeroes of $g\in S_{k+\frac12}^+(4)$ and let $\mathcal F_Y:=\{z\in\mathcal F\,:\,\Im(z)\geq Y\}$ be a Siegel set, where $\mathcal F:=\Gamma_0(4)\backslash\mathbb H$. Finally, set\footnote{Here, and throughout the paper, the notation $\ell\sim L$ means that $L\leq \ell\leq 2L$.}
\[\mathcal S_K:=\bigcup_{k\sim K}B_{k+\frac12}^+\]
and note that the cardinality of this set is $\sim K^2/4$.

With these notations\footnote{We also write $\#\mathcal X$ for the cardinality of a set $\mathcal X$.} the main results of \cite{Jaasaari2026} may be stated as follows. 

\begin{theorem}(\cite[Theorems 1.1. and 1.3]{Jaasaari2026})\label{prior-theorem} 
Let $K$ be a large parameter, $\eps>0$ be an arbitrarily small fixed number, and $j\in\{1,2\}$.
\begin{enumerate}
\item For $\gg_\eps K^2/(\log K)^{3/2+\eps}$ of the forms $g\in\mathcal S_K$ we have 
\[\#\{z\in\mathcal Z(g)\cap\delta_j\cap\mathcal F_Y\}\gg\frac K{Y}\left(\log K\right)^{-23/2}  \]
for $\sqrt{K\log K}\leq Y\leq K^{1-\delta}$ with any small fixed constant $\delta>0$. 
\item For at least $(1/2-\eps)\#\mathcal S_K$ of the forms $g\in\mathcal S_K$ we have 
\[\#\{z\in\mathcal Z(g)\cap\delta_j\cap\mathcal F_Y\}\gg\sqrt{\frac K{Y}} \]
for $\sqrt{K\log K}\leq Y\leq K^{1-\delta}$ with any small fixed constant $\delta>0$. 
\end{enumerate}
\end{theorem}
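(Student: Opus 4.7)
The plan is to follow the Ghosh--Sarnak / Matom\"aki / Lester--Matom\"aki--Radziwi{\l\l} paradigm, adapted to the half-integral weight setting. The first step is to reduce zero-counting to counting sign changes: on $\delta_j$ (parametrised by the imaginary part $y$) the form $g$, once normalised to have real Fourier coefficients, is real-valued, so each zero of $g$ in $\delta_j\cap\mathcal F_Y$ is at least a sign change. Partition the relevant piece of $\delta_j$ (heights $y\in[Y,Y']$ for appropriate truncation $Y'$) into short subintervals $I$ of length comparable to the local zero spacing $Y/K$ near $y\asymp Y$; this gives $\sim K/Y$ candidate intervals. The aim is to show that enough of these intervals each contain a sign change of $g$.

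On each short interval $I$ a sign change is obtained from the Cauchy--Schwarz dichotomy: if $\bigl(\int_I g\,\d y\bigr)^2\gg |I|\int_I g^2\,\d y$, then $g$ cannot be of constant sign on $I$. Using the Fourier expansion $g(iy)=\sum_n c_g(n)e^{-2\pi ny}$ on $\delta_2$ (and the analogue on $\delta_1$), an integral of $g$ over $I$ at height $\asymp Y$ turns into a short weighted sum of the $c_g(n)$ with $n\asymp K/Y$. Via the Kohnen--Zagier formula, $c_g(|D|)^2$ for fundamental discriminants $D$ is, up to explicit archimedean factors, the central value $L(1/2,f\otimes\chi_D)$, where $f\in S_{2k}$ is the Shimura lift of $g$. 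Consequently, the first and second moments of these integrals, once summed over $g\in\mathcal S_K$, reduce to averages of $L(1/2,f\otimes\chi_D)^{1/2}$ (respectively $L(1/2,f\otimes\chi_D)$) over the Shimura lifts $f$ of weight $2k$ with $k\sim K$ and over discriminants $|D|\asymp K/Y$.

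For part (2) one computes a mollified first moment and a mollified second moment of these $L$-values with a carefully chosen Dirichlet polynomial mollifier; Cauchy--Schwarz then yields, for a positive proportion of pairs $(f,D)$, a lower bound on $L(1/2,f\otimes\chi_D)$ of the expected order. Translating this back gives a positive proportion of short intervals on which $g$ changes sign, and the $\sqrt{K/Y}$ count is exactly the square-root loss incurred by restricting to first and second moments only. For part (1) one instead squeezes a sharper lower bound out of the first moment (so that $g$ is shown to be comparatively large on essentially all of the $\sim K/Y$ intervals) and combines it with an $L^2$ or $L^\infty$ upper bound on $g$ in $\mathcal F_Y$; the losses $(\log K)^{3/2+\eps}$ in the proportion of good forms and $(\log K)^{23/2}$ per form then arise from the mollifier length and from the logarithmic loss inherent in converting an averaged lower bound on $L$-values into pointwise information on individual forms.

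The main obstacle is to obtain the \emph{sharp} mollified second moment for the family $L(1/2,f\otimes\chi_D)$: the analytic conductor of the square is of size $D^2k^2$, placing this second moment at the very edge of what is currently accessible, so the off-diagonal contribution has to be treated by Poisson summation in the character variable followed by the large sieve for quadratic characters. Uniformity in both $D$ and the weight $2k$, together with the need to match the first moment without logarithmic loss, forces a delicate balance in the mollifier length. Secondary obstacles are the non-multiplicativity of $c_g(n)$ --- which makes the detour through the Shimura correspondence unavoidable --- and the passage from a single sign change on an interval to an actual real zero inside $\mathcal F_Y$, which requires uniform control of $g$ and of its derivatives on the Siegel set.
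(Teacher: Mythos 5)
Your proposal captures the right general paradigm (reduce real zeroes to sign changes of Fourier coefficients at fundamental discriminants, pass to $L(\tfrac12,f\otimes\chi_d)$ via Kohnen--Zagier, and run a moment comparison), but it misses how both parts of this theorem are actually proved in \cite{Jaasaari2025}, and one of your key steps is invalid as stated. The sign-change criterion ``if $(\int_I g)^2\gg |I|\int_I g^2$ then $g$ changes sign on $I$'' is backwards: Cauchy--Schwarz gives $(\int_I g)^2\le |I|\int_I g^2$ unconditionally, with near-equality precisely when $g$ is nearly constant, so the hypothesis can never witness a sign change (take $g\equiv 1$). The detector actually used is the strict inequality between $\bigl|\sumflat_{x\leq(-1)^kd\leq x+H}\sqrt{\alpha_g}c_g(|d|)\bigr|$ and $\sumflat_{x\leq(-1)^kd\leq x+H}\sqrt{\alpha_g}|c_g(|d|)|$ over short intervals of \emph{discriminants}, after Proposition \ref{Fourier-coeff} reduces point values $g(\alpha+iy_\ell)$ to single coefficients $c_g(\ell)e(\alpha\ell)$ (no integration over $I$ is needed); and lower-bounding the absolute-value sum forces a H\"older interpolation between the \emph{second and fourth} moments of $|c_g(|d|)|$, i.e.\ the first and second moments of $L(\tfrac12,f\otimes\chi_d)$ --- this is Lemma \ref{unmollified_moments}. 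Your proposed ``first moment of $L^{1/2}$'' is not directly computable and must in any case be reached by exactly this interpolation.

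Two further mismatches are substantive. Part (1) of the quoted theorem uses \emph{no mollifier}: the losses $(\log K)^{3/2+\eps}$ and $(\log K)^{23/2}$ arise from the mismatch between the unmollified second moment ($\asymp XK$) and fourth moment ($\ll XK\log(XK)$) together with the choice $H=(\log X)^9$; introducing a mollifier to equalise these moments is the whole point of the present paper's Theorem \ref{Main-theorem}, not of the statement you are proving, so attributing the logarithmic losses to ``the mollifier length'' inverts the logic. Part (2) is proved by a genuinely different mechanism: one exploits that the coefficients $c_g(n^2)$ are governed by a multiplicative function of $n$ via the Shimura correspondence and applies the Matom\"aki--Radziwi{\l\l} sign-change machinery as in \cite{Lester-Matomaki-Radziwill2018}; the count $\sqrt{K/Y}$ comes from the fact that only the $\asymp\sqrt{K/Y}$ squares $n^2\asymp K/Y$ are usable, not from ``a square-root loss from using only two moments,'' and a bare first/second moment argument with Cauchy--Schwarz would not produce the proportion $(1/2-\eps)$ of forms.
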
 
\noindent The first part is proved by studying sign changes of Fourier coefficients $c_g(|d|)$ of a half-integral weight cusp form $g$ at fundamental discriminants $d$. The loss of powers of logarithm in part (1) is ultimately due to the fluctuation in the size of $|c_g(|d|)|$. Indeed, the study of sign changes relies on estimating the second and fourth moments of the Fourier coefficients and a consequence of the variation in the size of $|c_g(|d|)|$ is that these moments are not of the same order of magnitude as the fourth moment amplifies the large values of $|c_g(|d|)|$. The proof of the second part exploited the multiplicativity of $c_g(n)$ at squares following the arguments of \cite{Lester-Matomaki-Radziwill2018}. 

In the same article it was suggested that it might be possible to obtain a result that holds for a positive proportion of forms in $\mathcal S_K$ with $\gg K/Y$ real zeroes in $\mathcal F_Y$ by introducing a suitable mollifier to the argument (we shall review the approach of \cite{Jaasaari2026} in the next section). The goal of the present work is to verify this prediction. In the following section we introduce a mollifier that is suitable for our aim. The mollifier $M_g(d)$ is chosen so that typically $|c_g(|d|)|M_g(d)$ is of constant size.
Our main result is the following, which unifies and improves the results in Theorem \ref{prior-theorem}. 

\begin{theorem}\label{Main-theorem}
Let $K$ be a large parameter and $j\in\{1,2\}$. Then for $\gg K^2$ of the forms $g\in\mathcal S_K$ we have 
\[\#\{z\in\mathcal Z(g)\cap\delta_j\cap\mathcal F_Y\}\gg\frac K{Y}  \]
for $\sqrt{K\log K}\leq Y\leq K^{1-\vartheta}$ with any small fixed constant $\vartheta>0$. 
\end{theorem}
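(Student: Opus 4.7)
My plan is to follow the sign-change framework developed in \cite{Jaasaari2025}, upgrading the moment estimates at its core with a mollifier. The reduction carried out there shows that, in order to produce $\gg K/Y$ real zeroes on $\delta_j\cap\mathcal F_Y$ for $\gg K^2$ forms $g\in\mathcal S_K$, it suffices to exhibit $\gg K/Y$ sign changes of $c_g(|d|)$ as $|d|$ ranges over fundamental discriminants (in the appropriate residue class for the Kohnen plus space) inside an interval $I$ of length $\asymp N:=K/Y$. Recall that part (i) of Theorem \ref{prior-theorem} obtained this with a loss of $(\log K)^{23/2}$ because the ratio of the fourth to the square of the second moment of $c_g(|d|)$ is of that size, reflecting genuine fluctuations in $|c_g(|d|)|$. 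I would therefore introduce a mollifier $M_g(d)$ -- a short Dirichlet polynomial of length $L=K^{\theta}$ for small $\theta>0$, defined in the next section -- whose raison d'\^etre is the condition $|c_g(|d|)M_g(d)|\asymp 1$ on average, flattening the fluctuations and allowing the argument to be run at the level of the second moment.

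By the Waldspurger--Kohnen--Zagier identity, $|c_g(|d|)|^2/\|g\|^2$ equals (up to explicit archimedean and arithmetic factors) the central $L$-value $L(\tfrac12,f_g\otimes\chi_d)$, where $f_g\in S_{2k}(\mathrm{SL}_2(\Z))$ is the Shimura lift of $g$ and $\chi_d$ is the Kronecker symbol; hence $M_g(d)$ is naturally modelled on a truncated Dirichlet series for $L(\tfrac12,f_g\otimes\chi_d)^{-1/2}$. With this choice the two asymptotic formulas needed,
\[
\sum_{g\in\mathcal S_K}\sum_{d\in I}w(d)\,|c_g(|d|)M_g(d)|^{2r}\;\asymp\;|\mathcal S_K|\,|I|\qquad (r=1,2),
\]
correspond precisely to the mollified first and second moments of quadratic twists of modular $L$-functions alluded to in the abstract. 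The crucial feature is that both moments are of the \emph{same} order of magnitude, so a Cauchy--Schwarz argument detects the lower bound $|c_g(|d|)M_g(d)|\asymp 1$ for a positive proportion of pairs $(g,d)$; a Chebyshev-type argument then extracts from this a positive proportion of forms $g\in\mathcal S_K$ each of which has a positive proportion of ``good'' fundamental discriminants in $I$. Feeding these into the sign-change mechanism of \cite{Jaasaari2025}, which requires exactly an input of the form ``positive proportion of $d$ have $|c_g(|d|)|$ of typical size'', delivers $\gg N=K/Y$ sign changes per typical form, without any logarithmic losses.

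The hard part will be the sharp evaluation of the mollified second moment of $L(\tfrac12,f\otimes\chi_d)$, averaged over Shimura lifts $f$ of weight $2k$ with $k\sim K$ and over fundamental $d\in I$. Unfolding the $L$-value by an approximate functional equation of length $\asymp kN$ and opening $|M_g(d)|^2$ as a Dirichlet polynomial of length $L^2=K^{2\theta}$ yields character sums of total length $\asymp kNL^2$. Extracting the expected diagonal main term is relatively routine, but obtaining a power saving on the off-diagonal is the heart of the matter; I would expect this to require combining the Petersson trace formula in weight $2k$, the Jutila large sieve for real characters, and a careful stationary-phase treatment of the archimedean transforms. The admissible mollifier length $\theta$ is constrained precisely by this off-diagonal analysis, and making $\theta$ large enough that the Cauchy--Schwarz step yields a \emph{genuine} positive proportion of good discriminants is what the paper must ultimately achieve.
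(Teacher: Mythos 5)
Your high-level architecture coincides with the paper's: mollify so that $|c_g(|d|)M_g(d)|$ is typically of size one, prove that the mollified second and fourth moments of the Fourier coefficients (equivalently the mollified first and second moments of $L(\tfrac12,f\otimes\chi_d)$ averaged over $d\sim X=K/Y$, $f\in B_k$ and $k\sim K$) are both $\asymp XK$, and then run the Chebyshev/sign-change mechanism of \cite{Jaasaari2025} to convert $\gg K/Y$ sign changes into real zeroes for $\gg K^2$ forms. Your identification of the two required moment bounds (Propositions \ref{second-moment-with-average} and \ref{fourth-moment-with-average} in the paper) and of the final deduction is exactly right.

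The gap is in the step you yourself flag as the heart of the matter. First, the off-diagonal of the mollified fourth moment cannot be disposed of by a large sieve for real characters: that is precisely the route taken for the \emph{unmollified} bound in \cite{Jaasaari2025} (via Deshouillers--Iwaniec), it inherently loses the factor $\log(XK)$ that the whole mollification is designed to remove, and the paper states explicitly that this strategy fails once the mollifier is inserted. What actually happens is more delicate: after the Petersson formula the diagonal is written, via Mellin inversion and a random (Sato--Tate) model for the Hecke eigenvalues, as a double contour integral whose integrand has poles at $s_2=0$ and $s_2=-s_1$, and the residue at $s_2=-s_1$ \emph{exactly cancels} the main part of the off-diagonal, which is extracted by Poisson summation in $d$, an explicit evaluation of a twisted Kloosterman-type character sum (Proposition \ref{twisted-Kloosterman-sum}), and stationary phase for the Bessel transforms following Khan. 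Without this cancellation no sharp $\ll XK$ bound is available. Second, your mollifier --- a single truncated Dirichlet series for $L(\tfrac12,f\otimes\chi_d)^{-1/2}$ of length $K^\theta$ --- is not the object the computation can digest: to keep $M_g(d)^4$ a tractable, essentially multiplicative Dirichlet polynomial one needs the Radziwi{\l\l}--Soundararajan splitting of the primes up to $K^{\theta_J}$ into ranges $I_0,\dots,I_J$ with truncated exponentials on each range, and it is this structure (together with the independence of the random variables at distinct primes) that reduces the diagonal to local Euler-factor computations bounded by Mertens' theorem. These two ingredients, not the large sieve, are what make the positive-proportion result go through.
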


\begin{rem} (1) It might be possible to extract an explicit proportion of forms for which Theorem \ref{Main-theorem} holds from our argument, but this seems difficult and so we refrain from trying this. \\

\noindent (2) In the range $\sqrt{k\log k}\leq Y<\frac1{100}k$ it is expected that 
\[ 
\#\{z\in\mathcal Z(g)\cap\mathcal F_Y\}\asymp\frac kY
\]
for $g\in S_{k+\frac12}^+(4)$. 
\end{rem}
\noindent Theorem \ref{Main-theorem} should be compared to results of Lester, Matom\"aki, and Radziwi{\l\l} \cite{Lester-Matomaki-Radziwill2018} for integral weight forms. They showed that for any fixed $\eps>0$ there exists a subset\footnote{Throughout the paper we write $H_k$ for the set of holomorphic cusp forms of integral weight $k$ and full level. We also write $B_k$ for the Hecke eigenbasis of $H_{2k}$.} $S_k\subset B_k$, containing more than $(1-\eps)\#B_k$ elements, such that for every $f\in S_k$ we have
\[ 
\#\{z\in Z(f)\cap\delta_j\cap\mathcal D_Y\}\geq c(\eps)\#\{Z(f)\cap \mathcal D_Y\}\]
for each $j\in\{1,2\}$ provided that $\sqrt{k\log k}<Y<\delta(\eps)k$ and $k\longrightarrow\infty$ for some positive constants $c(\eps)$ and $\delta(\eps)$ depending only on $\eps$. Here of course $\mathcal D_Y:=\{z\in\mathcal D:\,\Im(z)\geq Y\}$. Furthermore, under the Generalised Lindel\"of Hypothesis they showed that 
\[ 
\#\{z\in Z(f)\cap\delta_j\cap \mathcal D_Y\}\gg_\eps \left(\frac kY\right)^{1-\eps}
\]
in the same range of $Y$.

The proof of their result relies on the breakthrough of Matom\"aki and Radziwi{\l\l} \cite{Matomaki-Radziwill2016} on multiplicative functions in short intervals. Concerning the first result mentioned, note that for every form $f$ we cannot do as well, even on the assumption of the Generalised Lindel\"of or Generalised Riemann Hypothesis (GRH). The reason is that in order to produce real zeroes of $f$ we look at sign changes of the Hecke eigenvalues $\lambda_f(m)$. In order to obtain a positive proportion of the zeroes on the line we need a positive proportion of sign changes between the coefficients of $\lambda_f(m)$ in appropriate ranges of $m$. However, we cannot e.g. rule out the scenario where for all $p\leq (\log k)^{2-\eps}$ we have $\lambda_f(p)=0$, even conditionally. 

Broadly the method used to prove Theorem \ref{Main-theorem} is the same as in our previous work \cite{Jaasaari2026}, which greatly differs from the arguments used in \cite{Ghosh-Sarnak2012, Matomaki2016, Lester-Matomaki-Radziwill2018}. However, computing the mollified moments of $c_g(|d|)$ is significantly more challenging compared to estimating the unmollified moments in \cite{Jaasaari2026}. In particular, the main novelty of the present work is the sharp estimation of the mollified fourth moment of the Fourier coefficients that requires relating the moment in question to a suitable random model. Finally, observe that our result requires considering forms in a larger family $\cup_{k\sim K}B_{k+1/2}^+$ instead of just $B_{k+1/2}^+$. The reason for this is that, similarly as in \cite{Jaasaari2026}, both the averages over the forms $g\in B_{k+1/2}^+$ and the weights $k$ are needed in order to evaluate the mollified moments of the Fourier coefficients sharply, see the discussion in \cite[Section 2]{Jaasaari2026}. 

\section{The Strategy}

\noindent In this section we explain the main ideas behind Theorem \ref{Main-theorem}. But before that we briefly outline the structure of the approach in \cite{Jaasaari2026}. There the main idea was to study sign changes of the Fourier coefficients $c_g(|d|)$ of a half-integral weight Hecke cusp form $g$ along the (odd) fundamental discriminants $d$. It is convenient to normalise the Fourier coefficients $c_g(|d|)$ by the factor $\sqrt{\alpha_g}$, where
\begin{align}\label{Normalisation}
\alpha_g:=\frac{\Gamma\left(k-\frac12\right)}{2(4\pi)^{k-\frac12}\|g\|_2^2}>0.
\end{align}

Throughout this section we set $X=K/Y$. Very briefly, it follows from a steepest descent argument that in order to obtain real zeroes of $g$ one needs to produce fundamental discriminants $d_\pm$ in short intervals $x\leq (-1)^kd_\pm\leq x+H$ with $H$ as small as possible in terms of $X\sim x$ so that $\sqrt{\alpha_g}c_g(|d_-|)<-k^{-\delta}<k^{-\delta}<\sqrt{\alpha_g}c_g(|d_+|)$ for some sufficiently small $\delta>1/2$. For the purpose of exposition we pretend here that we are only looking for numbers $d_\pm$ for which $c_g(|d_-|)<0<c_g(|d_+|)$. To detect such an event one considers the simple fact that 
\[ 
\left|\sumflat_{x\leq (-1)^kd\leq x+H}\sqrt{\alpha_g}c_g(|d|)\right|\leq\sumflat_{x\leq (-1)^kd\leq x+H}\sqrt{\alpha_g}|c_g(|d|)
\]
and notes that the inequality is strict if and only if $c_g(|d|)$ has a sign change in the interval $[x,x+H]$. Here $\sumflat\,$ signifies that we are summing over odd fundamental discriminants. In \cite{Jaasaari2026} it was shown that one is able to detect sign changes for many forms if $H$ is chosen to be a power of $\log K$. Indeed, choosing $H=(\log X)^9$ it was shown that for $\gg_\eps K^2/(\log K)^{3/2+\eps}$ of the forms $g\in S_K$ the inequality 
\[ 
\left|\sumflat_{x\leq (-1)^kd\leq x+H}\sqrt{\alpha_g}c_g(|d|)\right|<\sumflat_{x\leq (-1)^kd\leq x+H}\sqrt{\alpha_g}|c_g(|d|)
\]
holds for $\gg X/(\log X)^{5/2}$ of $x\sim X$ from which one easily deduces part (1) of Theorem \ref{prior-theorem}. The two auxiliary results needed for this argument are contained in the following lemma. Here $\omega_f$ (sometimes we write $\omega_g$ if $g\in S_{k+1/2}^+(4)$ corresponds to $f\in H_{2k}$ under the Shimura correspondence. Similarly for the mollifier we shall use $M_g(d)$ and $M_f(d)$ interchangeably) are the standard harmonic weights that appear in the Petersson trace formula. 

\begin{lemma}(\cite[Lemmas 2.7. and 2.8]{{Jaasaari2026}})\label{unmollified_moments}
Let $h$ and $\phi$ be compactly supported non-negative smooth weight functions on $\mathbb R_+$. Then for $X\ll\sqrt K$ we have that
\begin{enumerate}
\item \[ 
\sum_{k\in\Z}h\left(\frac{2k-1}K\right)\sum_{g\in B_{k+1/2}^+}\alpha_g\sumflat_d\,|c_g(|d|)|^2\phi\left(\frac{|d|}X\right)=\frac{2XK}{3\pi^2}\widehat h(0)\widehat\phi(0)+O_\eps\left(KX^{1/2+\eps}\right).
\]
\item 
\[ 
\sum_{k\in\Z}h\left(\frac{2k-1}K\right)\sum_{g\in B_{k+\frac12}^+}\alpha_g^2\omega_g^{-1}\sumflat_d\,|c_g(|d|)|^4\phi\left(\frac{|d|}X\right)\ll XK\log (XK).
\]
\end{enumerate}
\end{lemma}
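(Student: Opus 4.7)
The natural starting point is the Kohnen--Zagier formula (a case of Waldspurger's theorem): for $g \in B_{k+1/2}^+$ with Shimura lift $f \in B_k$ and an odd fundamental discriminant $d$ with $(-1)^k d > 0$,
\[
\alpha_g |c_g(|d|)|^2 \;=\; \kappa_{k,|d|}\,\omega_f\, L(1/2, f\otimes\chi_d),
\]
where $\kappa_{k,|d|}$ is an explicit factor coming from the ratio of $\alpha_g$, $\omega_f$, and the archimedean/weight factor in Kohnen--Zagier, whose size is controlled by Stirling. This converts the left-hand sides of (1) and (2) into averages of $L(1/2, f\otimes\chi_d)$ and $L(1/2, f\otimes\chi_d)^2$ over $f\in B_k$, $k\sim K$, and fundamental $d$, all with harmonic weights $\omega_f$.

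For part (1), I would insert the standard approximate functional equation
\[
L(1/2, f\otimes\chi_d) \;=\; 2\sum_{n\geq 1}\frac{\lambda_f(n)\chi_d(n)}{\sqrt n}\,V\!\left(\frac{n}{k\sqrt{|d|}}\right),
\]
apply the Petersson trace formula to $\sum_f \omega_f\lambda_f(n)$, and extract the diagonal $n=1$. Combining this with the evaluation of $\sumflat_d |d|^{-1/2}\phi(|d|/X)$ over odd fundamental discriminants (whose density in the integers contributes the rational constant $2/(3\pi^2)$) and averaging over $k\sim K$ against $h$ yields the claimed main term $2XK\widehat h(0)\widehat\phi(0)/(3\pi^2)$. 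For part (2), apply Kohnen--Zagier twice to reduce to the second moment of $L(1/2,f\otimes\chi_d)$, use the approximate functional equation for $L^2$, and expand via Hecke multiplicativity $\lambda_f(m)\lambda_f(n)=\sum_{e\mid(m,n)}\lambda_f(mn/e^2)$ before applying Petersson. Since only an upper bound is required, it suffices to bound the diagonal contribution where $mn/e^2$ is a perfect square, which sums to an $L(1,\mathrm{sym}^2 f)$-type quantity whose logarithmic growth supplies the $\log(XK)$ factor, giving the overall bound $XK\log(XK)$.

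The principal obstacle in both parts is the off-diagonal analysis. Opening the Kloosterman sums $S(1,n;c)$ (and their analogue $S(1,mn/e^2;c)$ in part (2)) against the $J$-Bessel weights from Petersson, and then applying Poisson summation to the sum over $d$---which converts $\chi_d(\cdot)$ into a Gauss sum of a quadratic character together with a congruence on the dual variable---produces a delicate interplay. In part (1) one must isolate and handle a secondary contribution coming from $n$ a perfect square, which threatens to give an extra main term and must either be shown to vanish (e.g.\ through the sign of the functional equation) or to combine cleanly with the diagonal; in part (2) the double sum over $(m,n)$ and the additional $\lambda_f$-squared structure amplify the combinatorics. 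The hypothesis $X\ll\sqrt K$ is precisely what keeps the effective support of the Bessel transform short enough that standard Weil/Sali\'e bounds on the resulting character sums yield the error $O(KX^{1/2+\varepsilon})$ in (1) and control the off-diagonal in (2) comfortably within the bound $XK\log(XK)$.
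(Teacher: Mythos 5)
First, a remark on provenance: the paper does not prove this lemma here; it quotes it from \cite{Jaasaari2025} and only sketches the methods. For part (1) the cited proof applies the \emph{half-integral-weight} Petersson formula directly to the coefficients $c_g(|d|)$ and then Poisson summation in $d$ and $k$, whereas you go through Waldspurger, the approximate functional equation, and the integral-weight Petersson formula. Your route is legitimate and is essentially the mollifier-free case of the computation carried out in Section 8 of the present paper: after Petersson the diagonal is just $n=1$, and since the AFE length is $n\ll k|d|K^{\eps}\ll K^{3/2+\eps}$, the Bessel argument $4\pi\sqrt n/c\ll K^{3/4+\eps}$ is far below the order $2k-1\asymp K$, so the entire off-diagonal is exponentially small by \eqref{J-Bessel}. (Your worry about a secondary main term from $n=\square$ only arises if one averages over $d$ \emph{before} applying Petersson; in your order of operations it does not occur.) So part (1) is fine, by a genuinely different but workable route.

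Part (2) has a genuine gap. After Waldspurger and the AFE for $L(1/2,f\otimes\chi_d)^2$ the Dirichlet-series length is $m_1m_2\ll (K|d|)^2K^{\eps}$, so the Bessel argument $4\pi\sqrt{m_1m_2}/(ec)$ can be as large as $KX\ll K^{3/2}$, which \emph{exceeds} the order $2k-1\asymp K$: the hypothesis $X\ll\sqrt K$ does not put the off-diagonal in the decay range of $J_{2k-1}$, and pointwise Weil/Sali\'e bounds on the resulting Kloosterman and Gauss sums give a contribution of size a positive power of $K$ larger than $XK$ (a crude count gives at least $K^2X^{5/2}$). This is exactly why the cited proof of part (2) does \emph{not} estimate the off-diagonal via character-sum bounds at all: it applies a Deshouillers--Iwaniec large sieve inequality to the bilinear $(f,k)$-average and only then sums trivially over $d$; and it is why the present paper's mollified analogue (Sections 9--11) needs the full machinery of stationary phase, Poisson summation in $d$, the character-sum evaluation of Proposition \ref{twisted-Kloosterman-sum}, and ultimately a cancellation of the Petersson off-diagonal against the residue at $s_2=-s_1$ of the diagonal term. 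Your diagonal analysis (the $mn/e^2$ bookkeeping and the $L(1,\mathrm{sym}^2f)$-type $\log$) correctly produces the $XK\log(XK)$ main contribution, but the assertion that the off-diagonal is "comfortably" controlled by standard bounds is where the proposal fails; a substantive input (the spectral large sieve, or an off-diagonal main-term cancellation) is missing.
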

\noindent The loss of powers of the logarithm in Theorem \ref{prior-theorem} stems from the fact that in the previous lemma the bounds for the second and fourth moments of the Fourier coefficients are not of the same order of magnitude. As mentioned above, the reason for this is that the size of $|c_g(|d|)|$ fluctuates as $d$ traverses over the fundamental discriminants. 

To remedy this one can introduce a positive quantity $M_g(d)$ with the property that $|c_g(|d|)|M_g(d)\approx \omega_g^{1/2}\alpha_g^{-1/2}$ and evaluate the moments of $c_g(|d|)M_g(d)$. This then yields information about the sign changes of $c_g(|d|)$ as $M_g(d)>0$. Below we shall choose the mollifier $M_g(d)$ so that, assuming $X\ll\sqrt{K}$, 
\begin{align*}
&\sum_{k\in\Z}h\left(\frac{2k-1}K\right)\sum_{g\in B_{k+1/2}^+}\alpha_g^2\omega_g^{-1}\sumflat_d\,|c_g(|d|)|^4M_g(d)^4\phi\left(\frac{|d|}X\right)\\
&\asymp\sum_{k\in\Z}h\left(\frac{2k-1}K\right)\sum_{g\in B_{k+1/2}^+}\alpha_g\sumflat_d\,|c_g(|d|)|^2M_g(d)^2\phi\left(\frac{|d|}X\right)\asymp XK
\end{align*}
for compactly supported smooth weight functions $h$ and $\phi$, which then implies Theorem \ref{Main-theorem} by repeating the arguments of \cite{Jaasaari2026} with $c_g(|d|)$ replaced by $c_g(|d|)M_g(|d|)$. 

We now compare the strategies to compute the mollified moments to the methods used to establish Lemma \ref{unmollified_moments} in \cite{Jaasaari2026}. Concerning the second moment, in \cite{Jaasaari2026} one executes the $g$-sum using the half-integral weight variant of the Petersson trace formula after which the $d$- and $k$-sums are evaluated by Poisson summation. For the mollified second moment we instead first use Waldspurger's formula to express $|c_g(|d|)|^2$ as a central $L$-value on which an approximate functional equation is then applied. At this point the sum over $f\in B_k$ is evaluated by the Petersson formula for integral weight forms. After that the $d$- and $k$-sums are evaluated by Poisson summation. The final step is to relate the resulting main term to an Euler product that is estimated by Mertens' theorem. 

The unmollified fourth moment was treated by starting with Waldspurger's formula and an approximate functional equation, after which a large sieve inequality of Deshouillers and Iwaniec was used to estimate the double $(g,k)$-sum. The final step was to average over the fundamental discriminants trivially. With a mollifier this strategy does not work and we need to proceed differently. However, the first steps are identical: by Waldspurger's formula we need to estimate the sum
\[ 
\sumflat_{d}\phi\left(\frac{|d|}X\right)\sum_{k\in\Z}h\left(\frac{2k-1}K\right)\sum_{f\in B_k}\omega_f L\left(\frac12,f\otimes\chi_d\right)^2M_f(d)^4.
\]
Using an approximate functional equation we express the central $L$-value as essentially a finite Dirichlet series. After expanding the definition of $M_f(d)$ we execute the sum over $B_k$ using the Petersson formula. This splits the sum into two parts, diagonal and off-diagonal, in a natural way. The diagonal term can be written as an expectation value
\begin{align}\label{sum-intro}
\sumflat_{d}\phi\left(\frac{|d|}X\right)\sum_{k\in\Z}h\left(\frac{2k-1}K\right)\mathbb E(L(X;d,k)M_2(X;d)),
\end{align}
where $L(X;d,k)$ is a random $L$-function built from random variables $X$, which model the behaviour of Hecke eigenvalues $\lambda_f(m)$. Similarly, $M_2(X;d)$ is a random mollifier modelling the behaviour of $M_f(d)^4$. Now, using Mellin inversion we may express (\ref{sum-intro}) as a double line integral
\[ 
\sumflat_{d}\phi\left(\frac{|d|}X\right)\sum_{k\in\Z}h\left(\frac{2k-1}K\right)\mathbb E\left(\frac1{(2\pi i)^2}\int\limits_{(1)}\int\limits_{(1)}\left(\frac{2\pi}{|d|}\right)^{-s_1-s_2}\frac{\Gamma(s_1+k)}{\Gamma(k)}\cdot\frac{\Gamma(s_2+k)}{\Gamma(k)}e^{s_1^2+s_2^2} F(s_1,s_2;X,d)\,\frac{\mathrm d s_1\,\mathrm d s_2}{s_1s_2}\right)
\]
for a certain function $F(s_1,s_2;X,d)$ for which $\mathbb E(F(s_1,s_2;X,d))$ has poles at $s_2=0$ and $s_2=-s_1$. It will turn out that the contribution of the latter pole cancels the main term of the off-diagonal from the initial application of the Petersson formula. This gives a new instance of a similar phenomenon appearing in some prior works \cite{Blomer2004, Blomer2008, Khan2010}. The computations here have similarities with the arguments of Khan \cite{Khan2010} and we are able to use some of the technical results he proved. However, there are also features that are not present in \cite{Khan2010}. In particular, the main term  from the off-diagonal arises in a very different manner compared to Khan's work. 

The pole at $s_2=0$ gives the dominant contribution, which turns out to be $\ll XK$ after some computations with Euler products. Here the presence of random variables greatly simplifies the computations, especially allowing us to control the effect of "small" primes in the mollifier using some ideas from \cite{Bui-Evans-Lester-Pratt2022}. Back-of-the-envelope calculation also suggests that the off-diagonal from the Petersson formula can be evaluated using arguments of Balkanova and Frolenkov \cite{Balkanova-Frolenkov2021}. This is based on the observation that complicated special functions that arise from an application of the (weighted) Petersson formula are solutions of certain second-order differential equations, and therefore their behaviour can be modelled by using the WKB-approximation that is widely used for example in quantum mechanics.

\subsection{Organisation of the article}

This paper is organised as follows. In Section $4$ we describe the mollifier $M_g(d)$ used in this work. In the following section we gather basic facts about half-integral weight modular forms and other auxiliary results we need. In Section $6$ we introduce a random model for the Hecke eigenvalues and construct a random mollifier from these random variables. In Section $7$ we evaluate a character sum that arises in the computation of the fourth moment of the Fourier coefficients. The second mollified moment is estimated in Section $8$. The estimation of the fourth moment occupies the next three sections. Finally the main result, Theorem \ref{Main-theorem}, is proved in Section $12$.

\subsection{Acknowledgements}

\noindent This work was supported by the Finnish Cultural Foundation. The author is grateful to Kaisa Matom\"aki for interesting discussions, and to Steve Lester for encouragement and suggesting that the methods of \cite{Bui-Evans-Lester-Pratt2022} might be useful when computing the mollified fourth moment of the Fourier coefficients. 

\section{Notations}

\noindent We use standard asymptotic notation. If $f$ and $g$ are complex-valued functions defined on some set, say $\mathcal G$, then we write $f\ll g$ to signify that $|f(x)|\leqslant C|g(x)|$ for all $x\in\mathcal G$ for some implicit constant $C\in\mathbb R_+$. The notation $O(g)$ denotes a quantity that is $\ll g$, and $f\asymp g$ means that both $f\ll g$ and $g\ll f$. We write $f=o(g)$ if $g$ never vanishes in $\mathcal G$ and $f(x)/g(x)\longrightarrow 0$ as $x\longrightarrow\infty$. Moreover, we write\footnote{This should not be confused with the notation $\ell\sim L$ explained in footnote $2$.} $f\sim g$ if $f(x)/g(x)\longrightarrow 1$ as $x\longrightarrow\infty$. The letter $\varepsilon$ denotes a positive real number, whose value can be fixed to be arbitrarily small, and whose value can be different in different instances in a proof.  All implicit constants are allowed to depend on $\varepsilon$, on the implicit constants appearing in the assumptions of theorem statements, and on anything that has been fixed. When necessary, we will use subscripts $\ll_{\alpha,\beta,...},O_{\alpha,\beta,...}$, etc. to indicate when implicit constants are allowed to depend on quantities $\alpha,\beta,...$

We define $\chi_d(\cdot):=\left(\frac d\cdot\right)$, the Kronecker symbol, for all non-zero odd integers $d$. 
Let us also write $1_{m=n}$ for the characteristic function of the event $m=n$ and $1_{[K,2K]}(x)$ for the characteristic function for $x\in[K,2K]$. Furthermore, $\Re(s)$ and $\Im(s)$ are the real- and imaginary parts of $s\in\mathbb C$, respectively, and occasionally we write $\sigma$ for $\Re(s)$. We write $e(x):=e^{2\pi ix}$. For a compactly supported smooth function $\phi$, we define its Fourier transform $\widehat\phi(y)$ by
\[
\widehat\phi(y):=\int\limits_\mathbb R \phi(x)e(-xy)\,\mathrm d x.
\]
 
The sum $\sum_{a\,(c)}^*$ means that the summation is over residue classes coprime to the modulus. We also write $\tau(\chi)$ for the quadratic Gauss sum associated to a character $\chi$. Given coprime integers $a$ and $c$, we write $\overline a\,(\text{mod } c)$ for the multiplicative inverse of $a$ modulo $c$. As usual, $\Gamma$ denotes the Gamma function and $\mu$ denotes the M\"obius function. In addition, $d(n)$ is the usual divisor function, $\sigma_k(n)$ is the generalised divisor function, and $\varphi(n)$ is Euler's totient function. The notation $n=\square$ means that a natural number $n$ is a perfect square. We also write $[a,b]$ for the least common multiple of two natural numbers $a$ and $b$, this should not be confused with a notation $[a,b]$ used for a real interval. The notation $p^\alpha\|n$ means that $p^\alpha|n$, but $p^{\alpha+1}\nmid n$. Of course $\zeta$ denotes the Riemann zeta function and we write its local factors as $\zeta_p(s):=(1-p^{-s})^{-1}$ so that $\zeta(s)=\prod_p\zeta_p(s)$ for $\Re(s)>1$. We also write $\text{Tr}(A)$ for the trace of a matrix $A$, $\text{rad}(n)$ for the radical of $n\in\N$, $|\mathcal X|$ for the measure of a set $\mathcal X$, and $\mathbb E(X)$ for the expectation value of a random variable $X$. Finally, $\sumflat\,\,$ means that we are summing over all odd fundamental discriminants and $\text{sgn}(d)$ denotes the sign of $d$. 
  
\section{Choosing the mollifier}

\subsection{Heuristics}

\noindent Let $d$ denote a fundamental discriminant, and $\chi_d(\cdot)=\left(\frac d\cdot\right)$ denote the primitive quadratic character of conductor $|d|$. Let $f\otimes\chi_d$ denote the twist of $f\in B_{k}$ by the character $\chi_d$, and let $L(s,f\otimes\chi_d)$ denote the twisted $L$-function
\[ 
L(s,f\otimes\chi_d):=\sum_{m=1}^\infty\frac{\lambda_f(m)\chi_d(m)}{m^s}=\prod_p L_p(s,f\otimes\chi_d)\]
for $\Re(s)>1$. Here 
\[ 
L_p(s,f\otimes\chi_d):=\left(1-\frac{\lambda_f(p)\chi_d(p)}{p^s}+\frac{\chi_d(p)^2}{p^{2s}}\right)^{-1}.
\]
\noindent The Dirichlet series $L(s,f\otimes\chi_d)$ extends analytically to the entire complex plane, and satisfies the functional equation
\[\Lambda(s,f\otimes\chi_d)=i^{2k}\varepsilon(d)\Lambda(1-s,f\otimes\chi_d),
\]
where
\[
\Lambda(s,f\otimes\chi_d):=\left(\frac{|d|}{2\pi}\right)^s\Gamma\left(s+\frac{2k-1}2\right)L(s,f\otimes\chi_d),
\]
and $\varepsilon(d)=\left(\frac d{-1}\right)$ is $1$ or $-1$ depending on whether $d$ is positive or negative.  

By Waldspurger's formula the absolute square of $c_g(|d|)$ is proportional to the central $L$-value $L(1/2,f\otimes \chi_d)$ when $(-1)^kd>0$, where $f\in H_{2k}$ is a classical holomorphic cusp form attached to $g\in S_{k+\frac 12}^+(4)$ via the Shimura correspondence. This motivates us to choose the mollifier $M_g(d)$ so that it behaves like $L(1/2,f\otimes\chi_d)^{-1/2}$, at least for typical $g$ and $d$.
 
For $|d|\gg k^\eps$ we expect that $\log L(1/2,f\otimes\chi_d)$ to be approximated by
\[ 
\sum_{p^\ell< |d|^\eps}\frac{\left(\alpha_p^\ell+\beta_p^\ell\right)\chi_d(p)^\ell}{\ell p^{\ell/2}},\]
where $\alpha_p,\beta_p$ are the Satake parameters of $f$ at $p$. Recalling that $\alpha_p+\beta_p=\lambda_f(p)$ and $\alpha_p^2+\beta_p^2=\lambda_f(p^2)-1$ it is expected (and can be shown under GRH) that the sum above is
\[ 
\sum_{p<|d|^\eps}\frac{\lambda_f(p)\chi_d(p)}{\sqrt p}-\frac12\log\log|d|\left(1+o(1)\right).\]
Thus heuristically we expect (unconditionally) that
\begin{align}\label{L-function-approximation}
L\left(\frac12,f\otimes\chi_d\right)\approx(\log |d|)^{-1/2}\exp\left(\sum_{p<|d|^\eps}\frac{\lambda_f(p)\chi_d(p)}{\sqrt p}\right).
\end{align} 
Here the approximation sign $\approx$ should not be taken too literally. Recall that we wish to choose the mollifier so that it approximates $L(1/2,f\otimes\chi_d)^{-1/2}$. By the discussion above a natural choice for the mollifier would be, for suitable $x$ depending on $X$ when $d\sim X$,
\[ 
M_g(d)=(\log x)^{1/4}\exp(P(f,d)),\]
where 
\[
P(f,d):=-\frac12\sum_{p\leq x}\frac{\lambda_f(p)\chi_d(p)}{\sqrt p}.
\] 
In order for $M_g(d)$ to be a good approximation for $L(1/2,f\otimes\chi_d)^{-1/2}$ one wishes to choose $x\asymp X^\eps$ for some fixed $\eps>0$. However, with this choice one faces a technical problem that expanding the exponential factor into a Taylor series leads to a long Dirichlet polynomial, which in turn makes it impossible to facilitate certain necessary computations. On the other hand, for rather small $x$, say $x\asymp\log X$, the resulting Dirichlet polynomial has a short length, but in this case we do not expect $M_g(d)$ to be a good approximation for $L(1/2,f\otimes\chi_d)^{-1/2}$.

The idea that allows us to take $x$ to be a small power of $X$ is to use an iterative scheme of Radziwi{\l\l} and Soundararajan \cite{Radziwill-Soundararajan2015}, which is partly motivated by the Brun--Hooley sieve. Heuristically we expect that $P(f,d)$ has a Gaussian limiting distribution over $d\sim x$ with mean zero and variance $\sim\log\log x$. We consider disjoint sets of primes in the intervals $I_0,I_1,...,I_J$ with $I_j=[K^{\theta_{j-1}},K^{\theta_j}[$ for $j=1,...,J$, and $I_0:=]c,K^{\theta_0}]$, where $c>2$ is fixed and chosen to be sufficiently large. Due to the expected Gaussian distribution, choosing $\theta_j$'s appropriately, we anticipate that each of the terms
\[ 
\exp\left(-\frac12\sum_{p\in I_j}\frac{\lambda_f(p)\chi_d(p)}{\sqrt p}\right)
\]
should typically be approximated by a short Dirichlet polynomial depending on the size of $\theta_j$ as the sum inside the argument is expected to be small in absolute value for most $d$. We take $J$ to be so large that $\theta_J\geq\eta_2$ for some fixed $\eta_2>0$, meaning that $J\asymp\log\log\log K$. This leads one to expect that for a typical $d\sim X$ we should have that
\[ 
\exp\left(-\frac12\sum_{c<p\leq X^\eps}\frac{\lambda_f(p)\chi_d(p)}{\sqrt p}\right)\approx\prod_{j=0}^J\exp\left(-\frac12\sum_{p\in I_j}\frac{\lambda_f(p)\chi_d(p)}{\sqrt p}\right)
\] 
can be approximated by a short Dirichlet polynomial. Due to technical reasons it is better to replace $\lambda_f(m)$ with the completely multiplicative function $a_f(m)$ defined by $a_f(p):=\lambda_f(p)$ at the primes. The key observation is that using the Taylor expansion\footnote{More precisely, in the form saying that $e^z=(1+O(e^{-9V}))\sum_{0\leq j\leq 10V}z^j/j!$ for $|z|\leq V$.} and multinomial theorem we have
\begin{align*}
\exp\left(-\frac12\sum_{p\in I_j}\frac{a_f(p)\chi_d(p)}{\sqrt p}\right)&\approx \sum_{0\leq \ell\leq 10\ell_j}\frac1{\ell!}\left(-\frac12\sum_{p\in I_j}\frac{a_f(p)\chi_d(p)}{\sqrt p}\right)^\ell\\
&=\sum_{\substack{p|n\Rightarrow p\in I_j\\
\Omega(n)\leq 10\ell_j}}\frac{\lambda(n)a_f(n)\chi_d(n)\nu(n)}{2^{\Omega(n)}\sqrt n}
\end{align*}
if 
\[ 
\left|\sum_{p\in I_j}\frac{a_f(p)\chi_d(p)}{\sqrt p}\right|\leq\ell_j.
\]
Here $\Omega(n)$ is the number of prime divisors of a positive integer $n$ (counted with multiplicity), $\nu$ is a multiplicative function such that $\nu(p^\alpha)=1/\alpha !$, and $\lambda(n):=(-1)^{\Omega(n)}$ is the Liouville function.

A concrete choice of parameters that suffices for us is the same one as in \cite{Lester-Radziwill2021} (see also \cite{Bui-Evans-Lester-Pratt2022} for a similar choice in a slightly different context), namely
\begin{align*}
\theta_j:=\eta_1\frac{e^j}{(\log\log K)^5} \qquad \text{and }\qquad \ell_j:=2\lfloor \theta_j^{-3/4}\rfloor,
\end{align*}
where $\eta_1>0$ is a sufficiently large absolute constant. We choose $J$ so that $\eta_2\leq\theta_J\leq e\eta_2$ for sufficiently small constant $\eta_2>0$. 

The discussion above motivates us to choose our mollifier as
\begin{align}\label{mollifier}
M_g(d):=(\log K)^{1/4}\prod_{j=0}^J M_g(d;j), 
\end{align}
where 
\[ 
M_g(d;j):=\sum_{\substack{p|n\Rightarrow p\in I_j\\
\Omega(n)\leq\ell_j}}\frac{a_f(n)\lambda(n)\nu(n)\chi_d(n)}{2^{\Omega(n)}\sqrt n}.\]
This mollifier is designed so that it behaves like $L(1/2,f\otimes \chi_d)^{-1/2}$ for typical $f\in B_{k}$ and fundamental discriminant $d$. Note that as $c>2$ it follows that $M_g(d;j)$ is supported only on odd $n$'s. Throughout the paper we set 
\[ 
\delta_0:=\sum_{j=0}^J\ell_j\theta_j
\]
and note that $M_g(d)\ll K^{\delta_0}(\log K)^{1/4}$. Furthermore, $M_g(d)>0$ as noted in Section $4$ of \cite{Lester-Radziwill2021}. By choosing $\eta_2$ to be small enough we can guarantee that $\delta_0$ is sufficiently small for certain estimates later. This will play an important role in many parts of our argument. 

The method used to prove Theorem \ref{Main-theorem} relies crucially on the estimation of certain mollified moments of quadratic twists of modular $L$-functions. The relevant results are the content of the following two propositions, which replace Lemmas 2.7. and 2.8. in \cite{Jaasaari2026}.

\begin{prop}\label{second-moment-with-average}
Let $\phi$ and $h$ be non-negative smooth compactly supported functions on $\mathbb R_+$. Suppose that $X\ll\sqrt K$. Then we have 
\[\sum_{k\in\Z}h\left(\frac{2k-1}K\right)\sum_{g\in B^+_{k+\frac12}}\alpha_g\sumflat_d\, |c_g(|d|)|^2 M_g(d)^2\phi\left(\frac{|d|}X\right)\asymp XK. \]
\end{prop}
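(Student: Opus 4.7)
The approach is to evaluate the mollified second moment asymptotically and show its main term has size $\asymp XK$. By Waldspurger's formula together with the normalization \eqref{Normalisation}, the product $\alpha_g|c_g(|d|)|^2$ equals a constant multiple of $L(1/2, f \otimes \chi_d)$ up to an explicit factor depending only on $d$ and $k$, where $f \in B_k$ is the Shimura lift of $g \in B^+_{k+1/2}$. This reduces the proposition to a lower bound for
\[
\sumflat_d \phi\!\left(\frac{|d|}X\right)\sum_k h\!\left(\frac{2k-1}K\right)\sum_{f\in B_k}\omega_f\, L\!\left(\tfrac12, f\otimes \chi_d\right) M_f(d)^2.
\]

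Next, I apply an approximate functional equation to write the central value as a smoothed Dirichlet series of length $\asymp\sqrt{Xk}$ in a variable $m$, with factor $\lambda_f(m)\chi_d(m)/\sqrt m$, and expand $M_f(d)^2 = (\log K)^{1/2}\prod_j M_f(d;j)^2$ as a short Dirichlet polynomial of length $K^{O(\delta_0)}$ in a variable $n$ supported on integers whose prime factors lie in $\bigcup_j I_j$ with the prescribed $\Omega$-constraints. Exchanging the order of summation, the inner sum becomes $\sum_{f\in B_k}\omega_f \lambda_f(m)a_f(n)$, to which I apply the Petersson trace formula. The off-diagonal contribution, involving Kloosterman sums against $J_{2k-1}(4\pi\sqrt{mn}/c)$, is negligible under the hypothesis $X \ll \sqrt K$: indeed $mn \ll \sqrt{Xk}\cdot K^{O(\delta_0)}$ lies well below $k^2$, making the Bessel factor tiny, with further savings coming from the $k$-average via the usual contour-shift / Mellin argument.

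From the diagonal term I execute the $d$-sum against $\chi_d(mn)$ by Poisson summation over odd fundamental discriminants. The zero-frequency contribution is supported on $mn$ a perfect square and supplies the main term, which factors as an Euler product over primes in the mollifier support. Standard local computations show each factor contributes $\approx 1 - 1/(2p)$ from the diagonal at $p^2$, so by Mertens' theorem the entire product is $\asymp (\log K)^{-1/2}$. This precisely cancels the $(\log K)^{1/2}$ built into $M_g(d)^2$, and together with the natural volumes $\asymp K$ from the $k$-sum and $\asymp X$ from the $d$-sum yields the claimed lower bound $\gg XK$.

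The principal technical obstacle is justifying the Taylor truncation built into $M_f(d;j)$: one must verify that the exponential $\exp(-\tfrac12\sum_{p\in I_j}a_f(p)\chi_d(p)/\sqrt p)$ agrees with its truncation $M_f(d;j)$ for $(f,d)$ outside a negligible exceptional set, so that the heuristic $M_g(d) \approx L(1/2, f \otimes \chi_d)^{-1/2}$ survives. Following the Radziwi{\l\l}--Soundararajan iterative scheme \cite{Radziwill-Soundararajan2015}, this reduces to showing that the set of pairs $(f,d)$ on which some partial sum $|\sum_{p\in I_j}a_f(p)\chi_d(p)/\sqrt p|$ exceeds $\ell_j$ contributes negligibly to the second moment; this is controlled by high-moment bounds for short character sums against Hecke eigenvalues, analogous to arguments in \cite{Lester-Radziwill2021, Bui-Evans-Lester-Pratt2022}.
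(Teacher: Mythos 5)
Your first three paragraphs reproduce the paper's proof essentially step for step: Waldspurger plus the expansion \eqref{Mollifier-power}, the approximate functional equation, Petersson with the off-diagonal killed by the Weil and Bessel bounds (using $mn\ll K^{1+2\theta_J+\eps}X$ and $X\ll\sqrt K$), Poisson summation in $d$ restricting to $nu=\square$, and a Mertens evaluation of the resulting Euler product giving $\gg(\log K)^{-1/2}$, which cancels the $(\log K)^{1/2}$ in the mollifier. That is the argument of Section 8, and it is correct; the only small imprecision is that the local factor $1-\tfrac1{2p}+O(p^{-3/2})$ arises from combining the $n=p$ term (with $u=p$) and the $n=p^2$ terms, not from ``the diagonal at $p^2$'' alone.

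Your final paragraph, however, identifies a ``principal technical obstacle'' that does not exist for this proposition. The mollifier $M_g(d)$ is, by definition, the explicit truncated Dirichlet polynomial in \eqref{mollifier}; the relation to $\exp\bigl(-\tfrac12\sum_{p}a_f(p)\chi_d(p)/\sqrt p\bigr)$ and to $L(1/2,f\otimes\chi_d)^{-1/2}$ is purely heuristic motivation for the choice of coefficients and is never invoked in the proof. The lower bound $\gg XK$ follows directly from the diagonal main term being a positive Euler product of size $\asymp(\log K)^{-1/2}$ (the $\Omega(n)\le 2\ell_j$ constraints are removed by Rankin's trick at a cost $O(4^{-\ell_j})$ per block, not by an exceptional-set argument), so no high-moment bounds for $\sum_{p\in I_j}a_f(p)\chi_d(p)/\sqrt p$ and no control of pairs $(f,d)$ where the Taylor truncation fails are needed. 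If you regard that verification as a prerequisite and leave it unexecuted, your write-up is incomplete by its own standard; the fix is simply to delete the requirement and note that the moment is computed exactly for the polynomial mollifier as defined.
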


\begin{prop}\label{fourth-moment-with-average}
Let $\phi$ and $h$ be non-negative smooth compactly supported functions on $\mathbb R_+$. Suppose that $X\ll\sqrt K$. Then we have
\begin{align}\label{Fourth-moment}
&\sum_{k\in\Z}h\left(\frac{2k-1}K\right)\sum_{g\in B^+_{k+\frac12}}\alpha_g^2\omega_g^{-1}\sumflat_{d}\, |c_g(|d|)|^4 M_g(d)^4\phi\left(\frac{|d|}X\right)\ll XK. 
\end{align}
\end{prop}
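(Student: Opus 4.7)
The plan is to implement the strategy outlined in Section~2. The starting point is Waldspurger's formula, which (up to a normalising constant and a harmless arithmetic factor) identifies $\alpha_g |c_g(|d|)|^2$ with $L(1/2,f\otimes\chi_d)$, where $f\in H_{2k}$ is the Shimura lift of $g\in B^+_{k+1/2}$. This reduces the bound \eqref{Fourth-moment} to estimating
\[
\sumflat_d \phi\!\left(\frac{|d|}{X}\right)\sum_{k\in\Z} h\!\left(\frac{2k-1}{K}\right)\sum_{f\in B_k} \omega_f\, L\!\left(\tfrac12,f\otimes\chi_d\right)^2 M_f(d)^4.
\]
An approximate functional equation for $L(1/2,f\otimes\chi_d)^2$ then expresses it as a smoothly truncated Dirichlet series of effective length $\asymp |d|k$ in variables $m_1,m_2$. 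Expanding $M_f(d)^4$ as a sum over $(J+1)$-tuples $(n_0,\dots,n_J)$ with $n_j$ supported on primes in $I_j$ and $\Omega(n_j)\leq 4\ell_j$, and using Hecke multiplicativity to linearise $\lambda_f(m_1)\lambda_f(m_2)\prod_j a_f(n_j)$, reduces the $f$-sum to sums of individual $\lambda_f(N)$.

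I would then execute the $f$-sum via the Petersson trace formula, splitting the expression into a diagonal contribution (from $\delta_{m=n}$) and an off-diagonal contribution (from the Kloosterman--Bessel piece). After handling the $d$-sum via the character sum evaluation of Section~7 and the $k$-sum via Poisson/Stirling, the diagonal part can be recognised through Mellin inversion as a double Barnes-type integral
\[
\frac{1}{(2\pi i)^2}\iint_{(1)(1)} \left(\frac{2\pi}{d}\right)^{-s_1-s_2}\frac{\Gamma(s_1+k)\Gamma(s_2+k)}{\Gamma(k)^2}\, e^{s_1^2+s_2^2}\,\mathbb E\bigl(F(s_1,s_2;X,d)\bigr)\,\frac{\d s_1\,\d s_2}{s_1 s_2},
\]
where $F(s_1,s_2;X,d)$ is built from the random Euler factors of the random model for $L(1/2,f\otimes\chi_d)^2$ and the random version of $M_f(d)^4$ constructed in Section~6. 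The function $\mathbb E(F)$ has two relevant poles, at $s_2=0$ and at $s_2=-s_1$.

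The crucial point is a cancellation phenomenon. Shifting contours past the pole at $s_2=-s_1$ produces a residue whose explicit Euler-product form turns out, by a computation in the spirit of \cite{Blomer2004,Khan2010,Zenz}, to be exactly the negative of the off-diagonal Petersson contribution, so the two cancel. The remaining residue at $s_2=0$ (and its symmetric counterpart at $s_1=0$) yields the dominant contribution. After assembling the resulting Euler factors, and exploiting the design feature that $M_g(d)^4$ approximates $L(1/2,f\otimes\chi_d)^{-2}$ locally on each interval $I_j$, this contribution reduces by Mertens' theorem and the techniques of \cite{Bui-Evans-Lester-Pratt2022} to a quantity of size $\asymp (\log K)^{-1}$. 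Combined with the factor $\log K$ coming from the four copies of $(\log K)^{1/4}$ in $M_g(d)^4$ and the volume $\asymp XK$ of the outer $(d,k)$-sum, this produces the desired bound $\ll XK$.

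The main obstacle will be the precise matching in the cancellation step. The off-diagonal Petersson term involves Kloosterman sums twisted by $\chi_d$ together with Bessel transforms that must be analysed uniformly for $k\sim K$, and showing that it equals, up to acceptable errors, the residue at $s_2=-s_1$ demands a careful identification of Euler factors across all the intervals $I_0,\dots,I_J$ in the mollifier. Here the random model is essential, because it lets each $I_j$ be handled in a product form, with the truncation $\Omega(n_j)\leq \ell_j$ absorbed into a tail estimate driven by the expected Gaussian concentration of $\sum_{p\in I_j} a_f(p)\chi_d(p)/\sqrt p$. A secondary, but non-trivial, difficulty is the uniform control of the gamma ratio $\Gamma(s_i+k)/\Gamma(k)$ when shifting contours: the hypothesis $X\ll\sqrt K$ enters precisely when pushing the lines of integration past $\Re(s_i)=0$, since there $(2\pi/d)^{-s_i}$ must beat the polynomial-in-$k$ growth of the gamma ratio.
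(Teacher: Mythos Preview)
Your outline is essentially the paper's own argument: Waldspurger plus the approximate functional equation, Petersson in the $f$-variable, identification of the diagonal with the random model via Mellin inversion, cancellation of the off-diagonal against the residue at $s_2=-s_1$, and extraction of the main term from the pole at $s_2=0$ followed by a Mertens estimate. Two small corrections are in order. First, the character sum of Section~7 is applied to the \emph{off-diagonal} (it evaluates a sum of Kloosterman sums twisted by $\chi_d$), not to the diagonal; it is precisely this evaluation that puts the off-diagonal into a form matching the $s_2=-s_1$ residue. Second, your logarithm bookkeeping is off by one power: the residue at $s_2=0$ is a \emph{double} pole after the subsequent shift in $s_1$ (from $1/s_1$ and $\zeta(1+s_1)$), contributing an extra factor $\sim\log K$, while the resulting local factors are $1-2/p+O(p^{-3/2})$, so Mertens gives $(\log K)^{-2}$ rather than $(\log K)^{-1}$; the net effect is still $\ll XK$. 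Finally, the hypothesis $X\ll\sqrt K$ is used chiefly to control the off-diagonal ranges (forcing $v,\eta,c$ to be essentially bounded after Poisson and stationary phase), not in the contour shifts.
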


\noindent In the latter proposition and throughout the paper we write as an abuse of notation $\omega_g$ for $\omega_f$ when $f$ is the Shimura lift of $g$. 

Given these the proof of Theorem \ref{Main-theorem} can be completed similarly as in \cite{Jaasaari2026}. This deduction is achieved in Section $12$. 

\subsection{Evaluating the mollified fourth moment}

\noindent The proof of Proposition \ref{fourth-moment-with-average} warrants some comments. But before that, one easily observes using the complete multiplicativity that for any $\ell\in\mathbb N$,
\begin{align}\label{Mollifier-power}
M_g(d)^{2\ell}=(\log K)^{\ell/2}\sum_{n\leq K^{2\ell\delta_0}}\frac{h_\ell(n)a_f(n)\lambda(n)\chi_d(n)}{2^{\Omega(n)}\sqrt n}.
\end{align}
Here
\[h_\ell(n):=\sum_{\substack{n_0\cdots n_J=n \\
p|n_j\Rightarrow p\in I_j\,\forall\, 0\leq j\leq J\\
\Omega(n_j)\leq 2\ell\ell_j\,\forall\, 0\leq j\leq J}}\nu_{2\ell}(n_0;\ell_0)\cdots\nu_{2\ell}(n_J;\ell_J) \]
and 
\[
\nu_r(n;\ell):=\sum_{\substack{n_1\cdots n_r=n \\
\Omega(n_j)\leq \ell\,\forall\, 1\leq j\leq r }}\nu(n_1)\cdots\nu(n_r).
\]
Let $\nu_j(n):=(\nu*\cdots*\nu)(n)$ denote the $j$-fold convolution of $\nu$. Note that $\nu_r(n;\ell)\leq\nu_r(n)$ and that if $\Omega(n)\leq\ell$, then $\nu_r(n;\ell)=\nu_r(n)$. 

After opening $M_g(d)^4$ using (\ref{Mollifier-power}) and applying Waldspurger's formula to express $|c_g(|d|)|^2$ as a central value of an $L$-function we see that in order to prove Proposition \ref{fourth-moment-with-average} we would require an asymptotic evaluation of the multiple average
\[ 
\sum_{k\in\Z}h\left(\frac{2k-1}K\right)\sum_{f\in B_k}\omega_f\sumflat_d\, L\left(\frac12,f\otimes\chi_d\right)^2 a_f(r)\chi_d(r)\phi\left(\frac{|d|}X\right)
\]
for $r\leq K^{4\delta_0}$.

This can be done, but applying the resulting formula to the left-hand side of (\ref{Fourth-moment}) results in an unwieldy expression for the main term, which is hard to evaluate. Instead we introduce a random $L$-function in which the Hecke eigenvalues $\lambda_f(m)$ are modelled by random variables $X(m)$ (defined shortly), and then match our expression with the random analogue. Comparison with a random model allows us to sidestep a number of technical points that would otherwise require involved effort to resolve. In particular, using the independence of the random variables $\{X(p)\}_p$ at the primes reduces many of the computations to "local" ones evaluated at each prime.
 
\section{Preliminaries}

\subsection{Half-integral weight forms}
 
Let $S_{k+\frac12}(4)$ denote the space of holomorphic cusp forms of weight $k+\frac12$ for the Hecke congruence group $\Gamma_0(4)$. Any such form $g$ has a Fourier expansion of the form
\begin{align}\label{Fourier-exp}
g(z)=\sum_{n=1}^\infty c_g(n)n^{\frac k2-\frac14}e(nz),
\end{align}
where $c_g(n)$ are the Fourier coefficients of $g$.

For $g,h\in S_{k+\frac12}(4)$, we define the Petersson inner product $\langle g,h\rangle$ to be
\begin{align}\label{Inner_product}
\langle g,h\rangle:=\int\limits_{\Gamma_0(4)\backslash\mathbb H}g(z)\overline{ h(z)}y^{k+\frac12}\frac{\mathrm d x\,\mathrm d y}{y^2}.
\end{align}

\noindent For any odd prime $p$ there exists a Hecke operator $T(p^2)$ acting on the space of half-integral weight modular forms. We call a half-integral weight cusp form a Hecke cusp form if $T(p^2)g=\gamma_g(p)g$ for all $p>2$ for some $\gamma_g(p)\in\mathbb C$. 

The Kohnen plus subspace $S_{k+\frac12}^+(4)\subset S_{k+\frac12}(4)$ consists of all weight $k+\frac12$ Hecke cusp forms whose $n^{\text{th}}$ Fourier coefficient vanishes whenever $(-1)^k n\equiv 2,3\,(\text{mod }4)$. This space has a basis consisting of simultaneous eigenfunctions of $T(p^2)$ for odd $p$. It is well-known that, as $k\longrightarrow\infty$, asymptotically one third of half-integral weight cusp forms lie in the Kohnen plus subspace. 

Kohnen proved \cite{Kohnen1982} that there exists a Hecke algebra isomorphism between $S^+_{k+\frac12}(4)$ and the space of level $1$ cusp forms of weight $2k$. That is, $S^+_{k+\frac12}(4)\simeq S_{2k}$ as Hecke modules. Also recall that every Hecke cusp form $g\in S^+_{k+\frac12}(4)$ can be normalised so that it has real Fourier coefficients and throughout the article we assume that $g$ has been normalised in this way.

The proof of our main result uses the explicit form of Waldspurger's formula due to Kohnen and Zagier \cite{Kohnen-Zagier1981}.

\begin{lemma} 
For a Hecke cusp form $g\in S^+_{k+\frac12}(4)$ we have
\begin{align}\label{Waldspurger-rel}
|c_g(|d|)|^2=L\left(\frac12,f\otimes\chi_d\right)\cdot\frac{(k-1)!}{\pi^k}\cdot\frac{\langle g,g\rangle}{\langle f,f\rangle}
\end{align}
for each fundamental discriminant $d$ with $(-1)^kd>0$, where $f$ is a holomorphic modular form attached to $g$ via the Shimura correspondence, normalised so that $\lambda_f(1)=1$. 
\end{lemma}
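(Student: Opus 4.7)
The plan is to prove the Kohnen--Zagier identity by constructing an explicit kernel function on $\mathbb H\times\mathbb H$ that simultaneously realises the Shimura correspondence and the Rankin--Selberg integral representation for the twisted $L$-value, and then computing its Petersson inner product against $g$ and $f$ in two different ways. Concretely, I would introduce a holomorphic kernel $\Omega_{k,d}(\tau,z)$, of weight $2k$ in $\tau$ and weight $k+\frac12$ in $z$, lying in $H_{2k}$ and $S_{k+\frac12}^+(4)$ respectively. The natural candidate is (a Kohnen-plus projection of) the theta series attached to binary quadratic forms of discriminant $4|d|N$, summed against an appropriate Gegenbauer polynomial, or equivalently a suitable half-integral-weight Poincar\'e series indexed by $|d|$. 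The support condition $(-1)^k d>0$ enters precisely to guarantee that this kernel is nontrivial and lands in the plus space.

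The next step is to compute two Petersson inner products of $\Omega_{k,d}$. First, pairing in the $\tau$-variable against $f$ and using a Rankin--Selberg unfolding (the indefinite theta series unfolds onto a product $\lambda_f(n)\chi_d(n)n^{-s}$-series whose value at $s=\frac12$ is $L(\tfrac12,f\otimes\chi_d)$), one shows
\[
\bigl\langle \Omega_{k,d}(\cdot,z),\,f\bigr\rangle_\tau \;=\; C_k\cdot L\!\left(\tfrac12,f\otimes\chi_d\right)\cdot g(z),
\]
where $C_k$ is an explicit constant involving $\Gamma$-factors and powers of $\pi$ arising from the Mellin transform of the weight-$k$ exponential. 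Second, expanding the $z$-Fourier expansion of $\Omega_{k,d}$ and pairing against $g$ in the $z$-variable, one reads off
\[
\bigl\langle g(\cdot),\,\Omega_{k,d}(\tau,\cdot)\bigr\rangle_z \;=\; C_k'\cdot\overline{c_g(|d|)}\cdot f(\tau),
\]
with $C_k'$ another explicit constant, essentially because the $|d|$-th Fourier coefficient of the kernel is designed to reproduce the Shimura lift.

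Combining these two computations is then routine: substitute the first expression into the Petersson pairing of the kernel against $f\otimes \overline{g}$ and compare with the outcome of using the second expression. The $g$'s cancel against $\|g\|_2^{2}$ on one side, the $f$'s against $\|f\|_2^{2}$ on the other, and rearranging gives
\[
|c_g(|d|)|^2 \;=\; \frac{C_k}{C_k'}\cdot L\!\left(\tfrac12,f\otimes\chi_d\right)\cdot\frac{\langle g,g\rangle}{\langle f,f\rangle}.
\]
The remaining task is to show that $C_k/C_k'$ equals $(k-1)!/\pi^k$.

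The main obstacle is precisely this normalisation constant. Tracking it demands careful bookkeeping of three separate contributions: the volume factor in \eqref{Inner_product}, the $\Gamma(k)/(4\pi)^{k}$-type factor from the Rankin--Selberg Mellin transform in weight $2k$, and the combinatorial constant from the Kohnen-plus projector (which is where the factor distinguishing the plus space from the full space $S_{k+\frac12}(4)$ appears). One standard way to fix the constant cleanly is to specialise to a test case where both sides can be computed independently --- for instance a weight in which $\dim S_{k+\frac12}^+(4)=1$ so that $g$ is essentially unique and its first few coefficients can be written down explicitly --- and then invoke linearity and the Hecke-equivariance of the construction to propagate the formula to all $k$. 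This specialisation trick, essentially the strategy of \cite{Kohnen-Zagier1981}, is what ultimately pins down the clean constant $(k-1)!/\pi^k$ appearing in \eqref{Waldspurger-rel}.
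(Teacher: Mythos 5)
The paper offers no proof of this lemma: it is imported verbatim as the Kohnen--Zagier explicit form of Waldspurger's theorem \cite{Kohnen-Zagier1981}, so there is no in-paper argument to compare yours against. Your sketch is a fair reconstruction of the strategy of that original paper --- a two-variable kernel realising the Shimura correspondence, paired against $f$ in one variable to produce $L\left(\tfrac12,f\otimes\chi_d\right)g(z)$ and against $g$ in the other to produce (a multiple of) $c_g(|d|)f(\tau)$, with the identity obtained by comparing the two --- and at that level of resolution it is the right idea.

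As a proof, however, it has genuine gaps. The two displayed inner-product identities are each substantial theorems rather than routine computations: the first bundles together the unfolding that produces the twisted $L$-value at the central point and the identification of the output with the Shimura lift $g$ (which needs the Hecke-equivariance of the kernel plus multiplicity one in the plus space), while the second requires an explicit Fourier expansion of the kernel in the half-integral-weight variable. More seriously, your proposed mechanism for pinning down the constant --- specialise to a weight where $\dim S^+_{k+\frac12}(4)=1$ and then ``propagate by linearity and Hecke-equivariance'' --- cannot determine a $k$-dependent constant such as $(k-1)!/\pi^k$: linearity and Hecke-equivariance propagate the identity across eigenforms of a \emph{fixed} weight, not across weights, so the specialisation fixes $C_k/C_k'$ only for the particular $k$ you chose. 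In \cite{Kohnen-Zagier1981} the constant is not obtained by specialisation but falls out of explicit $\Gamma$-integral evaluations in the two unfoldings, together with the passage between the classical and the analytic normalisation of the Fourier coefficients (this is where the factor $|d|^{k-\frac12}$ implicit in the normalisation \eqref{Fourier-exp} gets absorbed, a point your sketch does not address). None of this is fatal to the approach, but as written the argument does not yet establish the stated formula.
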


\begin{rem}
Using the normalisation (\ref{Normalisation}) the above formula can be written in the form
\begin{align}\label{W-rel} 
\alpha_g|c_g(|d|)|^2=\omega_fL\left(\frac12,f\otimes\chi_d\right).
\end{align}
\end{rem}

We also remark that $L(1/2,f\otimes\chi_d)$ vanishes when $(-1)^kd<0$ due to the sign in the functional equation. It follows directly from (\ref{Waldspurger-rel}) that $L(1/2,f\otimes\chi_d)\geq 0$ otherwise. Here for $f_1,f_2\in S_{2k}$ the Petersson inner product, which is still denoted by $\langle f_1,f_2\rangle$, is defined to be
\[ 
\langle f_1,f_2\rangle:=\int\limits_{\mathrm{SL}_2(\mathbb Z)\backslash\mathbb H}f_1(z)\overline{f_2(z)}y^{2k}\frac{\mathrm d x\,\mathrm d y}{y^2}.\]

\noindent Using the Hecke relations we have for any $\alpha\in\N$ and prime $p$ that
\[
\lambda_f\left(p\right)^\alpha=\sum_{c=0}^\alpha h_\alpha(c)\lambda_f\left(p^c\right), 
\]
where $h_\alpha(c)$ are non-negative integers given by
\[ 
h_\alpha(c):=\frac2\pi\int\limits_0^\pi(2\cos\theta)^\alpha\sin((c+1)\theta)\sin\theta\,\mathrm d \theta.
\]
Recall that $a_f(n)=\prod_{p^\alpha||n}\lambda_f(p)^\alpha$. By the repeated use of Hecke relations it is easy to see that 
\begin{align}\label{a-function}
a_f(n)=\sum_{u|n}c_n(u)\lambda_f(u),
\end{align}
where 
\[ 
c_n(u):=\prod_{\substack{p^c||u \\
p^\alpha||n}}h_\alpha(c).
\]
Later in the paper we require some specific values of $c_n(u)$ with $n$ (and $u$) powers of a fixed prime. These are easily computed and listed here:
\begin{center}
\begin{tabular}{ | m{3cm} | m{3cm} | m{3cm} | m{3cm} | m{3cm} |} 
  \hline
  $c_1(1)=h_0(0)=1$ & $c_p(1)=h_1(0)=0$ & $c_p(p)=h_1(1)=1$ & $c_{p^2}(1)=h_2(0)=1$ & $c_{p^2}(p)=h_2(1)=0$  \\ 
  \hline
  $c_{p^2}(p^2)=h_2(2)=1$ & $c_{p^3}(1)=h_3(0)=0$ & $c_{p^3}(p)=h_3(1)=2$ & $c_{p^3}(p^2)=h_3(2)=0$ & $c_{p^3}(p^3)=h_3(3)=1$\\ 
  \hline 
  \end{tabular}
  \medskip
Table 1. Certain values of $c_n(u)$.
\end{center}
\noindent We also note the easy bounds $0\leq h_\alpha(c)\leq 2^{\alpha}$ from which we immediately deduce the bounds $0\leq c_n(u)\leq 2^{\Omega(n)}$. 

An approximate functional equation also plays a key role in our work. The following is an easy modification of \cite[Lemma 5]{Radziwill-Soundararajan2015}. Let us set $\varepsilon_{k,d}:=(-1)^k\text{sgn}(d)$, where $\text{sgn}(d)$ denotes the sign of $d$.
\begin{lemma}\label{lem:AFE}
Let $f$ be a Hecke cusp form of weight $2k$ for the full modular group $\mathrm{SL}_2(\mathbb Z)$. Then for any fundamental discriminant $d$ we have that
\begin{align*}
L\left(\frac12,f\otimes\chi_d\right)=(1+\varepsilon_{k,d})\sum_{m=1}^\infty\frac{\lambda_f(m)\chi_d(m)}{\sqrt m}V_k\left(\frac m{|d|}\right),
\end{align*}
where, for any $\sigma>1/2$,
\begin{align}\label{integral-rep-for-weight}
V_k(x):=\frac1{2\pi i}\int\limits_{(\sigma)}\eta(s)x^{-s}e^{s^2}\frac{\mathrm d s}s\quad\text{with}\quad \eta(s):=(2\pi)^{-s}\frac{\Gamma(s+k)}{\Gamma(k)}.
\end{align}
Furthermore, we have that
\begin{align}\label{weight-function-asymptotics}
V_k(\xi)=1+O\left(\left(\frac\xi k\right)^{1/2-o(1)}\right)
\end{align}
as $\xi\longrightarrow 0$.

We also have the estimates
\begin{align}\label{der-bounds}
& V_k(\xi)\ll_A \left(\frac k\xi\right)^A,\nonumber \\
&V_k^{(B)}(\xi)\ll_{A,B}\xi^{-B}\left(\frac k\xi\right)^A
\end{align}
for any $A>0$ and integer $B\geq 0$.
\end{lemma}
\noindent In addition, using Stirling's formula there exists a holomorphic function $R(s,k)$ so that for $\Re(s)\geq -k/2$ we have $R(s,k)\ll |s|^2/k$ and
\begin{align}\label{Stirling}
\frac{\Gamma(s+k)}{\Gamma(k)}=k^s e^{R(s,k)}\left(1+O\left(k^{-1}\right)\right).
\end{align}
In particular, for $|\Re(s)|\leq\sqrt k$ we have
\[ 
\frac{\Gamma(s+k)}{\Gamma(k)}=k^s\left(1+O\left(\frac{|s|^2}k\right)\right).
\]
Note also that $V_k=V_{Ku+1}$ for $u=(k-1)/K$. 

To see that (\ref{Stirling}) holds, note that for $|\text{arg}(s)|\le \pi-\delta$, Stirling's formula gives
\[
\Gamma(s)=\sqrt{\frac{2\pi}{s}} \bigg(\frac{s}{e}\bigg)^s \bigg(1+O\bigg(\frac{1}{|s|}\bigg)\bigg),
\]
where the implied constant depends at most on $\delta$. Hence for $\Re(s) \geq -k/2$ we have that
\[
\begin{split}
\frac{\Gamma(k+s)}{\Gamma(k)}=&\bigg(1+O\bigg( \frac{1}{k} \bigg) \bigg) \sqrt{\frac{k}{k+s}} \bigg(\frac{k}{e} \bigg)^{-k} \bigg(\frac{k+s}{e} \bigg)^{k+s}\\
=&\bigg(1+O\bigg(\frac{1}{k} \bigg) \bigg) k^s e^{-s} \bigg(1+\frac{s}{k} \bigg)^{k+s-\frac12}.
\end{split}
\]
For $\Re(z)>-1/2$, we have that $\log (1+z)=z+O(|z|^2)$. Hence, for $\Re(s)\geq-k/2$ we have that
\[
\begin{split}
\bigg(1+\frac{s}{k} \bigg)^{k+s-\frac12}=&\exp\bigg((k+s-\tfrac12)\bigg(\frac{s}{k}+O\bigg( \frac{|s|^2}{k^2} \bigg) \bigg) \\
=& \exp\bigg(s+O\bigg( \frac{|s|^2}{k^2} \bigg) \bigg).
\end{split}
\]
Thus we conclude that for $\Re(s) \geq -k/2$ we have that
\[
\frac{\Gamma(k+s)}{\Gamma(k)}=k^s\exp\bigg( O\bigg( \frac{|s|^2}{k^2}\bigg)\bigg)\bigg(1+O\bigg(  \frac{1}{k} \bigg)\bigg),
\]
as desired.

\subsection{Basic tools}

\subsubsection{Summation formulas}

One of the most important tools is the Petersson trace formula. 

\begin{lemma}\label{Petersson}
Let $m$ and $n$ be natural numbers, and $k$ be a positive integer. Then
\[\sum_{f\in B_k}\omega_f\lambda_f(m)\lambda_f(n)=1_{m=n}+2\pi i^{2k}\sum_{c=1}^\infty\frac{S(m,n;c)}c J_{2k-1}\left(\frac{4\pi\sqrt{mn}}c\right),\]
where $S(m,n;c)$ is the usual Kloosterman sum and $J_\nu$ is the $J$-Bessel function. 
\end{lemma}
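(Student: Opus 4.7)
The plan is to prove the Petersson trace formula by the classical Poincaré-series method, matching Fourier coefficients on both sides of a spectral expansion.

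First I would introduce the weight $2k$ Poincaré series attached to the integer $m$, namely
\[
P_m(z):=\sum_{\gamma\in\Gamma_\infty\backslash\mathrm{SL}_2(\Z)}j(\gamma,z)^{-2k}e(m\gamma z),
\]
where $\Gamma_\infty$ is the stabiliser of the cusp at infinity and $j(\gamma,z)=cz+d$ is the usual automorphy factor. Standard estimates show this series converges absolutely for $k\geq 1$ and defines an element of $H_{2k}$. The first key computation is to unfold the Petersson inner product $\langle P_m,f\rangle$ against a normalised Hecke cusp form $f\in\mathcal B_k$: choosing a fundamental domain for $\Gamma_\infty\backslash\mathbb H$, the unfolding collapses the sum over cosets, and a standard beta-integral in the $y$-variable produces
\[
\langle P_m,f\rangle=\frac{\Gamma(2k-1)}{(4\pi m)^{2k-1}}\,a_f(m),
\]
where $a_f(m)=m^{k-1/2}\lambda_f(m)$ in the usual normalisation.

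Second, I would compute the Fourier expansion of $P_m$ directly from its definition via the Bruhat decomposition of $\mathrm{SL}_2(\Z)$: isolating the identity coset contributes the term $e(mz)$, while the remaining cosets, parametrised by $(c,d)$ with $c\geq 1$ and $d$ modulo $c$ coprime to it, give after a completion-of-the-square and the integral representation of the $J$-Bessel function
\[
\widehat{P_m}(n)=\delta_{m=n}+2\pi i^{-2k}\sum_{c\geq 1}\frac{S(m,n;c)}{c}\,J_{2k-1}\!\left(\frac{4\pi\sqrt{mn}}{c}\right)\cdot\left(\frac{n}{m}\right)^{k-\frac12}
\]
(up to harmless constants that I would track carefully). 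Here $\delta_{m=n}$ comes from the identity coset and the sum from the lower-triangular cosets via a standard splitting of $d\in\Z$ into residues modulo $c$.

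Third, the spectral expansion of $P_m$ in the orthogonal basis $\mathcal B_k$ reads
\[
P_m(z)=\sum_{f\in\mathcal B_k}\frac{\langle P_m,f\rangle}{\langle f,f\rangle}\,f(z),
\]
and extracting the $n$-th Fourier coefficient gives
\[
\widehat{P_m}(n)=\sum_{f\in\mathcal B_k}\frac{\langle P_m,f\rangle}{\langle f,f\rangle}\,a_f(n).
\]
Substituting the unfolded value of $\langle P_m,f\rangle$, equating the two expressions for $\widehat{P_m}(n)$, and rearranging the resulting normalising factors into $\omega_f=\Gamma(2k-1)/((4\pi)^{2k-1}\|f\|_2^2)$ yields exactly the stated identity (with the $i^{2k}$ arising from $i^{-2k}$ on the other side of the equality after moving factors). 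The main care needed in the argument is keeping track of the precise normalisations — the powers of $4\pi m$ from the beta integral, the phase $i^{-2k}$ coming from the Bessel-function integral representation, and the fact that $J_{2k-1}(x)$ is even or odd depending on the parity of $k$, which together conspire to produce the final $i^{2k}$; I would anticipate that this bookkeeping is the only genuinely delicate step, all the other ingredients being standard.
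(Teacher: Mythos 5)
The paper states this lemma without proof, citing it as the classical Petersson trace formula, so there is no in-text argument to compare against; your outline is the standard Poincar\'e-series proof and is essentially correct. Two small points of bookkeeping: the phase identity is trivial since $i^{-2k}=i^{2k}$ for every integer $k$ (no parity discussion of $J_{2k-1}$ is needed, as its argument $4\pi\sqrt{mn}/c$ is always positive), and absolute convergence of $P_m$ requires weight $2k>2$, so the case $k=1$ must be handled separately (there the cusp space is trivial and the identity reduces to a classical Kloosterman--Bessel evaluation); for the weights $k\sim K$ used in this paper this is immaterial.
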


Let us now define two integral transforms. For a smooth compactly supported function $h$, we set 
\begin{align*}
\hbar(y):=\int\limits_0^\infty\frac{h(\sqrt u)}{\sqrt{2\pi u}}e^{iyu}\,\mathrm d u
\end{align*}
and
\[W_K^{(2)}(m,n,v):=\int\limits_0^\infty \frac{V_{\sqrt u K+1}(m)V_{\sqrt u K+1}(n)h(\sqrt u)}{\sqrt{2\pi u}}e^{iuv}\,\mathrm d u.\]
Main properties of these integral transforms have been worked out by Khan\footnote{In \cite{Khan2010} the definition of $V_k(\xi)$ differs slightly from ours, but identical arguments to those in \cite{Khan2010} lead to same bounds for expressions involving $V_k(\xi)$ in our situation.}. He has shown \cite[(2.17)]{Khan2010} that
\begin{align}\label{W_K-2-bound}
W_K^{(2)}(n,m,v)\ll_{A_1,A_2,B}\left(\frac Kn\right)^{A_1}\left(\frac Km\right)^{A_2}v^{-B}
\end{align}
for any $A_1,A_2>0$ and $B\geq 0$. Thus $W_K^{(2)}(n,m,v)$ is essentially supported on $n\leq K^{1+\varepsilon}$, $m\leq K^{1+\varepsilon}$ and $v\leq K^\varepsilon$. Moreover, using integration by parts we have estimates for the derivatives;
\begin{align}\label{W2-der}
\frac{\partial^j}{\partial\xi^j}W_K^{(2)}\left(\frac{\xi}{|d|},\frac z{|d|},\frac {K^2c}{8\pi\sqrt{\xi z}}\right)\ll_{j,A_1,A_2,B,\eps}\left(\frac{|d|K}{\xi}\right)^{A_1}\xi^{-j}\left(\frac{|d|K}z\right)^{A_2}\left(\frac{\sqrt{\xi z}}{K^2c}\right)^B K^\eps
\end{align}
for any $j\geq 0$, $A_1,A_2,B>0$, and $\eps>0$.


We also have the identity \cite[(2.14)]{Khan2010}
\begin{align}\label{J-Bessel-avg-II}
&2\sum_{k\in\Z}i^{2k} h\left(\frac{2k-1}K\right)V_k(n)V_k(m)J_{2k-1}(t) \nonumber \\
&=-\frac K{\sqrt t}\Im\left(e^{-2\pi i/8}e^{it}W_K^{(2)}\left(n,m,\frac{K^2}{2t}\right)\right)+O\left(\frac t{K^4}\int\limits_{\mathbb R}v^4\left| \int\limits_0^\infty V_{uK+1}(n)V_{uK+1}(m)h(u)e^{iuv}\,\mathrm d u\right|\,\mathrm d v\right).
\end{align}
\noindent By repeated integration by parts we see that the error term is 
\begin{align}\label{error-bound}
O_{A_1,A_2}\left(\frac t{K^4}\left(\frac Km\right)^{A_1}\left(\frac Kn\right)^{A_2}\right)
\end{align}
for any $A_1,A_2>0$.

Similarly using \cite[Lemma 5.8.]{Iwaniec1997} we have 
\begin{align}\label{J-Bessel-avg-III}
&2\sum_{k\in\Z} h\left(\frac{2k-1}K\right)V_k(n)V_k(m)J_{2k-1}(t) \nonumber \\
&=h\left(\frac tK\right)V_{\frac{t+1}2}(m)V_{\frac{t+1}2}(n)+O_{A,\eps}\left(K^{-2+\eps}\left(\frac Km\right)^{A_1}\left(\frac Kn\right)^{A_2}\right)
\end{align}
for any $A_1,A_2\geq 1$ and $\eps>0$.

A simple application of Poisson summation gives
\begin{align}\label{simple-Poisson}
\sum_{k\in\Z}h\left(\frac{2k-1}K\right)=\frac K2\widehat h(0)+O_B(K^{-B})
\end{align}
for every $B>0$. 

For real $\xi_1>0$ and $\xi_2>0$ we define
\[W(\xi_1,\xi_2,v):=\frac1{(2\pi i)^2}\int\limits_{(A_1)}\int\limits_{(A_2)}(2\pi )^{-x-y}e^{x^2+y^2}\xi_1^{-x}\xi_2^{-y}\hbar_{x+y}(v)\,\frac{\mathrm d x\,\mathrm d y}{xy},\]
where 
\[
\hbar_z(v):=\int\limits_0^\infty \frac{h(\sqrt u)}{\sqrt{2\pi u}}u^{z/2}e^{iuv}\,\mathrm d u. 
\]
Then using (\ref{Stirling}) it is easy to see that
\begin{align}\label{new-weight-function}
W_K^{(2)}(m,n,v)=W\left(\frac mK,\frac nK,v\right)+O_\eps\left(K^{-1+\varepsilon}\right). 
\end{align}
By integration by parts, we also have the bound
\begin{align}\label{derivative-bound-h}
\hbar_z^{(j)}(v)\ll_{\Re(z),j,B}(1+|z|)^B|v|^{-B}
\end{align}
for any integers $j,B\geq 0$, and consequently 
\[W(\xi_1,\xi_2,v)\ll_{B,A_1,A_2}\xi_1^{-A_1}\xi_2^{-A_2}v^{-B}\]
for any $A_1,A_2>0$ and $B\geq 0$. 

Let 
\[
\widetilde \hbar_z(s):=\int\limits_0^\infty v^{s-1}\hbar_z(v)\,\mathrm d v
\]
denote the Mellin transform of $\hbar_z$. This converges for $\Re(s)>0$ by (\ref{derivative-bound-h}) and is holomorphic in this half-plane. In particular, by Mellin inversion we have
\begin{align}\label{Mellin-for-h}
\hbar_z(v)=\frac1{2\pi i}\int\limits_{(c)} v^{-s}\widetilde\hbar_z(s)\,\mathrm d s 
\end{align}
for any $c>0$.

The treatment of the off-diagonal in the fourth moment computation requires the following auxiliary result \cite[Lemma 3.3.]{Khan2010} concerning properties of the Mellin transform of the function $\hbar_z$.
\begin{lemma}\label{Mellin-transform-lemma}
For $0<\Re(s)<1$, we have
\begin{align}\label{Mellin-transform-formula}\widetilde\hbar_z(s)=\int\limits_0^\infty\frac{h(\sqrt u)}{\sqrt{2\pi u}}u^{z/2-s}\Gamma(s)\left(\cos\left(\frac{\pi s}2\right)+i\sin\left(\frac{\pi s}2\right)\right)\,\mathrm d u
\end{align}
and the bound $\widetilde\hbar_z(s)\ll_{\Re(z)}(1+|z|)^3|s|^{-2}$.
\end{lemma}
\noindent Observe that the right-hand side of (\ref{Mellin-transform-formula}) is holomorphic for $\Re(s)>0$. Hence, the identity (\ref{Mellin-transform-formula}) continues to hold in this region as does the same bound for $\widetilde\hbar_z(s)$ uniformly in vertical strips, which can be seen by repeated integration by parts and using Stirling's formula. 

Another key tool is the Poisson summation formula.

\begin{lemma}
Let $f$ be a Schwartz function and $a$ be a residue class modulo $c$. Then
\[\sum_{n\equiv\,a\,(c)}f(n)=\frac1c\sum_n\widehat f\left(\frac nc\right)e\left(\frac{an}c\right),\]
where $\widehat f$ denotes the Fourier transform of $f$. Note that this reduces to the classical Poisson summation formula when $c=1$. 
\end{lemma}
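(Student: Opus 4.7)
The plan is to reduce the stated identity to the classical (single-variable) Poisson summation formula $\sum_{m\in\Z} g(m) = \sum_{k\in\Z} \widehat{g}(k)$ applied to a suitable Schwartz function $g$ on $\R$. I parametrize the residue class $\{n\in\Z : n\equiv a\,(c)\}$ by writing $n = a + mc$ with $m\in\Z$, and set $g(m) := f(a+mc)$. Since $f$ is Schwartz and $m\mapsto a+mc$ is affine, $g$ is itself a Schwartz function, so both sides of the classical Poisson summation formula converge absolutely. In these coordinates,
\[ \sum_{n\equiv a\,(c)} f(n) \;=\; \sum_{m\in\Z} g(m) \;=\; \sum_{k\in\Z} \widehat{g}(k). \]

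The next step is a direct computation of $\widehat{g}(k)$ via the change of variables $u = a + mc$:
\[ \widehat{g}(k) \;=\; \int_{\R} f(a+mc)\, e(-mk)\,\mathrm{d}m \;=\; \frac{1}{c}\, e\!\left(\frac{ak}{c}\right) \int_\R f(u)\, e\!\left(-\frac{uk}{c}\right)\,\mathrm{d}u \;=\; \frac{1}{c}\, e\!\left(\frac{ak}{c}\right) \widehat{f}\!\left(\frac{k}{c}\right). \]
Substituting this identity into the Poisson formula above and relabeling the dummy summation variable $k$ as $n$ produces exactly the formula in the statement. Specializing to $c=1$ recovers the classical Poisson summation formula, as the lemma remarks.

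There is no real technical obstacle here: the argument is simply the translation and dilation behavior of the Fourier transform combined with one-variable Poisson summation. The only points to verify are that $g$ is Schwartz, which is immediate from the affine change of variables, and that absolute convergence justifies the manipulations after applying the Poisson identity, which again follows at once from the Schwartz hypothesis on $f$.
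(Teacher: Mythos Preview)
Your proof is correct and is the standard derivation of Poisson summation in arithmetic progressions from the classical one. The paper does not actually prove this lemma; it is simply stated as a basic tool, so there is nothing further to compare.
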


\noindent We shall also need a different variant of the Poisson summation formula. For this, let us define, for any $\ell\in\mathbb Z$, a Gauss-type sum
\begin{align}\label{Gauss-sum} 
\tau_\ell(n):=\sum_{b\,(n)}\left(\frac nb\right)e\left(\frac{\ell b}n\right)=\left(\frac{(1+i)}2+\left(\frac {-1}n\right)\frac{(1-i)}2\right)G_\ell(n),
\end{align}
where
\[ G_\ell(n):=\left(\frac{(1-i)}2+\left(\frac {-1}n\right)\frac{(1+i)}2\right)\sum_{a\,(n)}\left(\frac na\right)e\left(\frac{a\ell}n\right).\]
It can be shown that for odd coprime $m,n$ one has $G_\ell(mn)=G_\ell(m)G_\ell(n)$ and if $p^\alpha\|\ell$ for a prime $p$, then the exact formula for $G_\ell\left(p^\beta\right)$ is given in \cite[Lemma 2.3]{Soundararajan2000}. The variant of the Poisson summation formula needed for our purposes is contained in the following lemma.

\begin{lemma}\label{Poisson-w-char} (\cite[Lemma 7.]{Radziwill-Soundararajan2015})
Let $n$ be an odd integer and $q$ positive integer so that $(n,q)=1$. Suppose that $F$ is a smooth and compactly supported function on $\mathbb R$. Finally, let $\eta$ be a reduced residue class modulo $q$. Then
\[\sum_{d\equiv\eta\,(q)}\left(\frac dn\right)F(d)=\frac1{qn}\left(\frac qn\right)\sum_{\ell\in\mathbb Z}\widehat F\left(\frac \ell{nq}\right)e\left(\frac{\ell\eta\overline n}q\right)\tau_\ell(n),\]
where $\widehat F$ is the usual Fourier transform.
\end{lemma}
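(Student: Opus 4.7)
The plan is to decompose the sum according to the residue of $d$ modulo $n$, apply Poisson summation on each arithmetic progression modulo $nq$, and then identify the resulting character sum with $\tau_\ell(n)$, following the proof of \cite[Lemma 7]{Radziwill-Soundararajan2015}. Since $n$ is odd, the map $d\mapsto(d/n)$ is a Dirichlet character modulo $n$, so I can write
\[
\sum_{d\equiv\eta\,(q)}\left(\frac dn\right)F(d)=\sum_{a\,(n)}\left(\frac an\right)\sum_{\substack{d\equiv a\,(n)\\ d\equiv\eta\,(q)}}F(d).
\]

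Because $(n,q)=1$, the Chinese remainder theorem combines the two congruences into a single condition modulo $nq$: $d\equiv b(a)\pmod{nq}$ where $b(a)\equiv aq\overline q+\eta n\overline n\pmod{nq}$, with $\overline q$ denoting the inverse of $q$ modulo $n$ and $\overline n$ the inverse of $n$ modulo $q$. Applying classical Poisson summation to each inner sum then yields
\[
\sum_{d\equiv b(a)\,(nq)}F(d)=\frac1{nq}\sum_{\ell\in\Z}\widehat F\!\left(\frac\ell{nq}\right)e\!\left(\frac{b(a)\ell}{nq}\right),
\]
and the CRT structure of $b(a)$ splits the exponential multiplicatively as $e(a\overline q\ell/n)\cdot e(\eta\overline n\ell/q)$. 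The factor depending on $\eta$ factors out of the $a$-sum, leaving behind a Gauss-type sum of the form $\sum_{a\,(n)}(a/n)e(a\overline q\ell/n)$.

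To finish, I would perform the substitution $a\mapsto aq\pmod n$, which is a bijection of $\Z/n\Z$ since $(q,n)=1$. This turns $(a/n)$ into $(q/n)(a/n)$ and $a\overline q$ into $a$ modulo $n$, producing the prefactor $(q/n)$ together with the cleaner sum $\sum_{a\,(n)}(a/n)e(a\ell/n)$. The definition (\ref{Gauss-sum}), combined with quadratic reciprocity for the Kronecker symbol in the form $(a/n)=(n/a)(-1)^{(a-1)(n-1)/4}$ for odd $a$ together with the supplementary rule at $a$ even, then identifies this sum with $\tau_\ell(n)$: the complex prefactors involving $(-1/n)$ in the decomposition of $\tau_\ell(n)$ via $G_\ell(n)$ are engineered precisely to absorb the sign corrections arising from reciprocity in each case $n\equiv 1,3\pmod 4$. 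Assembling the pieces yields the stated identity. The main obstacle is this final identification of the natural $(a/n)$-Gauss sum with the object $\tau_\ell(n)$ in the definition; once the reciprocity bookkeeping is in hand, the CRT/Poisson steps are entirely routine.
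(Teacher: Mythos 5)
Your argument is correct and is precisely the proof of the cited result \cite[Lemma 7]{Radziwill-Soundararajan2015}; the paper itself gives no proof beyond the citation, so there is nothing different to compare against, and your CRT--Poisson--substitution chain is exactly right (including the origin of the factor $\left(\frac qn\right)$ from the change of variable $a\mapsto aq$). One clarification on the step you single out as the main obstacle: in Radziwi{\l\l}--Soundararajan the quantity $\tau_\ell(n)$ is \emph{by definition} the Gauss sum $\sum_{a\,(n)}\left(\frac an\right)e(a\ell/n)$ that your computation produces, so no reciprocity is needed to close the proof at all --- the reciprocity bookkeeping with the prefactors $\frac{1\pm i}{2}+\left(\frac{-1}{n}\right)\frac{1\mp i}{2}$ only enters when relating $\tau_\ell$ to $G_\ell$ so as to evaluate it at prime powers as in Lemma \ref{Gauss-type-sum}, and the display (\ref{Gauss-sum}) in the present paper appears to have the entries of the Kronecker symbols transposed (read literally, $\sum_{b\,(n)}\left(\frac nb\right)e(\ell b/n)$ does not equal your sum; compare the two for $n=7$, $\ell=1$).
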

\noindent The Gauss-type sum in the previous lemma may be evaluated explicitly.

\begin{lemma}\label{Gauss-type-sum} (\cite[Lemma 2.3.]{Soundararajan2000})
If $m$ and $n$ are coprime odd integers, then $\tau_\ell(mn)=\tau_\ell(m)\tau_\ell(n)$. Furthermore, if $p^\alpha$ is the highest power of $p$ that divides $\ell$ (setting $\alpha=\infty$ if $\ell=0$), then
\begin{align}
\tau_\ell\left(p^\beta\right)=\begin{cases}
0 & \text{if }\beta\leq\alpha\text{ is odd} \\
\varphi(p^\beta) & \text{if }\beta\leq\alpha\text{ is even} \\
-p^\alpha & \text{if }\beta=\alpha+1\text{ is even} \\
\left(\frac{\ell p^{-\alpha}}p\right)p^\alpha\sqrt p & \text{if }\beta=\alpha+1 \text{ is odd} \\
0 & \text{if }\beta\geq\alpha+2
\end{cases}
\end{align} 
\end{lemma}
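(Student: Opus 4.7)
The plan is to prove the lemma in two parts: first, the multiplicativity of $\tau_\ell$ over coprime odd moduli, and then the explicit evaluation at prime powers $p^\beta$.

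For the multiplicativity, I would parametrise residues $b \pmod{mn}$ via the Chinese Remainder Theorem, writing $b \equiv n\overline{n}b_m + m\overline{m}b_n \pmod{mn}$ with $\overline{n}n \equiv 1 \pmod{m}$ and $\overline{m}m \equiv 1 \pmod{n}$. The additive character then factors as $e(jb/(mn)) = e(j\overline{n}b_m/m)\,e(j\overline{m}b_n/n)$ and the Kronecker symbol factors as $\left(\frac{mn}{b}\right) = \left(\frac{m}{b}\right)\left(\frac{n}{b}\right)$. The delicate point is that for odd $m$ the symbol $\left(\frac{m}{b}\right)$ depends on $b \pmod{4m}$ rather than only on $b \pmod{m}$, so I would use quadratic reciprocity to rewrite it in terms of the Jacobi symbol $\left(\frac{b}{m}\right)$ together with an explicit sign correction depending on $m \bmod 4$ and $b \bmod 4$. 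After the change of variables $b_m \mapsto \overline{n}b_m$ and $b_n \mapsto \overline{m}b_n$ the double sum separates, and the residual reciprocity phases combine with the prefactor in \eqref{Gauss-sum} to produce the claimed identity $\tau_\ell(mn) = \tau_\ell(m)\tau_\ell(n)$.

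For the prime-power evaluation, I would split by the parity of $\beta$. If $\beta$ is even then $\left(\frac{p^\beta}{b}\right) = \left(\frac{p}{b}\right)^\beta$ equals $1$ for $(b,p)=1$ and $0$ otherwise, so $\tau_\ell(p^\beta)$ reduces to the Ramanujan sum $c_{p^\beta}(\ell)$, whose standard evaluation yields $\varphi(p^\beta)$, $-p^{\beta-1}$, or $0$ according to whether $\beta \leq \alpha$, $\beta = \alpha+1$, or $\beta \geq \alpha+2$, matching the three even-$\beta$ cases. If $\beta$ is odd then $\left(\frac{p^\beta}{b}\right) = \left(\frac{p}{b}\right)$, which by reciprocity equals $\left(\frac{b}{p}\right)$ up to a phase depending on $b \bmod 4$ and $p \bmod 4$. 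For $\beta \leq \alpha$ the exponential is trivial and orthogonality of the Legendre symbol over a reduced system modulo $p$ gives $0$. For $\beta = \alpha+1$, writing $b = b_0 + pc$ with $1 \leq b_0 \leq p-1$ and $0 \leq c < p^\alpha$, the sum over $c$ contributes a factor $p^\alpha$ while the sum over $b_0$ reduces (after the reciprocity phase is absorbed into the prefactor from \eqref{Gauss-sum}) to the classical quadratic Gauss sum $\sum_{b_0=1}^{p-1}\left(\frac{b_0}{p}\right) e(\ell b_0/p) = \left(\frac{\ell/p^\alpha}{p}\right)\eps_p\sqrt{p}$, producing the stated formula. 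For $\beta \geq \alpha+2$ an inner sum over a full period of the additive character vanishes.

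The main obstacle will be the careful bookkeeping of sign and $i$-factors from the Kronecker symbol: since $\left(\frac{p}{b}\right)$ is periodic in $b$ with period $4p$ rather than $p$, the reduction to Jacobi symbol sums introduces phase corrections of the form $(-1)^{\frac{p-1}{2}\frac{b-1}{2}}$ that must be propagated through both the CRT change of variables in the multiplicativity proof and the classical Gauss sum evaluation at $\beta=\alpha+1$. The prefactor $\tfrac{1+i}{2}+\left(\frac{-1}{n}\right)\tfrac{1-i}{2}$ in the definition \eqref{Gauss-sum} of $\tau_j$ is designed precisely to absorb these phases into the companion sum $G_j$, which is what makes the multiplicativity and the stated prime-power formulas come out cleanly.
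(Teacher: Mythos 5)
The paper offers no proof of this lemma: it is quoted verbatim from Soundararajan (Lemma~2.3 of \cite{Soundararajan2000}), so there is nothing internal to compare your argument against. Your outline is, in substance, exactly the standard proof of that result: Chinese Remainder Theorem plus reciprocity for the multiplicativity, reduction to the Ramanujan sum $c_{p^\beta}(\ell)$ for even $\beta$, and the splitting $b=b_0+pc$ together with the classical quadratic Gauss sum for odd $\beta$. The architecture is sound, and you correctly identify the key nuisance, namely that the symbol in \eqref{Gauss-sum} is only periodic modulo $4m$ and not modulo $m$.

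There is, however, one point where your bookkeeping does not close as written, and it sits precisely where you wave at ``the prefactor from \eqref{Gauss-sum}''. The quantity $\tau_\ell(n)$ is the \emph{bare} character sum; it carries no normalising prefactor. It is $G_\ell(n)=\bigl(\tfrac{1-i}{2}+\bigl(\tfrac{-1}{n}\bigr)\tfrac{1+i}{2}\bigr)\sum_a(\tfrac na)e(aj/n)=\eps_n^{-1}\tau_\ell(n)$ that does. If you carry out your own computation honestly, the $\beta=\alpha+1$ odd case yields $\tau_\ell(p^{\alpha+1})=\eps_p\bigl(\tfrac{\ell p^{-\alpha}}{p}\bigr)p^\alpha\sqrt p$ (the $\eps_p$ from the classical Gauss sum has nowhere to go), and the CRT argument yields $\tau_\ell(mn)=(-1)^{\frac{m-1}{2}\frac{n-1}{2}}\tau_\ell(m)\tau_\ell(n)$, which is \emph{not} multiplicativity when $m\equiv n\equiv 3\pmod 4$. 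The clean table and the clean multiplicativity hold for $G_\ell$, and that is in fact what Soundararajan's Lemma~2.3 asserts; the statement above has transcribed his evaluation of $G_\ell(p^\beta)$ as an evaluation of $\tau_\ell(p^\beta)$. So your proof, completed carefully, establishes the lemma for $G_\ell$ rather than for $\tau_\ell$ as literally defined; you should either state and prove it for $G_\ell$, or keep the factors $\eps_{p^\beta}$ and the reciprocity sign explicit. (This discrepancy is harmless for the paper's applications: in Lemma~\ref{d-summation} and the subsequent estimates the lemma is only used with $\ell=0$ or with even $\beta$ and square moduli, where $\eps_{p^\beta}=1$ and $\tau_\ell=G_\ell$.)
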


\subsection{Other tools}

\noindent We also record the following well-known uniform estimate for the $J$-Bessel function \cite[(2.11'')]{Iwaniec-Luo-Sarnak2000}. For $\nu\geq 0$ and $x>0$, the $J_\nu$-Bessel function satisfies the bound 
\begin{align}\label{J-Bessel}
J_\nu(x)\ll\frac x{\sqrt{\nu+1}}\left(\frac{ex}{2\nu+1}\right)^\nu.
\end{align}
Another crucial auxiliary result is the stationary phase method for estimating oscillatory exponential integrals. We quote the following special case of a result \cite[Proposition 8.2.]{Blomer-Khan-Young2013} by Blomer, Khan, and Young that is uniform with respect to multiple parameters (see also \cite[Lemma 5.6.]{Zenz}).

\begin{lemma}\label{saddle-point}
Let $X,Y,V,V_1,Q>0$ and $Z:=Q+X+Y+V_1+1$, and assume that
\[Y\geq Z^{3/20}, \qquad V_1\geq V\geq\frac{QZ^{1/40}}{Y^{1/2}}.\]
Suppose that $h$ is a smooth function on $\mathbb R$ with support on an interval $J$ of length $V_1$, satisfying
\[h^{(j)}(t)\ll_j XV^{-j}\]
for all $j\in\mathbb N\cup\{0\}$. Suppose that $f$ is a smooth function on $J$ such that there exists a unique point $t_0\in J$ such that $f'(t_0)=0$, and furthermore 
\[f''(t)\gg YQ^{-2},\quad f^{(j)}(t)\ll_j YQ^{-j}\quad\text{for all }j\geq 1\,\text{and }t\in J.
\]
Then
\[\int\limits_{\mathbb R}h(t)e(f(t))\,\mathrm d t=e^{\text{sgn}(f''(t_0))\pi i/4}\frac{e(f(t_0))}{\sqrt{|f''(t_0)|}}h(t_0)+O\left(\frac{Q^{3/2}X}{Y^{3/2}}\cdot\left(V^{-2}+(Y^{2/3}/Q^2)\right)\right).\]
\end{lemma}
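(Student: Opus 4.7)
The statement is the standard stationary phase lemma quoted almost verbatim from \cite[Proposition 8.2.]{Blomer-Khan-Young2013}, so strictly speaking no new proof is needed; however, if I had to reprove it in this form, here is the plan I would follow. The basic idea is to separate the integral into a contribution from a small neighbourhood of the stationary point $t_0$, where one expects the main term, and the complement, where repeated integration by parts produces a power-saving error. The natural stationary phase scale is $\Delta:=Q/\sqrt{Y}$, and the hypothesis $V\geq QZ^{1/40}/Y^{1/2}$ ensures that $\Delta$ is smaller than the scale $V$ on which $h$ varies, which is what makes the method effective.

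First I would introduce a smooth cutoff $\psi$ supported on $|t-t_0|\leq C\Delta Z^{1/80}$ (with $\psi\equiv 1$ on a slightly smaller interval), and split $\int h(t)e(f(t))\,\mathrm d t=I_{\mathrm{st}}+I_{\mathrm{tail}}$ accordingly. For $I_{\mathrm{tail}}$, on each interval where $|t-t_0|$ lies in a dyadic scale $\Delta_0\gg \Delta Z^{1/80}$, the lower bound $f''(t)\gg Y/Q^2$ gives $|f'(t)|\gg Y\Delta_0/Q^2$, so $N$-fold integration by parts against the non-stationary oscillation (with the derivative landing on the product of $h$ and the amplitude $1/f'$) yields a saving of the shape $(YQ^{-2}\Delta_0\min(V,\Delta_0))^{-N}$. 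Summing over dyadic scales up to $V_1$ and using $V_1\geq V$ and $Y\geq Z^{3/20}$ to ensure some positive power of $Z$ is gained per integration by parts, the tail is absorbed into the error term.

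For $I_{\mathrm{st}}$, I would Taylor expand $f(t)=f(t_0)+\tfrac12 f''(t_0)(t-t_0)^2+g(t)$ where $g(t)=O(YQ^{-3}|t-t_0|^3)$ on the support of $\psi$. On the chosen window one has $g(t)=O(Y^{-1/2}Z^{3/80})$, so $e(g(t))=1+O(Y^{-1/2}Z^{3/80})$ may be expanded; likewise $h(t)=h(t_0)+O(X\Delta Z^{1/80}/V)$, again small by the assumption $V\gg QZ^{1/40}/\sqrt{Y}$. The remaining Gaussian integral
\[
h(t_0)e(f(t_0))\int\limits_{\mathbb R}\psi(t)e\!\left(\tfrac12 f''(t_0)(t-t_0)^2\right)\,\mathrm d t
\]
evaluates, after completing the Gaussian and accounting for the tails cut off by $\psi$, to $h(t_0)e(f(t_0))e^{\mathrm{sgn}(f''(t_0))\pi i/4}|f''(t_0)|^{-1/2}$ plus admissible error.

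The main obstacle, and the only genuinely delicate bookkeeping, is verifying that the error terms generated by (i) the restriction of $\psi$, (ii) truncation of the Taylor expansions of $h$ and of $e(g)$, and (iii) the integration by parts in $I_{\mathrm{tail}}$ all fit inside the claimed bound $Q^{3/2}X Y^{-3/2}(V^{-2}+Y^{2/3}Q^{-2})$. This balancing is exactly where the hypotheses on the relative sizes of $Y,Q,V,V_1,Z$ are used, and the cleanest route is to follow the parameter choices in the proof of \cite[Proposition 8.2.]{Blomer-Khan-Young2013} directly; the formulation above is tuned precisely so that their argument applies without modification.
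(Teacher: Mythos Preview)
Your proposal is correct and aligns with the paper's approach: the paper does not prove this lemma but simply quotes it from \cite[Proposition~8.2]{Blomer-Khan-Young2013} (with a pointer also to \cite[Lemma~5.6]{Zenz}), exactly as you noted in your opening sentence. Your sketch of the stationary phase argument is a faithful outline of the proof in that reference and would be the right way to go if a self-contained proof were required here.
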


\section{Random mollifier}

\subsection{Random model}


\noindent In order to define a random counterpart for our mollifier we require a probabilistic random model for the Hecke eigenvalues $\lambda_f(m)$, which we now introduce following \cite{Lamzouri2019}. To motivate the model, let $G^\sharp$ denote the set of conjugacy classes of $G=\mathrm{SU}(2)$ endowed with the Sato--Tate measure. Note that if $m>1$ has the prime factorisation $m=p_1^{\alpha_1}\cdots p_\ell^{\alpha_\ell}$, then we have
\[ 
\lambda_f(m)=\prod_{j=1}^\ell\lambda_f\left(p_j^{\alpha_j}\right)=\prod_{j=1}^\ell\text{Tr}\left(\text{Sym}^{\alpha_j}(g_f(p_j))\right),\]
where 
\[ 
g_f(p):=\begin{pmatrix}
e^{i\alpha_p} & \\
& e^{i\beta_p}
\end{pmatrix}
\]
with $\alpha_p,\beta_p$ being the Satake parameters of $f$ at a prime $p$. We write $g_f^\sharp(p)$ for the conjugacy class of the matrix $g_f(p)$. Then it can be shown that for a fixed set of primes $\{p_1,...,p_\ell\}$, the $\ell$-tuples $\{g_f^\sharp(p_1),...,g_f^\sharp(p_\ell)\}_{f\in B_k}$ of conjugacy classes equidistribute inside $(G^\sharp)^\ell$ as $k\longrightarrow\infty$.

This suggests the following random model for $\lambda_f(m)$. Let $\{g_p^\sharp\}_p$ be a sequence of independent random variables with values in $G^\sharp$ that are distributed according to the Sato--Tate measure. Then we define the random variable $X$ by setting $X(1)=1$ and 
\[
X(m):=\prod_{j=1}^\ell \text{Tr}\left(\text{Sym}^{\alpha_j}(g^\sharp_{p_j})\right)\]
for $m=p_1^{\alpha_1}\cdots p_\ell^{\alpha_\ell}$.

We note that 
\begin{align}\label{random-Hecke}
X(m)X(n)=\sum_{d|(m,n)}X\left(\frac{mn}{d^2}\right)
\end{align}
and $\mathbb E(X(m))=1_{m=1}$. By linearity of the expectation these give
\begin{align}\label{correlation-expectation}
\mathbb E(X(m)X(n))=\sum_{\substack{d|(m,n)\\
mn=d^2}}1=1_{m=n}.
\end{align}
These random variables $X(m)$ provide an adequate model for the Hecke eigenvalues $\lambda_f(m)$.

\subsection{Construction of a random mollifier}

Let $X$ be the random variable as above. Then we define a random mollifier
\[ 
M_1(X;d):=(\log K)^{1/2}\prod_{0\leq j\leq J}M_{1,j}(X;d),\]
where 
\[ 
M_{1,j}(X;d):=\sum_{\substack{p|n\Rightarrow p\in I_j\\
\Omega(n)\leq 2\ell_j}}\frac{\lambda(n)\chi_d(n)\nu_2(n;\ell_j)}{2^{\Omega(n)}\sqrt n}\sum_{u|n}c_n(u)X(u). 
\]
Note that we may also write
\[ 
M_1(X;d)=(\log K)^{1/2}\sum_{n\leq K^{2\delta_0}}\frac{\lambda(n)\chi_d(n)h_1(n)}{2^{\Omega(n)}\sqrt n}\sum_{u|n}c_n(u)X(u).
\]
This is a random counterpart for $M_g(d)^2$ as can be seen by comparing the previous display to (\ref{Mollifier-power}) for $\ell=1$. 

\section{Character sum}

\noindent In the proof of Proposition \ref{fourth-moment-with-average} we require the evaluation of a certain character sum. In this section we achieve this task. In order to do this the following result is needed.

\begin{lemma}(\cite[Theorem 2.1.2.]{BEW})\label{quad_char_quad_pol} Let $p$ be an odd prime and $a,b,c$ be integers with $p\nmid a$. Then 
\[ 
\sum_{x\,(p)}\left(\frac{ax^2+bx+c}p\right)=\begin{cases}
-\left(\frac ap\right) & \text{if }b^2-4ac\not\equiv 0\,(p) \\
(p-1)\left(\frac ap\right) & \text{if }b^2-4ac\equiv 0\,(p)
\end{cases}\]
\end{lemma}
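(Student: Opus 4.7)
The plan is to reduce the sum to a one-variable quadratic character sum of the shape $\sum_y\left(\frac{y^2-D}{p}\right)$ by completing the square, and then to evaluate this reduced sum by elementary counting. Throughout, $p$ is odd and $p\nmid a$, so $2a$ is invertible modulo $p$.

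First, I would observe the algebraic identity
\[
4a(ax^2+bx+c) \;=\; (2ax+b)^2 - (b^2-4ac)
\]
and deduce, using multiplicativity of the Legendre symbol together with $\left(\frac{4}{p}\right)=1$, that
\[
\left(\frac{ax^2+bx+c}{p}\right)
\;=\;\left(\frac{a}{p}\right)\left(\frac{(2ax+b)^2-D}{p}\right),
\qquad D:=b^2-4ac.
\]
Since $\gcd(2a,p)=1$, the substitution $y\equiv 2ax+b\pmod p$ is a bijection on $\mathbb Z/p\mathbb Z$, so
\[
\sum_{x\,(p)}\left(\frac{ax^2+bx+c}{p}\right)
\;=\;\left(\frac{a}{p}\right)\sum_{y\,(p)}\left(\frac{y^2-D}{p}\right).
\]
It then remains to evaluate the inner sum in the two cases.

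If $D\equiv 0\pmod p$, the inner sum collapses to $\sum_y\left(\frac{y^2}{p}\right)=p-1$, matching the claimed factor $(p-1)\left(\frac{a}{p}\right)$. If $D\not\equiv 0\pmod p$, I would show the inner sum equals $-1$ by a direct count. Let $N_0,N_1,N_{-1}$ denote the number of $y\pmod p$ with $\left(\frac{y^2-D}{p}\right)$ equal to $0,1,-1$ respectively, so the sum equals $N_1-N_{-1}$ and $N_0+N_1+N_{-1}=p$. Then $N_0$ equals the number of square roots of $D$ modulo $p$, which is $2$ if $D$ is a nonzero quadratic residue and $0$ otherwise. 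To compute $N_1$, I parametrize solutions of $y^2-z^2\equiv D\pmod p$ with $z\not\equiv 0$ via $(y-z)(y+z)=D$, i.e.\ via pairs $(u,v)$ with $uv\equiv D\pmod p$ and $u\ne v$; since $p$ is odd each such pair recovers $(y,z)$ uniquely, and $(z,-z)$ give the same $y$, so $N_1=\tfrac12\bigl((p-1)-N_0\bigr)$. A direct substitution in either case ($D$ a nonzero QR or a non-QR) gives $N_1-N_{-1}=-1$, as required.

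Combining the two cases yields exactly the stated formula. The argument is entirely elementary; the only mild subtlety is the case split at the end, and no real obstacle arises since the count of $(u,v)$ with $uv=D$ and $u\ne v$ is uniform in whether $D$ is a square, so the two cases produce the same value $-1$ for $\sum_y\left(\frac{y^2-D}{p}\right)$.
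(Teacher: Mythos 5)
Your proof is correct and complete. Note that the paper itself gives no proof of this lemma: it simply cites Berndt--Evans--Williams \cite[Theorem 2.1.2]{BEW}, so there is no in-paper argument to compare against. Your route --- completing the square via $4a(ax^2+bx+c)=(2ax+b)^2-(b^2-4ac)$, substituting $y=2ax+b$, and then evaluating $\sum_y\left(\frac{y^2-D}{p}\right)$ by counting representations $y^2-z^2=D$ through the parametrization $uv=D$ --- is the standard elementary derivation, and all the steps check out: the case $D\equiv 0$ gives $p-1$ directly, and in the case $D\not\equiv 0$ your identities $N_1=\tfrac12\bigl((p-1)-N_0\bigr)$ and $N_{-1}=p-N_0-N_1$ yield $N_1-N_{-1}=-1$ independently of $N_0$, exactly as you observe. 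The only virtue of spelling this out rather than citing the literature is self-containedness; the content is the same classical fact.
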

\noindent The main result of this section is the following. 

\begin{prop}\label{twisted-Kloosterman-sum}
Let $d$ be an odd fundamental discriminant and let $c,u,v,$ and $\eta$ be natural numbers.
\begin{enumerate}
\item Assume that $v\eta=u[c,d]^2/c^2$. Then the sum 
\begin{align*}
\sum_{x\,([c,d])}\sum_{w\,([c,d])}\chi_d(x)\chi_d(w)S\left(xw,u;c\right)e\left(\frac{xv+w\eta}{[c,d]}\right)
\end{align*}
vanishes unless\footnote{Note also that the condition $d|c$ forces $v\eta=u$.} $d|c$, in which case it equals 
\[ 
\frac{c\cdot\varphi(c)}{\varphi(|d|)}\chi_d(u)(-1)^{\#\{p|d\}}\prod_{\substack{p|d\\
p|\frac cd}}(1-p).
\]
Here $\#\{p|d\}$ is the number of primes dividing $d$. 

\item The sum
\[
\sum_{x\,([c,d])}\sum_{w\,([c,d])}\chi_d(x)\chi_d(w)S\left(xw,u;c\right)
\]
vanishes unless $d|c$ and $(d,c/d)=1$ in which case it equals 
\[
\text{sgn}(d)\cdot c\cdot\varphi(|d|)\chi_d(u)\Xi_{c/|d|}(u),
\]
where $\Xi_r(u)$ is the Ramanujan sum given by
\[
\Xi_r(u):=\mathop{{\sum}^{\makebox[0pt][l]{$*$}}}_{x\,(r)}e\left(\frac{ux}{r}\right).
\]
\end{enumerate}
\end{prop}

\begin{proof}
We begin by proving the part $(1)$. Suppose first that $d\nmid c$. Then there exists a prime $p|d$ for which $p\nmid c$. From the assumption $v\eta=u[c,d]^2/c^2$ it follows that $p|v\eta$. By symmetry we may assume that $p|v$.

After opening the Kloosterman sum and rearranging our task is to evaluate
\begin{align}\label{char-sum-factor}
\mathop{{\sum}^{\makebox[0pt][l]{$*$}}}_{\gamma\,(c)}\sum_{w\,([c,d])}\chi_d(w)e\left(\frac{u\overline\gamma}c+\frac{w\eta}{[c,d]}\right)\sum_{x\,([c,d])}\chi_d(x)e\left(\frac{xw\gamma}c+\frac{xv}{[c,d]}\right).
\end{align}
Let us focus on the inner sum. We compute
\begin{align*}
\sum_{x\,([c,d])}\chi_d(x)e\left(\frac{xw\gamma}c+\frac{xv}{[c,d]}\right)&=\sum_{h=0}^{[c,d]/d-1}\sum_{\ell=0}^{d-1}\chi_d(hd+\ell)e\left((hd+\ell)\left(\frac{w\gamma}c+\frac v{[c,d]}\right)\right)\\
&=\sum_{\ell=0}^{d-1}\chi_d(\ell)e\left(\ell\left(\frac{w\gamma}c+\frac v{[c,d]}\right)\right)\sum_{h=0}^{[c,d]/d-1}e\left(hd\left(\frac{w\gamma}c+\frac v{[c,d]}\right)\right).
\end{align*}
Clearly the inner $h$-sum is given by
\begin{align*}
\begin{cases}
\frac{[c,d]}d & \text{if }w\gamma\cdot\frac d{(c,d)}+v\equiv 0\,\left(\frac{[c,d]}d\right) \\
0 & \text{otherwise}
\end{cases}
\end{align*}
The $\ell$-sum is a usual Gauss sum and it evaluates to 
\[ 
\sqrt d\chi_d\left(\frac{w\gamma\cdot\frac d{(c,d)}+v}{[c,d]/d}\right)
\]
using the identity $(c,d)\cdot[c,d]=cd$.

Let us now write
\[ 
w\gamma\cdot\frac d{(c,d)}+v=h\cdot\frac{[c,d]}d
\]
for some integer $h$. By the assumption made in the beginning of the proof, both summands on the left-hand side are divisible by $p$. On the other hand, by construction $p$ does not divide the number $[c,d]/d$ as $d$ is squarefree. Thus $p|h$. But in this case 
\[ 
\chi_d\left(\frac{w\gamma\cdot\frac d{(c,d)}+v}{[c,d]/d}\right)=\chi_d(h)=0
\]
as $p|d$, and so the whole sum vanishes. 

Thus we are reduced to the case $d|c$. Note that in this case $[c,d]=c$ and $(c,d)=d$. All the computations above are still valid so taking into account the evaluation of the $h$- and $\ell$-sums it follows that (\ref{char-sum-factor}) is given by 
\[ 
\frac c{\sqrt d}\mathop{{\sum}^{\makebox[0pt][l]{$*$}}}_{\gamma\,(c)}\,\sum_{w\,(c)}\chi_d(w)e\left(\frac{v\eta\overline\gamma+w\eta}c\right)\chi_d\left(\frac{d(w\gamma+v)}c \right)\cdot 1_{w\gamma+v\equiv 0\,(c/d)}.
\]
Note that writing $w\gamma+v=h\cdot\frac cd$, the $w$-sum can be written as
\[ 
\sum_{h\,(d)}\chi_d\left(\overline\gamma\left(h\cdot\frac cd-v\right)\right)\chi_d(h)e\left(\frac{\eta h\overline\gamma}d\right),
\]
and so at this point (\ref{char-sum-factor}) is given by 
\[ 
\frac c{\sqrt d}\mathop{{\sum}^{\makebox[0pt][l]{$*$}}}_{\gamma\,(c)}\chi_d(\overline\gamma)\sum_{h\,(d)}\chi_d\left(h\left(h\cdot\frac cd-v\right)\right)e\left(\frac{\eta h\overline\gamma}d\right).
\]
To treat the subtraction inside a multiplicative character we write, using \cite[(3.12)]{Iwaniec-Kowalski2004},
\[ 
\chi_d\left(h\cdot\frac cd-v\right)=\frac1{\tau(\chi_d)}\sum_{b\,(d)}\chi_d(b)e\left(\frac{b\left(h\cdot\frac cd-v\right)}d\right).
\]  
Thus
\[ 
\sum_{h\,(d)}\chi_d\left(h\left(h\cdot\frac cd-v\right)\right)e\left(\frac{\eta h\overline\gamma}d\right)=\sum_{b\,(d)}\chi_d(b)e\left(-\frac{bv}d\right)\frac1{\tau(\chi_d)}\sum_{h\,(d)}\chi_d(h)e\left(\frac{h\left(b\cdot\frac cd+\eta\overline\gamma\right)}d\right).
\]
The inner Gauss sum equals
\[ 
\sqrt d\chi_d\left(b\cdot\frac cd+\eta\overline\gamma\right)
\]
and we also have $\tau(\chi_d)=\sqrt d$. At this point our sum is given by 
\begin{align*} 
&\frac c{\sqrt d}\mathop{{\sum}^{\makebox[0pt][l]{$*$}}}_{\gamma\,(c)}\chi_d(\overline\gamma)\sum_{b\,(d)}\chi_d\left(b^2\cdot\frac cd+b\eta\overline\gamma\right)e\left(-\frac{bv}d\right)\\
&=\frac c{\sqrt d}\sum_{b\,(d)}e\left(-\frac{bv}d\right)\mathop{{\sum}^{\makebox[0pt][l]{$*$}}}_{\gamma\,(c)}\chi_d\left(b\eta\gamma^2+b^2\cdot\frac cd\gamma\right).
\end{align*}
Now we use the complete multiplicativity of $\chi_d$ and the Chinese remainder theorem as well as apply Lemma \ref{quad_char_quad_pol} to the inner sum to see that 
\begin{align*}
\mathop{{\sum}^{\makebox[0pt][l]{$*$}}}_{\gamma\,(c)}\chi_d\left(b\eta\gamma^2+b^2\cdot\frac cd\gamma\right)&=\frac{\varphi(c)}{\varphi(|d|)}\mathop{{\sum}^{\makebox[0pt][l]{$*$}}}_{\gamma\,(d)}\chi_d\left(b\eta\gamma^2+b^2\cdot\frac cd\gamma\right)\\
&=\frac{\varphi(c)}{\varphi(|d|)}\chi_d(b\eta)(-1)^{\#\{p|d\}}\prod_{\substack{p|d\\
p|\frac cd}}(1-p). 
\end{align*} 
Thus we have shown that the sum we started with can be written as 
\[ 
\frac{c\cdot\varphi(c)}{\varphi(|d|)\cdot \sqrt d}\chi_d(\eta)(-1)^{\#\{p|d\}}\prod_{\substack{p|d\\
p|\frac cd}}(1-p)\sum_{b\,(d)}\chi_d(b)e\left(-\frac{bv}d\right)
\]
when $d|c$.

The final remaining sum is again a Gauss sum that equals $\sqrt d\chi_d(v)$. This completes the proof of $(1)$ by recalling $v\eta=u$ (for $d|c$). 

Moving to the proof of part $(2)$, we again begin by opening the Kloosterman sum. The sum in question equals
\begin{align}\label{opened-sum}
\mathop{{\sum}^{\makebox[0pt][l]{$*$}}}_{\gamma\,(c)}e\left(\frac{u\overline\gamma}c\right)\sum_{x\,([c,d])}\sum_{w\,([c,d])}\chi_d(x)\chi_d(w)e\left(\frac{\gamma xw}c\right).
\end{align}
Similarly as in the proof of part $(1)$, we see that the sum vanishes unless $d|c$. We assume this condition from now on. Next we apply the well-known identity
\begin{align}\label{char-exp-id}
\sum_{w\,(c)}\chi_d(w)e\left(\frac{bw}c\right)=\begin{cases}
\frac c{|d|}\cdot\tau(\chi_d)\chi_d(b|d|/c) & \text{if }c|bd \\
0 & \text{otherwise}
\end{cases}
\end{align}
with the choice $b=\gamma x$. We observe that the sum vanishes unless $c|\gamma xd$. But as $(\gamma,c)=1$ this can only happen when $c|xd$. In this case, if $(c/d,d)>1$ we have 
\[
\chi_d(x)=\chi_d\left(\frac cd \cdot \frac{xd}c\right)=0,
\]
and so the sum also vanishes. Thus we also assume that $(c/d,d)=1$. By using the identity (\ref{char-exp-id}) we compute
\begin{align*}
\sum_{x\,([c,d])}\sum_{w\,([c,d])}\chi_d(x)\chi_d(w)e\left(\frac{\gamma xw}c\right)&=\frac c{|d|}\cdot\tau(\chi_d)\sum_{y\,(|d|)}\chi_d\left(\frac{cy}{|d|}\right)\chi_d(\gamma y)\\
&=\frac c{|d|}\cdot\tau(\chi_d)\chi_d\left(\frac{c\gamma}{|d|}\right)\sum_{y\,(|d|)}\chi_d(y)^2\\
&=\frac c{|d|}\cdot\varphi(|d|)\tau(\chi_d)\chi_d\left(\frac{\gamma c}{|d|}\right).
\end{align*}
Substituting this into the expression (\ref{opened-sum}) gives
\[
\frac c{|d|}\cdot\varphi(|d|)\tau(\chi_d)\chi_d\left(\frac c{|d|}\right)\mathop{{\sum}^{\makebox[0pt][l]{$*$}}}_{\gamma\,(c)}\chi_d(\gamma)e\left(\frac{u\overline\gamma}c\right).
\] 
Making the change of variables $\gamma\mapsto\overline\gamma$ and applying the Chinese remainder theorem we have
\begin{align*}
\mathop{{\sum}^{\makebox[0pt][l]{$*$}}}_{\gamma\,(c)}\chi_d(\gamma)e\left(\frac{u\overline\gamma}c\right)&=\mathop{{\sum}^{\makebox[0pt][l]{$*$}}}_{\gamma\,(c)}\chi_d(\gamma)e\left(\frac{u\gamma}c\right)\\
&=\tau(\chi_d)\chi_d\left(\frac{uc}{|d|}\right)c_{c/|d|}(u),
\end{align*}
where we have also used the fact that $\chi_d(\overline\gamma)=\chi_d(\gamma)$. Thus the sum we are interested equals
\[
\frac c{|d|}\varphi(|d|)\tau(\chi_d)^2\chi_d(u)c_{c/|d|}(u)
\]
when $d|c$ and $(d,c/d)=1$. On the other hand, $\tau(\chi_d)^2=\text{sgn}(d)|d|$ and so we get the claimed expression
\[
\text{sgn}(d)\cdot c\cdot\varphi(|d|)\chi_d(u)c_{c/d}(u).
\]
This completes the proof. 
\end{proof}

\section{Proof of Proposition \ref{second-moment-with-average}}

\noindent We start with the following result concerning averages of quadratic characters. The proof is standard and similar calculations can be found e.g. in \cite{Radziwill-Soundararajan2015, Lester-Radziwill2021, Jaasaari-Lester-Saha2022, Jaasaari-Lester-Saha2024}.

\begin{lemma}\label{d-summation}
Suppose that $n$ is an odd positive integer with $n\leq X^{1-\gamma}$ for some fixed small $\gamma>0$. Let $\Phi$ denote a smooth and compactly supported function in $\R^+$. Then
\[ 
\sumflat_d \chi_d(n)\Phi\left(\frac{|d|}X\right)= \frac{4X}{\pi^2}\widehat\Phi(0)\prod_{p|n}\left(1+\frac1p\right)^{-1}\cdot 1_{n=\square}+O_{\varepsilon}\left(X^{1/2+\varepsilon}\sqrt{n}\right). \]
\end{lemma}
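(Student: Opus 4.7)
The plan is to apply the Mellin representation (\ref{integral-rep-for-weight}) of $V_k$ together with Poisson summation in arithmetic progressions, following the standard template for averages of quadratic characters against smooth weights. First I would write
\[
V_k\!\left(\frac{u}{|d|}\right) = \frac{1}{2\pi i}\int_{(\eps)}\eta(s)\,u^{-s}\,|d|^{s}\,e^{s^2}\,\frac{ds}{s},
\]
exchange summation and integration, and reduce to estimating
\[
\mathcal S(s):=\sumflat_d \chi_d(nu)\,|d|^s\,\Phi\!\left(\frac{|d|}{X}\right)
\]
uniformly for $s$ on vertical lines. To evaluate $\mathcal S(s)$, I would parameterize odd fundamental discriminants as $d=\sigma m$ with $\sigma\in\{\pm 1\}$ and $m$ odd positive squarefree, subject to $\sigma m\equiv 1\pmod 4$. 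Detecting squarefreeness via $\mu^2(m)=\sum_{a^2\mid m}\mu(a)$ and writing $m=a^2 b$ with $a$ odd (and necessarily coprime to $nu$, since otherwise $\chi_{\sigma a^2 b}(nu)=0$), the inner $b$-sum runs over a fixed residue class modulo $4$. Applying Lemma \ref{Poisson-w-char} with $q=4$ to each such class expresses it as a dual sum indexed by $\ell\in\Z$ whose summands involve the Gauss-type sums $\tau_\ell(nu)$ and the Fourier transform of $\Phi(a^2t/X)(a^2t)^s$.

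The frequency $\ell=0$ yields the main term. By Lemma \ref{Gauss-type-sum}, $\tau_0(nu)$ vanishes unless $nu$ is a perfect square, in which case $\tau_0(nu)=\varphi(nu)$; this accounts for the indicator $\mathbf 1_{nu=\square}$. Summing the main-term expression over $a$ coprime to $2nu$ produces the Euler product
\[
\sum_{(a,\,2nu)=1}\frac{\mu(a)}{a^{2(s+1)}}=\frac{1}{\zeta(2s+2)}\prod_{p\mid 2nu}\bigl(1-p^{-2s-2}\bigr)^{-1}.
\]
Shifting the $s$-contour leftward across the simple pole at $s=0$ (residue $\eta(0)=1$ from the $1/s$) and evaluating, the factor $\zeta(2)^{-1}=6/\pi^2$ combines with $\varphi(nu)/nu=\prod_{p\mid nu}(1-p^{-1})$, the $p=2$ local factor, and the contributions from the four $(\sigma,b\bmod 4)$ subfamilies to give the stated main term $\frac{X\pi^2}{9}\int_0^\infty\Phi\cdot\prod_{p\mid nu}(1+1/p)^{-1}$.

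Two error contributions remain. The nonzero frequencies $\ell\neq 0$ are controlled using the bound $|\tau_\ell(nu)|\ll(\ell,nu)^{1/2}\sqrt{nu}$ from Lemma \ref{Gauss-type-sum} together with the rapid decay of the relevant Fourier transform (effectively truncating the dual sum to $|\ell|\ll nu\cdot a^2 X^{\eps-1}$); summing over $a\ll X^{1/2}$ yields $O(X^{1/2+\eps}\sqrt{nu})$. The second error term $O(X^{3/4}k^{-1/4+\eps})$ arises by terminating the contour shift at $\Re(s)=-1/4$ rather than going further: on that line Stirling's estimate (\ref{Stirling}) gives $|\eta(s)|\ll k^{-1/4}(1+|s|)^A$, while the weight integral is bounded by $X^{3/4}u^{1/4}$, which is admissible since $nu\le X^{1-\eps}$.

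The main obstacle I anticipate is the careful bookkeeping of signs and residue classes modulo $4$, in particular tracking how the four subfamilies (parameterized by $\sigma\in\{\pm 1\}$ and $a\bmod 4$) combine with the mod-$2$ local factor to yield the precise constant $\pi^2/9$; the analytic estimates themselves are routine given the tools collected in the preceding section.
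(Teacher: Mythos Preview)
Your proposal is correct and follows essentially the same route as the paper: squarefree detection, Poisson summation via Lemma~\ref{Poisson-w-char}, the $\ell=0$ frequency producing the main term through $\tau_0(nu)=\varphi(nu)\cdot 1_{nu=\square}$, and a contour shift to $\Re(s)=-1/4$ for the secondary error. The only organizational difference is that the paper splits the $\alpha$-sum at $Y=X^{1/2-\eps}/\sqrt{nu}$ before applying Poisson, so that for $\alpha\le Y$ the nonzero frequencies are immediately negligible by Fourier decay (no Gauss-sum bound needed) and the tail $\alpha>Y$ is handled by trivial estimation; your version instead keeps all $a$ together and invokes $|\tau_\ell(nu)|\ll(\ell,nu)^{1/2}\sqrt{nu}$, which leads to the same $O(X^{1/2+\eps}\sqrt{nu})$ after summing over~$a$.
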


\begin{proof}
We pick out the property that $d$ is squarefree by the identity
\begin{equation}\label{sqf-detection}
\sum_{\substack{\alpha=1\\
(\alpha,2)=1\\
\alpha^2|d}}^\infty\mu(\alpha)=\begin{cases}
1\quad\text{if }d\,\text{ is squarefree}\\
0\quad\text{otherwise}
\end{cases}
\end{equation}
Note that the above identity holds without the condition $(\alpha,2)=1$, but this can be added as by construction $(d,2)=1$. Inserting this to the expression on the left-hand side of the statement in the lemma shows that the $d$-sum is given by
\begin{align}\label{after-Poisson}
\sum_{\substack{\alpha=1\\
(\alpha,2n)=1}}^\infty\mu(\alpha)\left(\frac{\alpha^2}{n}\right)\sum_{r\equiv\overline{\alpha^2}\,(4)}\left(\frac r{n}\right)\Phi\left(\frac{r\alpha^2}X\right).
\end{align}
Let us set $Y:=X^{1/2-\eps}/\sqrt{n}$ for an arbitrarily small fixed $\eps>0$. We split the $\alpha$-sum in (\ref{after-Poisson}) into two parts corresponding to $\alpha\leq Y$ and $\alpha>Y$. For the latter terms we estimate the $r$-sum trivially and get the upper bound 
\[ 
\ll\sum_{\alpha>Y}\frac X{\alpha^2}\ll\frac{X}Y\ll_\eps X^{1/2+\eps}\sqrt{n}.
\]
For the terms with $\alpha\leq Y$ we will evaluate the $r$-sum by applying Lemma \ref{Poisson-w-char}. The terms where $n$ is a square will contribute the main term in the zero frequency term on the dual side and the rest will give the error term.

Using Lemma \ref{Gauss-type-sum} the zero frequency contribution is given by
\begin{align}\label{MT}
\frac{X}{2}\widehat\Phi(0)\sum_{\substack{\alpha=1\\
(\alpha,2n)=1}}^\infty\frac{\mu(\alpha)}{\alpha^2}\frac{\varphi(n)}{n}\cdot 1_{n=\square},
\end{align}
where we have added back in the terms with $\alpha>Y$ at the cost of an error term of size $\ll_\eps X^{1/2+\varepsilon}\sqrt{n}$.

A simple computation shows that, recalling $n$ is odd,
\begin{align}\label{simplification}
\sum_{\substack{\alpha=1\\
(\alpha,2n)=1}}^\infty\frac{\mu(\alpha)}{\alpha^2}\cdot\frac{\varphi(n)}{n}=\prod_{p\neq 2}\left(1-\frac1{p^2}\right)\prod_{p|n}\left(1+\frac1p\right)^{-1}, 
\end{align}
leading to (\ref{MT}) being equal to 
\begin{align}\label{productform}
\frac{X}{2}\widehat\Phi(0)\prod_{p\neq 2}\left(1-\frac1{p^2}\right)\prod_{p|n}\left(1+\frac1p\right)^{-1}\cdot 1_{n=\square}.
\end{align}

It remains to bound the contribution coming from the other terms (i.e. $\ell\neq 0$) after applying Lemma \ref{Poisson-w-char}. The sum we have to estimate after an application of Poisson summation is
\[ 
\frac1{n}\sum_{\substack{\alpha\leq Y\\
(\alpha,2n)=1}}^\infty\mu(\alpha)\sum_{\ell\neq 0}\widehat \Phi\left(\frac{X\ell}{4n\alpha^2}\right)e\left(\frac{\ell\overline{\alpha^2n}}{4}\right)\tau_\ell(n).
\] 
But, as $X/n\alpha^2\geq X^{2\eps}$ in this case, the $\ell$-sum is, say, $\ll X^{-1}$ due to the rapid decay of $\widehat \Phi$ and using the trivial estimate $|\tau_\ell(n)|\leq n$. Thus the whole sum is bounded by $\ll Y/X\ll_\eps X^{1/2+\eps}\sqrt{n}$, concluding the proof.
\end{proof}

We now begin the computation of the second moment of $|c_g(|d|)|$. We will only prove the lower bound 
\[
\sum_{k\in\Z}h\left(\frac{2k-1}K\right)\sum_{g\in B^+_{k+\frac12}}\alpha_g\sumflat_d\, |c_g(|d|)|^2 M_g(d)^2\phi\left(\frac{|d|}X\right)\gg XK,
\]
since the proof of the upper bound is similar. By Waldspurger's formula and (\ref{Mollifier-power}) we have to estimate the average 
\begin{align*}
&\sum_{k\in\Z}h\left(\frac{2k-1}K\right)\sum_{f\in B_k}\omega_f\sumflat_d\, L\left(\frac12,f\otimes\chi_d\right)\phi\left(\frac{|d|}X\right)M_f(d)^2 \\
&=(\log K)^{1/2}\sum_{k\in\Z}h\left(\frac{2k-1}K\right)\sum_{f\in B_k}\omega_f\sumflat_d\, L\left(\frac12,f\otimes\chi_d\right)\phi\left(\frac{|d|}X\right) \sum_{n\leq K^{2\delta_0}}\frac{h_1(n)a_f(n)\lambda(n)\chi_d(n)}{2^{\Omega(n)}\sqrt n}.
\end{align*}
Let us first use the approximate functional equation and then execute the $f$-sum. For the latter, note that using (\ref{a-function}) and applying the Petersson formula we have
\begin{align*}
\sum_{f\in B_k}\lambda_f(m)a_f(n)&=\sum_{f\in B_k}\lambda_f(m)\sum_{u|n}c_n(u)\lambda_f(u)\\
&=\sum_{u|n}c_n(u)\cdot 1_{m=u}+\text{error},
\end{align*}
where the error term contributes
\begin{align*}
&(\log K)^{1/2}\sum_{k\in\Z}h\left(\frac{2k-1}K\right)i^{2k}\sumflat_d\,\phi\left(\frac{|d|}X\right)\sum_{m=1}^\infty\frac{\chi_d(m)}{\sqrt m}V_k\left(\frac m{|d|}\right)\\
&\qquad\qquad\times\sum_{n\leq K^{2\delta_0}}\frac{h_1(n)\lambda(n)\chi_d(n)}{2^{\Omega(n)}\sqrt n}\sum_{u|n}c_n(u)\sum_{c=1}^\infty\frac{S(m,u;c)}c J_{2k-1}\left(\frac{4\pi\sqrt{mu}}c\right)
\end{align*}
to the original sum. This gives a negligible contribution by estimating trivially using Weil's bound for Kloosterman sums and the bound (\ref{J-Bessel}) as the $m$-sum is essentially supported for $m\leq |d|K^{1+\eps}$ in which case $mu\leq mn\ll_\eps K^{1+2\delta_0+\eps}X$, and recalling that $\eta_1$ is chosen so that $\delta_0<1/2$ as well as $X\ll\sqrt K$.

The main term contributes 
\begin{align*}
(\log K)^{1/2}\sum_{k\in\Z}h\left(\frac{2k-1}K\right)\sumflat_d \phi\left(\frac{|d|}X\right)\sum_{n\leq K^{2\delta_0}}\frac{h_1(n)\lambda(n)\chi_d(n)}{2^{\Omega(n)}\sqrt n}\sum_{u|n}c_n(u)\frac{\chi_d(u)}{\sqrt u}V_k\left(\frac u{|d|}\right)
\end{align*}
to the original sum. We may replace $V_k(u/|d|)$ by $1$ using (\ref{weight-function-asymptotics}), which produces an error
\[ 
\ll_\eps\sqrt{Xu} K^{1/2+\delta_0+\eps}=o(XK).
\]
Here the final estimate holds as we can guarantee that $\delta_0$ is sufficiently small.

Let us define a multiplicative function  
\[ 
\iota(n):=\sum_{\substack{u|n\\
nu=\square}}\frac{c_n(u)}{\sqrt u}. 
\]
At this point we evaluate the $d$-sum using Lemma \ref{d-summation} and the $k$-sum by (\ref{simple-Poisson}) to see that the main term is
\begin{align}\label{second-moment-main-term}
&\asymp XK(\log K)^{1/2}\prod_{0\leq j\leq J}\sum_{\substack{p|n\Rightarrow p\in I_j\\
\Omega(n)\leq 2\ell_j}}\frac{\lambda(n)\iota(n)\nu_2(n;\ell_j)}{2^{\Omega(n)}\sqrt n}\prod_{p|\text{rad}(n)}\left(1+\frac1p\right)^{-1}.
\end{align} 
Here the implied constants depend on the weight functions $h$ and $\phi$.

Next we estimate the individual terms in the product. By Rankin's trick and the properties of $\nu_r(n;\ell)$ listed after (\ref{Mollifier-power}) we have
\begin{align*}
&\sum_{\substack{p|n\Rightarrow p\in I_j\\
\Omega(n)\leq 2\ell_j}}\frac{\lambda(n)\iota(n)\nu_2(n;\ell_j)}{2^{\Omega(n)}\sqrt n}\prod_{p|\text{rad}(n)}\left(1+\frac1p\right)^{-1}\\
&=\sum_{p|n\Rightarrow p\in I_j}\frac{\lambda(n)\iota(n)\nu_2(n)}{2^{\Omega(n)}\sqrt n}\prod_{p|\text{rad}(n)}\left(1+\frac1p\right)^{-1}+O\left(\frac1{2^{\ell_j}}\sum_{p|n\Rightarrow p\in I_j}\frac{\iota(n)\nu_2(n;\ell_j)}{\sqrt n}\prod_{p|\text{rad}(n)}\left(1+\frac1p\right)^{-1}\right).
\end{align*}
Note that the error term is 
\[ 
\ll \frac1{2^{\ell_j}}\prod_{p\in I_j}\left(1+O\left(\frac 1p\right)\right)\ll \frac{1_{j=0}\cdot(\log K)^{O(1)}+1}{2^{\ell_j}}.\]
Using multiplicativity we also have
\begin{align*}
&\sum_{p|n\Rightarrow p\in I_j}\frac{\lambda(n)\iota(n)\nu_2(n)}{2^{\Omega(n)}\sqrt n}\prod_{p|\text{rad}(n)}\left(1+\frac1p\right)^{-1}\\
&\qquad\qquad=\prod_{p\in I_j}\left(1-\frac{\iota(p)\nu_2(p)}{2\sqrt p}\left(1+\frac1p\right)^{-1}+\frac{\iota(p^2)\nu_2(p^2)}{4p}\left(1+\frac 1p\right)^{-1}+O\left(\frac1{p^{3/2}}\right)\right).
\end{align*}
Combining the previous estimates with the fact that (which follows from $\iota(p)=1/\sqrt p$)
\[
\prod_{p\in I_j}\left(1-\frac{\iota(p)\nu_2(p)}{2\sqrt p}\left(1+\frac1p\right)^{-1}+\frac{\iota(p^2)\nu_2(p^2)}{4p}\left(1+\frac 1p\right)^{-1}+O\left(\frac1{p^2}\right)\right)^{-1}\ll 1_{j=0}\cdot (\log K)^{O(1)}+1\]
it follows that the main term (\ref{second-moment-main-term}) is
\begin{align*}
&\asymp XK(\log K)^{1/2}\prod_{c<p\leq K^{\theta_J}}\left(1-\frac{\iota(p)\nu_2(p)}{2\sqrt p}\left(1+\frac1p\right)^{-1}+\frac{\iota(p^2)\nu_2(p^2)}{4p}\left(1+\frac 1p\right)^{-1}+O\left(\frac1{p^2}\right)\right)\\
&\qquad\qquad\qquad\times\prod_{0\leq j\leq J}\left(1+O\left(\frac{1_{j=0}\cdot(\log K)^{O(1)}+1}{2^{\ell_j}}\right)\right).
\end{align*}
To estimate the latter product from below note that
\[ 
\prod_{0\leq j\leq J}\left(1+O\left( \frac{1_{j=0}\cdot(\log K)^{O(1)}+1}{2^{\ell_j}}\right)\right)=1+O\left(\frac1{2^{\ell_J}}\right)\geq\frac12
\]
as $\eta_2$ was chosen to be sufficiently small. 

Moreover, by Mertens' theorem the first product is bounded from below by
\[ 
\prod_{c<p<K^{\theta_J}}\left(1-\frac1{2p}+O\left(\frac1{p^2}\right)\right)\gg (\log K)^{-1/2}
\]
as $\iota(p)=1/\sqrt p$, $\iota(p^2)=1+1/p$, and $\nu_2(p)=\nu_2(p^2)=2$.

Hence, using these we get that (\ref{second-moment-main-term}) is
\[ 
\gg XK(\log K)^{1/2}(\log K)^{-1/2}\gg XK.\]
This finishes the proof of Proposition \ref{second-moment-with-average}. \qed

\section{Relating the Fourth moment to a random model}

\noindent We now begin the estimation of the mollified fourth moment of the Fourier coefficients. Here our argument combines ingredients from \cite{Bui-Evans-Lester-Pratt2022} and \cite{Khan2010}. Indeed, in order to compute the mollified fourth moment many technical calculations simplify by relating the original sum to a random model. The starting point is a lemma, which plays a key role in the proof of Proposition \ref{fourth-moment-with-average}. Let us define a random Dirichlet series
\[ 
L(X;d,k):=2(1+\varepsilon_{k,d})\sum_{m_1=1}^\infty\sum_{m_2=1}^\infty\frac{X(m_1)X(m_2)\chi_{d}(m_1m_2)}{\sqrt{m_1m_2}}V_k\left(\frac {m_1}{|d|}\right)V_k\left(\frac {m_2}{|d|}\right).
\]
\noindent Note that this is well-defined by the bounds (\ref{der-bounds}). 

\begin{lemma}\label{twisted-second-moment-averaged}
Let $n\leq K^{4\delta_0}$ be an odd positive integer and suppose that $X\ll\sqrt K$. Then we have
\begin{align*}
&\sumflat_{d}\chi_d(n)\phi\left(\frac{|d|}X\right)\sum_{k\in\Z}h\left(\frac{2k-1}K\right)\sum_{f\in B_k}\omega_f L\left(\frac12,f\otimes\chi_d\right)^2a_f(n)\\
&=\sumflat_{d}\phi\left(\frac{|d|}X\right)\chi_d(n)\sum_{k\in\Z}h\left(\frac{2k-1}K\right)\mathbb E\left(L(X;d,k)\sum_{u|n}c_n(u)X(u)\right)\\
&\qquad+K\widehat h(0)\sumflat_{d}\,\frac{\varphi(|d|)}{|d|}\phi\left(\frac{|d|}X\right)\sum_{u|n}\frac{c_n(u)}{\sqrt u}\chi_d(nu)\frac1{2\pi i}\int\limits_{(1+\eps)}\frac{e^{2x^2}}{x^2}u^{-x}\sum_{v|u}v^{2x}\,\mathrm d x+O_\eps\left(XK^{1/2+\varepsilon}\right).
\end{align*}
\end{lemma}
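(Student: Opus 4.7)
The strategy is to begin with the approximate functional equation of Lemma~\ref{lem:AFE} applied to $L(1/2,f\otimes\chi_d)^2$, expand $a_f(n)$ via (\ref{a-function}), use the Hecke relation $\lambda_f(m_2)\lambda_f(u) = \sum_{e|(m_2,u)}\lambda_f(m_2u/e^2)$, and apply the Petersson trace formula (Lemma~\ref{Petersson}) to the inner $f$-sum. This decomposes the left-hand side into a \emph{diagonal} contribution $\mathrm{Diag}$ (arising from the $\mathbf{1}_{m_1=m_2u/e^2}$ part) and an \emph{off-diagonal} contribution $\mathrm{Off}$ involving Kloosterman sums and $J$-Bessel functions.

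For $\mathrm{Diag}$, I invoke the random-model identity
\[
\mathbb{E}\bigl(X(m_1)X(m_2)X(u)\bigr) = \sum_{e|(m_2,u)}\mathbf{1}_{m_1=m_2u/e^2},
\]
which follows immediately from (\ref{random-Hecke}) and (\ref{correlation-expectation}) by writing $X(m_2)X(u) = \sum_{e|(m_2,u)}X(m_2u/e^2)$ and using $\mathbb{E}(X(a)X(b)) = \mathbf{1}_{a=b}$. Inserting this into the definition of $L(X;d,k)$ shows that $\mathrm{Diag}$ coincides exactly with the first term on the right-hand side of the lemma, so no further work is needed for this piece.

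The bulk of the argument is the analysis of $\mathrm{Off}$. I first execute the $k$-sum using identity (\ref{J-Bessel-avg-II}), converting the average of $V_kV_kJ_{2k-1}$ into the smooth weight $W_K^{(2)}$ modulo an error controlled by (\ref{W_K-2-bound}), which after summation over all remaining variables contributes at most $O(X^{3/4}K^{1+\eps})$. The approximation (\ref{new-weight-function}) replaces $W_K^{(2)}$ by $W$ at the cost of an $O(K^{-1+\eps})$ error, and opening $W$ through its double Mellin representation produces an integrand in $(s_1,s_2)$ containing Dirichlet series in $m_1$ and $m_2$. Opening the Kloosterman sums, inserting the squarefree detection identity (\ref{sqf-detection}) exactly as in the proof of Lemma~\ref{d-summation}, and applying Poisson summation in the form of Lemma~\ref{Poisson-w-char} to the $d$-sum then isolates a main contribution at the zero frequency together with the factor $\varphi(d)/d$ arising from the Gauss-type sums evaluated by Lemma~\ref{Gauss-type-sum}; the non-zero frequencies, together with the $\alpha>X^{1/2-\eps}$ tails from (\ref{sqf-detection}), are absorbed into the error $O(X^{3/4}K^{1+\eps})$ thanks to the rapid decay of the relevant Fourier transforms and of $\hbar_z$ guaranteed by Lemma~\ref{Mellin-transform-lemma}.

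Once the zero frequency is isolated, the double Mellin integrand has a pole at $s_2=-s_1$; shifting the $s_2$-contour past this pole and renaming $s_1=x$ picks up a residue in which the Gaussian factors combine to $e^{2x^2}$, the $1/(s_1s_2)$ factor becomes $-1/x^2$, the Kloosterman-modulus $c$-sum collapses through a local Euler-product computation to the divisor polynomial $\sum_{v|u}v^{2x}$ weighted by $u^{-x}$, and the character produced is exactly $\chi_d(nu)$. The prefactor $2K\widehat h(0)$ appears as the Poisson-summed $k$-average $\sum_k h((2k-1)/K) = \tfrac{K}{2}\widehat h(0)+O(K^{-B})$ combined with the factor of $4$ from the approximate functional equation, reproducing the second right-hand side. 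The principal obstacle will be executing these contour shifts and the Kloosterman-modulus sum uniformly so that the residue emerges in the precise form stated while the remaining contour beyond the pole is absorbed into $O(X^{3/4}K^{1+\eps})$; this mirrors (and uses the technical bounds of) the analogous computations of Khan~\cite{Khan2010}, together with the Mellin inversion formula of Lemma~\ref{Mellin-transform-lemma}.
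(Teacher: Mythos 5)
Your treatment of the diagonal is correct and is essentially the paper's: the identity $\mathbb E(X(m_1)X(m_2)X(u))=\sum_{e\mid(m_2,u)}1_{m_1=m_2u/e^2}$ (equivalently $\sum_{j\mid(m_1,m_2)}1_{m_1m_2/j^2=u}$, which is the grouping the paper uses) turns the Petersson diagonal into $\mathbb E\bigl(L(X;d,k)\sum_{u\mid n}c_n(u)X(u)\bigr)$ with no further work. The off-diagonal, however, is where the real content of the lemma lies, and your plan for it has genuine gaps. First, after executing the $k$-sum via (\ref{J-Bessel-avg-II}) the off-diagonal carries the explicit oscillatory phase $e\bigl(2\sqrt{m_1m_2u}/(jc)\bigr)$ multiplying $W_K^{(2)}$; Mellin-expanding $W$ and "forming Dirichlet series in $m_1$ and $m_2$" does not dispose of this phase, and the resulting $m_1,m_2$-sums are not Dirichlet series. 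The paper instead splits $m_1,m_2$ into residue classes modulo $[c,d]$, Poisson-sums in both variables, and applies stationary phase (Lemma \ref{saddle-point}); the saddle point is what produces the constraint $v\eta=u[c,d]^2/c^2$, the frequencies $v\mid u$, and hence the factor $u^{-x}\sum_{v\mid u}v^{2x}$ (which therefore does not come from the $c$-sum, as you assert). Second, Poisson summation in $d$ is the wrong move here: the target formula retains the sum over $d$, and the weight $\varphi(d)/d$ does not come from Gauss sums in a $d$-Poisson. It comes from the evaluation of the complete character sum $\sum_{x,w\,([c,d])}\chi_d(x)\chi_d(w)S(xw,u;c)e\bigl(\tfrac{xv+w\eta}{[c,d]}\bigr)$ in Proposition \ref{twisted-Kloosterman-sum}, which forces $d\mid c$ and yields $\tfrac{c\,\varphi(c)}{\varphi(d)}\chi_d(u)(-1)^{\#\{p|d\}}\prod_{p|d,\,p|c/d}(1-p)$; combined with the local factors $\prod_{p|d}(1-p^{-1})$ surviving at the $z=0$ pole, this is what produces $\varphi(d)/d$. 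Your proposal never invokes this character-sum evaluation (the entire content of Section 7), and without it the main term of the off-diagonal cannot be extracted.

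Two further points. Your choice to contract $\lambda_f(m_2)\lambda_f(u)$ rather than $\lambda_f(m_1)\lambda_f(m_2)$ yields the same diagonal but produces Kloosterman sums $S(m_1,m_2u/e^2;c)$ in the off-diagonal, destroying the product structure $S(m_1m_2/j^2,u;c)$ on which both the double Poisson summation and Proposition \ref{twisted-Kloosterman-sum} rely; you would need to regroup before proceeding. Finally, the pole bookkeeping is off: in this lemma there is no pole at $s_2=-s_1$ to pick up (that pole belongs to the random-model computation of Lemma \ref{random-averaged}, where its residue cancels the present off-diagonal). Here the relevant singularities, after introducing the third Mellin variable $z$ for $\hbar_{x+y}$, are the simple pole at $y=z-x$ of $\zeta(1+x+y-z)$ (arising from the $h$-sum) and the simple pole at $z=0$ of $\Gamma(z)$ in $\widetilde\hbar_{x+y}(z)$; it is the latter that produces $e^{2x^2}/x^2$ and, via $\zeta(0)=-1/2$, the prefactor $2K\widehat h(0)$.
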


\begin{proof}
Using the approximate functional equation, and noting that $(1+\varepsilon_{k,d})^2=2(1+\varepsilon_{k,d})$, we have that the sum on the left-hand side equals
\begin{align*}
&2\sumflat_{d}\,\phi\left(\frac{|d|}X\right)\sum_{k\in\Z}h\left(\frac{2k-1}K\right)(1+\varepsilon_{k,d})\\
&\qquad\qquad\times\sum_{f\in B_k}\omega_f\sum_{m_1=1}^\infty\sum_{m_2=1}^\infty \frac{\lambda_f(m_1)\lambda_f(m_2)\chi_d(m_1m_2n)}{\sqrt{m_1m_2}}V_k\left(\frac{m_1}{|d|}\right)V_k\left(\frac {m_2}{|d|}\right)\sum_{u|n}c_n(u)\lambda_f(u).  
\end{align*}
Rearranging and using the Hecke relations this can be written as
\begin{align*}
&2\sumflat_{d}\,\phi\left(\frac{|d|}X\right)\sum_{k\in\Z}h\left(\frac{2k-1}K\right)(1+\varepsilon_{k,d})\\
&\qquad\qquad\times\sum_{m_1=1}^\infty\sum_{m_2=1}^\infty \frac{\chi_d(m_1m_2n)}{\sqrt{m_1m_2}}V_k\left(\frac{m_1}{|d|}\right)V_k\left(\frac {m_2}{|d|}\right)\sum_{u|n}c_n(u)\sum_{j|(m_1,m_2)}\sum_{f\in B_k}\omega_f\lambda_f\left(\frac{m_1m_2}{j^2}\right)\lambda_f(u). 
\end{align*}
Now an application of the Petersson formula shows that this is
\begin{align*}
&2\sumflat_{d}\,\phi\left(\frac{|d|}X\right)\sum_{k\in\Z}h\left(\frac{2k-1}K\right)(1+\varepsilon_{k,d})\\
&\qquad\times\sum_{m_1=1}^\infty\sum_{m_2=1}^\infty \frac{\chi_d(m_1m_2n)}{\sqrt{m_1m_2}}V_k\left(\frac{m_1}{|d|}\right)V_k\left(\frac {m_2}{|d|}\right)\sum_{u|n}c_n(u)\sum_{j|(m_1,m_2)}1_{m_1m_2/j^2=u}+\text{off-diagonal contribution},
\end{align*}
where the off-diagonal contribution is given by the sum of two terms
\begin{align}\label{off-diagonal-sums}
&4\pi\sumflat_{d}\,\phi\left(\frac{|d|}X\right)\sum_{k\in\Z}h\left(\frac{2k-1}K\right)\sum_{m_1=1}^\infty\sum_{m_2=1}^\infty \frac{\chi_d(m_1m_2n)}{\sqrt{m_1m_2}}V_k\left(\frac{m_1}{|d|}\right)V_k\left(\frac {m_2}{|d|}\right) \nonumber\\
&\qquad\qquad\times\sum_{u|n}c_n(u)\sum_{j|(m_1,m_2)}i^{2k}\sum_{c=1}^\infty\frac{S(m_1m_2/j^2,u;c)}{c}J_{2k-1}\left(\frac{4\pi\sqrt{m_1m_2u}}{jc}\right) \nonumber\\
&+4\pi\sumflat_{d}\,\text{sgn}(d)\phi\left(\frac{|d|}X\right)\sum_{k\in\Z}h\left(\frac{2k-1}K\right)\sum_{m_1=1}^\infty\sum_{m_2=1}^\infty \frac{\chi_d(m_1m_2n)}{\sqrt{m_1m_2}}V_k\left(\frac{m_1}{|d|}\right)V_k\left(\frac {m_2}{|d|}\right)\\
&\qquad\qquad\times\sum_{u|n}c_n(u)\sum_{j|(m_1,m_2)}\sum_{c=1}^\infty\frac{S(m_1m_2/j^2,u;c)}{c}J_{2k-1}\left(\frac{4\pi\sqrt{m_1m_2u}}{jc}\right). \nonumber
\end{align}
For the main term we use the relations (\ref{random-Hecke}) and (\ref{correlation-expectation}) to write it as 
\begin{align*}
& 2\sumflat_{d}\,\phi\left(\frac{|d|}X\right)\sum_{k\in\Z}h\left(\frac{2k-1}K\right)(1+\varepsilon_{k,d})\\
&\qquad\qquad\times\sum_{m_1=1}^\infty\sum_{m_2=1}^\infty \frac{\chi_d(m_1m_2n)}{\sqrt{m_1m_2}}V_k\left(\frac{m_1}{|d|}\right)V_k\left(\frac {m_2}{|d|}\right)\sum_{u|n}c_n(u)\sum_{j|(m_1,m_2)}\mathbb E\left(X\left(\frac{m_1m_2}{j^2}\right)X(u)\right)\\
&=2\sumflat_{d}\phi\left(\frac{|d|}X\right)\sum_{k\in\Z}h\left(\frac{2k-1}K\right)(1+\varepsilon_{k,d})\\
&\qquad\qquad\qquad\times\sum_{m_1=1}^\infty\sum_{m_2=1}^\infty \frac{\chi_d(m_1m_2n)}{\sqrt{m_1m_2}}V_k\left(\frac{m_1}{|d|}\right)V_k\left(\frac {m_2}{|d|}\right)\sum_{u|n}c_n(u)\mathbb E\left(X(m_1)X(m_2)X(u)\right)\\
&=\sumflat_{d}\,\phi\left(\frac{|d|}X\right)\sum_{k\in\Z}h\left(\frac{2k-1}K\right)\mathbb E\left(L(X;d,k)\chi_d(n)\sum_{u|n}c_n(u) X(u)\right).
\end{align*}
Next we will analyse the off-diagonal contribution in more detail. Let us first treat the first term in (\ref{off-diagonal-sums}). We start by executing the $k$-sum using (\ref{J-Bessel-avg-II}) to see that the first term in the off-diagonal contribution equals
\begin{align}\label{after-k-sum}
&-\sqrt\pi K\sumflat_d\phi\left(\frac{|d|}X\right)\chi_d(n)\sum_{m_1=1}^\infty\sum_{m_2=1}^\infty\frac{\chi_d(m_1)\chi_d(m_2)}{m_1^{3/4}m_2^{3/4}}\sum_{u|n}c_n(u)u^{-1/4} \nonumber\\
&\qquad\times\Im\left(e^{-2\pi i/8}\sum_{(j,d)=1}\frac1{j}\sum_{c=1}^\infty\frac{S(m_1m_2,u;c)}{\sqrt c} e\left(\frac{2\sqrt{m_1m_2u}}{c}\right)W_K^{(2)}\left(\frac{m_1j}{|d|},\frac {m_2j}{|d|},\frac{K^2c}{8\pi\sqrt{m_1m_2u}}\right)\right)\\
&\qquad\qquad\qquad\qquad+O_\eps\left(X^{3+4\delta_0+\eps}/K^2\right), \nonumber
\end{align}
where the error comes from estimating the error term using (\ref{J-Bessel-avg-II}) and (\ref{error-bound}) for $A_1=A_2=\varepsilon$. Observe that all the sums are convergent by the bounds for the function $W_K^{(2)}$.

Splitting the $m_1$- and $m_2$-sums into congruence classes modulo $[c,d]$, these sums may be written as
\begin{align}\label{division-to-congruence-classes}
&\sum_{x\,([c,d])}\sum_{w\,([c,d])}S(xw,u;c)\chi_d(x)\chi_d(w)\\
&\qquad\qquad\times\sum_{m_1\equiv x\,([c,d])}\sum_{m_2\equiv w\,([c,d])}m_1^{-3/4}m_2^{-3/4}e\left(\frac{2\sqrt{m_1m_2u}}{c}\right)W_K^{(2)}\left(\frac{m_1j}{|d|},\frac {m_2j}{|d|},\frac{K^2c}{8\pi\sqrt{m_1m_2u}}\right)\nonumber.
\end{align}
Applying Poisson summation to the $m_1$-sum shows that it equals
\[\frac1{[c,d]}\sum_{v\in\Z} e\left(\frac{xv}{[c,d]}\right)\int\limits_{\mathbb R}y^{-3/4}e\left(\frac{2\sqrt{ym_2u}}{c}-\frac{yv}{[c,d]}\right)W_K^{(2)}\left(\frac {yj}{|d|},\frac{m_2j}{|d|},\frac{K^2c}{8\pi\sqrt{ym_2u}}\right)\,\mathrm d y.\]
Similarly, applying Poisson summation to the $m_2$-sum gives, after some computations, that our double sum equals
\begin{align*}
&\sum_{x\,([c,d])}\sum_{w\,([c,d])}S\left(xw,u;c\right)\chi_d(x)\chi_d(w)\frac1{[c,d]^2}\sum_{v\in\Z}\sum_{\eta\in\Z} e\left(\frac{xv+w\eta}{[c,d]}\right)\cdot\mathcal I,
\end{align*}
where 
\begin{align*}
\mathcal I:=\int\limits_{\mathbb R}\int\limits_{\mathbb R}y^{-3/4}z^{-3/4}e\left(\frac{2\sqrt{yzu}}{c}-\frac{yv}{[c,d]}-\frac{z\eta}{[c,d]}\right)W_K^{(2)}\left(\frac {yj}{|d|},\frac{zj}{|d|},\frac{K^2c}{8\pi\sqrt{yzu}}\right)\,\mathrm d y\,\mathrm d z.
\end{align*}

We first focus on the $y$-integral. Let us set
\[ 
f(y):=\frac{2\sqrt{yzu}}{c}-\frac{yv}{[c,d]}-\frac{z\eta}{[c,d]}.\]
Note that for $v\leq 0$, the derivative $f'(y)$ is bounded away from zero on the support, and repeated integration by parts gives a negligible contribution. Hence we may assume $v\geq 1$ before applying Lemma \ref{saddle-point}. An easy computation shows that the integral has a saddle point at
\[y_0:=\frac{[c,d]^2zu}{c^2v^2}.\]
Similarly a straightforward computation gives 
\[f(y_0)=z\left(\frac{u[c,d]}{c^2v}-\frac{\eta}{[c,d]}\right)\qquad\text{and}\qquad f''(y_0)=-\frac12\cdot\frac{c^2v^3}{zu[c,d]^3}.\]
Note that by (\ref{W_K-2-bound}) the function $W_K^{(2)}$ decays rapidly unless 
\begin{align}\label{restrictions}
|y|\ll_\eps |d|K^{1+\eps}/j,\qquad |z|\ll_\eps |d|K^{1+\eps}/j, \quad\text{and}\quad |yz|\gg_\eps K^{4-\eps}c^2/u.
\end{align}
Under these restrictions we have 
\[ 
c^2j^2\ll_\eps \frac{X^{2+\eps}u}{K^2}.
\]
As $u\leq n\leq K^{4\delta_0}$ we deduce that $c$- and $j$-sums are both essentially supported for $c,j\ll K^{2\delta_0-1+\eps}X$. We also observe that from the restrictions (\ref{restrictions}) it follows that the stationary point only occurs when 
\[
v,\eta\ll K^{-1+\eps}X^2 u \ll K^\eps u \ll K^{4\delta_0+\eps}.   
\]
We truncate the $y$-integral smoothly to $|y|\leq K^{1+\eps}|d|/j$ with a negligible error and then apply Lemma \ref{saddle-point} with the choices $Q=V=V_1=K|d|/j$, $X=K^\eps$, and $Y=\sqrt{zuK|d|}/c\sqrt j$. It is straightforward to check that the conditions of the lemma are met under (\ref{restrictions}). We conclude that 
\[ 
\mathcal I\sim\sqrt 2e^{-\pi i/4}\sqrt c u^{-1/4}\int\limits_{\R}z^{-1}e\left( z\left(\frac{u[c,d]}{c^2v}-\frac\eta{[c,d]}\right)\right)W_K^{(2)}\left(\frac{[c,d]^2zuj}{c^2v^2|d|},\frac{zj}{|d|},\frac{K^2c^2v}{8\pi zu[c,d]}\right)\,\mathrm d z
\]
with an error $\ll K^{-5/4+\eps}X^{-5/4}j^{5/4}u^{-3/4}$, and so it follows that the expression (\ref{after-k-sum}) is, after some simplification, equal to
\begin{align}\label{off-diagonal-intermediate}
&-\sqrt{2\pi}K\sum_{u|n}\frac{c_n(u)}{\sqrt u}\sumflat_d\,\phi\left(\frac{|d|}X\right)\chi_d(n)\sum_{(j,d)=1}\frac1{j}\Im\bigg(e^{-\pi i/2}\sum_{c=1}^\infty\frac1{[c,d]^2} \sum_{v\in\Z}\sum_{\eta\in\Z}\sum_{x\,([c,d])}\sum_{w\,([c,d])}\chi_d(x)\chi_d(w)\nonumber\\
& \times S(xw,u;c)e\left(\frac{xv+w\eta}{[c,d]}\right)\int\limits_{\mathbb R}z^{-1}e\left(z\left(\frac{u[c,d]}{c^2v}-\frac\eta{[c,d]}\right)\right)W_K^{(2)}\left(\frac{zu[c,d]^2j}{v^2c^2|d|},\frac{zj}{|d|},\frac{K^2c^2v}{8\pi zu[c,d]}\right)\,\mathrm d z\bigg)\\
&\qquad\qquad\qquad\qquad+O_\eps\left(X^{2}K^{-5/2+25\delta_0/2+\eps}\right) \nonumber.
\end{align} 
Here the contribution from the error term in the asymptotics for the integral $\mathcal I$ is estimated as
\begin{align*}
& \ll_\eps KX\cdot K^{-5/4+\eps} X^{-5/4}\sum_{c\ll K^{2\delta_0-1+\eps}}\sum_{j\ll K^{2\delta_0-1+\eps}}\sum_{v\ll K^{4\delta_0+\eps}}\sum_{\eta\ll K^{4\delta_0+\eps}}j^{1/4}\\
& \ll_\eps X^{2}K^{-5/2+25\delta_0/2+\eps}
\end{align*}
Note also that the exponential phase in the integral (\ref{off-diagonal-intermediate}) vanishes when $\eta v=u[c,d]^2/c^2$. If this is not the case, we may bound the integral using the first derivative test \cite[Section 1.5.]{Huxley1996}. Indeed, note that in this case the absolute value of the derivative of the phase function is $\geq 1/c^2v[c,d]$. Using the first derivative test and estimating everything else trivially using the Weil bound for Kloosterman sums shows that (\ref{off-diagonal-intermediate}) is $\ll_\eps X^{1+\eps}/K$ when $\eta v\neq u[c,d]^2/c^2$. 

When $\eta v=u[c,d]^2/c^2$ we use Proposition \ref{twisted-Kloosterman-sum} to deduce that the main part of the first summand in the off-diagonal is
\begin{align}\label{simplified-off-diagonal}
&-\sqrt{2\pi} K\sumflat_d\,\phi\left(\frac{|d|}X\right)(-1)^{\#\{p|d\}}\sum_{u|n}c_n(u)\frac{\chi_d(nu)}{\sqrt u} \nonumber \\
&\qquad\times\sum_{v|u}\Im\left(e^{-\pi i/2}\sum_{(j,d)=1}\frac 1j\sum_{\substack{c=1\\
d|c}}^\infty \frac{\varphi(c)}{c\cdot\varphi(|d|)}\prod_{\substack{p|d\\
p|\frac cd}}(1-p)\int\limits_{\mathbb R} z^{-1}W_K^{(2)}\left(\frac{zuj}{|d|v^2},\frac{zj}{|d|},\frac{K^2cv}{8\pi zu}\right)\,\mathrm d z\right).
\end{align}
We extend the definition of $W(zuj/K|d|v^2,zj/K|d|,K^2cv/8\pi zu)$ to all real numbers $z$ by setting\\ $W(zuj/K|d|v^2,zj/K|d|,K^2cv/8\pi zu)=0$ for $z\leq 0$. Note that the resulting function is still smooth. Using the approximation (\ref{new-weight-function}) and Poisson summation we see that the integral in (\ref{simplified-off-diagonal}) equals, up to an error $\ll K^{-1}$, 
\[\int\limits_{\mathbb R} z^{-1}W\left(\frac{zuj}{K|d|v^2},\frac {zj}{K|d|},\frac{K^2cv}{8\pi zu}\right)\,\mathrm d z=\sum_{h=1}^\infty h^{-1}W\left(\frac{huj}{K|d|v^2},\frac {hj}{K|d|},\frac{K^2vc}{8\pi hu}\right)+O\left(K^{-100}\right).
\]

By Mellin inversion we have, for any $\eps>0$,
\begin{align*}
&W\left(\frac{huj}{K|d|v^2},\frac {hj}{K|d|},\frac{K^2vc}{8\pi hu}\right)\\
&=\frac1{(2\pi i)^2}\int\limits_{(1+\eps)}\int\limits_{(1+\eps)}(2\pi)^{-x-y}e^{x^2+y^2}\left(\frac{huj}{K|d|v^2}\right)^{-x}\left(\frac {hj}{K|d|}\right)^{-y}\hbar_{x+y}\left(\frac{K^2vc}{8\pi hu}\right)\,\frac{\mathrm d x\,\mathrm d y}{xy}\\
&=\frac1{(2\pi i)^3}\int\limits_{(1+\eps)}\int\limits_{(1+\eps)}\int\limits_{(1+\varepsilon)}(2\pi)^{-x-y}e^{x^2+y^2}\left(\frac{huj}{K|d|v^2}\right)^{-x}\left(\frac {hj}{K|d|}\right)^{-y}\left(\frac{8\pi hu}{K^2vc}\right)^z\widetilde\hbar_{x+y}(z)\,\mathrm d z\,\frac{\mathrm d x\,\mathrm d y}{xy},
\end{align*}
where we have used (\ref{Mellin-for-h}) in the final step. 

One also easily computes, for $\Re(z)>1$,
\begin{align*}
\sum_{\substack{c=1\\
d|c}}^\infty \frac{\varphi(c)}{c^{1+z}\cdot\varphi(|d|)}\prod_{\substack{p|d\\
p|\frac cd}}(1-p)&=\sum_{c=1}^\infty\frac{\varphi(c)}{c^{1+z}}\cdot\frac1{|d|^{1+z}}\prod_{\substack{p|d\\
p|c}}(-p) \\
&=\frac1{|d|^{1+z}}\prod_p\left(1+\sum_{j=1}^\infty\frac{\varphi(p^j)}{p^{j(1+z)}}\prod_{\substack{q|d\\
q|p^j}}(-q)\right)\\
&=\frac1{|d|^{1+z}}\prod_{p|d}\left(1+\sum_{j=1}^\infty\frac{\varphi(p^j)}{p^{j(1+z)}}(-p)\right)\prod_{p\nmid d}\left(1+\sum_{j=1}^\infty\frac{\varphi(p^j)}{p^{j(1+z)}}\right)\\
&=\frac1{|d|^{1+z}}\prod_{p|d}\frac{1+\sum_{j=1}^\infty\frac{\varphi(p^j)}{p^{j(1+z)}}(-p)}{1+\sum_{j=1}^\infty\frac{\varphi(p^j)}{p^{j(1+z)}}}\prod_p\left(1+\sum_{j=1}^\infty\frac{\varphi(p^j)}{p^{j(1+z)}}\right)\\
&=\frac{\zeta(z)}{\zeta(1+z)}\cdot |d|^{-z}\prod_{p|d}\left(\frac{p^{z}-p}{p(p^{z}-1)+p-1}\right),
\end{align*}
where in the last step we have used the fact that $d$ is squarefree. 

Substituting these into (\ref{simplified-off-diagonal}) leads us to consider, up to an error term $\ll XK^\eps$, the expression
\begin{align}\label{zeta-int}
&\sqrt{2\pi}K\sumflat_d\,\phi\left(\frac{|d|}X\right)(-1)^{\#\{p|d\}}\sum_{u|n}\frac{c_n(u)}{\sqrt u}\chi_d(nu) \nonumber \\
& \qquad\times\Im\Bigg(\frac1{(2\pi i)^3}\int\limits_{(1+\eps)}\int\limits_{(1+\eps)}\int\limits_{(1+\varepsilon)}(2\pi)^{-x-y}e^{x^2+y^2}u^{z-x}(8\pi)^zK^{x+y-2z}|d|^{x+y-z}\sum_{v|u}v^{2x-z}\widetilde\hbar_{x+y}(z) \nonumber\\
&\qquad\qquad\times\left(\sum_{(j,d)=1}\frac1{j^{1+x+y}}\right)\left(\sum_{h=1}^\infty\frac1{h^{1+x+y-z}}\right)\left(\sum_{c=1}^\infty\frac{\varphi(c)}{c^{1+z}}\cdot\frac1d\prod_{\substack{p|d\\
p|c}}(-p)\right)\,\mathrm d z\frac{\mathrm d x\,\mathrm d y}{xy}\Bigg) \nonumber\\
&=\sqrt{2\pi}K\sumflat_d\,\phi\left(\frac{|d|}X\right)(-1)^{\#\{p|d\}}\sum_{u|n}\frac{c_n(u)}{\sqrt u}\chi_d(nu)\nonumber \\
&\qquad\times\Im\Bigg(\frac1{(2\pi i)^3}\int\limits_{(1+\eps)}\int\limits_{(1+\eps)}\int\limits_{(1+\varepsilon)} (2\pi)^{-x-y}e^{x^2+y^2}u^{z-x}(8\pi)^zK^{x+y-2z}|d|^{x+y-z}\sum_{v|u}v^{2x-z}\widetilde\hbar_{x+y}(z) \nonumber \\
&\qquad\times\zeta(1+x+y)\frac{\zeta(z)}{\zeta(1+z)}\prod_{p|d}\left(\left(1-p^{-1-x-y}\right)\frac{p^{z}-p}{p(p^{z}-1)+p-1}\right)\zeta(1+x+y-z)\,\mathrm d z\,\frac{\mathrm d x\,\mathrm d y}{xy}\Bigg).
\end{align}

\noindent But this expression is clearly $\ll XK^{1/2}$ by shifting the $z$-integral, say, to the line $\Re(z)=2$ (recall Lemma \ref{Mellin-transform-lemma} and remarks after it), concluding the treatment of the first term in (\ref{off-diagonal-sums}). 

Finally, let us focus on the second term in (\ref{off-diagonal-sums}). By re-parametrising it is given by 
\begin{align*}
&4\pi \sumflat_d\,\phi\left(\frac{|d|}X\right)\text{sgn}(d)\chi_d(n)\sum_{u|n}c_n(u)\sum_{\substack{j=1\\(j,d)=1}}^\infty\frac1j\sum_{c=1}^\infty\frac1c\sum_{m_1=1}^\infty\sum_{m_2=1}^\infty\frac{\chi_d(m_1m_2)S(m_1m_2,u;c)}{\sqrt{m_1m_2}}\\
&\qquad\qquad\qquad\times\sum_{k\in\Z}h\left(\frac{2k-1}K\right)V_k\left(\frac{jm_1}{|d|}\right)V_k\left(\frac{jm_2}{|d|}\right)J_{2k-1}\left(\frac{4\pi\sqrt{m_1m_2u}}{c}\right).
\end{align*}

Using (\ref{J-Bessel-avg-III}) the $k$-sum is given by
\[
h\left(\frac{4\pi\sqrt{m_1m_2u}}{cK}\right)V_{\frac12+\frac{2\pi\sqrt{m_1m_2u}}{c}}\left(\frac{jm_1}{|d|}\right)V_{\frac12+\frac{2\pi\sqrt{m_1m_2u}}{c}}\left(\frac{jm_2}{|d|}\right)
\]
up to an error term 
\begin{align*}
&\ll_\eps XK^{-2+\eps}\sum_{j=1}^\infty\frac1j\sum_{m\ll XK^{q+\eps/j}}\sum_{n\ll XK^{q+\eps/j}}\frac1{\sqrt{mn}}\sum_{c\ll \sqrt{mnu}/K}c^{-1/2+\eps}\sum_{u|n}\sqrt u|c_n(u)|\\
&\ll_\eps \frac{X^{3/2}}{K^{1-\eps}}
\end{align*}
using the Weil bound. 

Let us concentrate on the main term. By splitting $m_1$- and $m_2$-sums into congruence classes modulo $[c,d]$ the sum we are interested in is given by 
\begin{align*}
&4\pi \sumflat_d\,\phi\left(\frac{|d|}X\right)\text{sgn}(d)\chi_d(n)\sum_{u|n}c_n(u)\sum_{\substack{j=1\\(j,d)=1}}^\infty\frac1j\sum_{c=1}^\infty\frac1c\sum_{x\,([c,d])}\sum_{w\,([c,d])}\chi_d(xw)S(xw,u;c)\\
&\times\sum_{m_1\equiv x\,([c,d])}\sum_{m_2\equiv w\,([c,d])}\frac1{\sqrt{m_1m_2}}h\left(\frac{4\pi\sqrt{m_1m_2u}}{cK}\right)V_{\frac12+\frac{2\pi\sqrt{m_1m_2u}}{c}}\left(\frac{jm_1}{|d|}\right)V_{\frac12+\frac{2\pi\sqrt{m_1m_2u}}{c}}\left(\frac{jm_2}{|d|}\right).
\end{align*}
Applying Poisson summation to both $m_1$- and $m_2$-sums gives
\begin{align*}
&\sum_{m_1\equiv x\,([c,d])}\sum_{m_2\equiv w\,([c,d])}\frac1{\sqrt{m_1m_2}}h\left(\frac{4\pi\sqrt{m_1m_2u}}{cK}\right)V_{\frac12+\frac{2\pi\sqrt{m_1m_2u}}{c}}\left(\frac{jm_1}{|d|}\right)V_{\frac12+\frac{2\pi\sqrt{m_1m_2u}}{c}}\left(\frac{jm_2}{|d|}\right)\\
&=\frac1{[c,d]^2}\sum_{\eta\in\Z}\sum_{v\in\Z} e\left(\frac{xv+w\eta}{[c,d]}\right)\\
&\qquad\qquad\qquad\times\int\limits_0^\infty\int\limits_0^\infty h\left(\frac{4\pi\sqrt{s_1s_2u}}{cK}\right)V_{\frac12+\frac{2\pi\sqrt{s_1s_2u}}{c}}\left(\frac{js_1}{|d|}\right)V_{\frac12+\frac{2\pi\sqrt{s_1s_2u}}{c}}\left(\frac{js_2}{|d|}\right)e\left(\frac{-(vs_1+\eta s_2)}{[c,d]}\right)\frac{\mathrm d s_1 \mathrm d s_2}{\sqrt{s_1s_2}}.
\end{align*}
As the function $h$ is compactly supported, we have $s_1s_2\asymp K^2c^2/u$. Using this and (\ref{der-bounds}) we see by repeated integration by parts that the double integral is negligible unless $v=\eta=0$ as $X\ll\sqrt K$. Thus, up to a negligible error, the sum we are interested in equals
\begin{align*}
&4\pi \sumflat_d\,\phi\left(\frac{|d|}X\right)\text{sgn}(d)\chi_d(n)\sum_{u|n}c_n(u)\sum_{\substack{j=1\\(j,d)=1}}^\infty\frac1j\sum_{c=1}^\infty\frac1c\sum_{x\,([c,d])}\sum_{w\,([c,d])}\chi_d(xw)S(xw,u;c)\\
&\qquad\qquad\times\int\limits_0^\infty\int\limits_0^\infty h\left(\frac{4\pi\sqrt{s_1s_2u}}{cK}\right)V_{\frac12+\frac{2\pi\sqrt{s_1s_2u}}{c}}\left(\frac{js_1}{|d|}\right)V_{\frac12+\frac{2\pi\sqrt{s_1s_2u}}{c}}\left(\frac{js_2}{|d|}\right)\frac{\mathrm d s_1 \mathrm d s_2}{\sqrt{s_1s_2}}.
\end{align*}
Using the part $(2)$ of Proposition \ref{twisted-Kloosterman-sum} and re-parametrising $r=c/|d|$ this equals
\begin{align}\label{intermediate-term-2}
&2\pi \sumflat_d\,\phi\left(\frac{|d|}X\right)\varphi(|d|)\sum_{u|n}c_n(u)\chi_d(nu)\sum_{\substack{j=1\\(j,d)=1}}^\infty\frac1j\sum_{\substack{r=1\\
(r,d)=1}}^\infty\frac{\Xi_r(u)}{|d|^2r^2}\\
&\qquad\qquad\qquad\times\int\limits_0^\infty\int\limits_0^\infty h\left(\frac{4\pi\sqrt{s_1s_2u}}{|d|rK}\right)V_{\frac12+\frac{2\pi\sqrt{s_1s_2u}}{|d|r}}\left(\frac{js_1}{|d|}\right)V_{\frac12+\frac{2\pi\sqrt{s_1s_2u}}{|d|r}}\left(\frac{js_2}{|d|}\right)\frac{\mathrm d s_1 \mathrm d s_2}{\sqrt{s_1s_2}}. \nonumber
\end{align}
We make the change of variables
\begin{align*}
\xi_1=\frac{4\pi\sqrt{s_1s_2u}}{|d|rK} \quad \text{and} \quad \xi_2=\sqrt{\frac{s_1}{s_2}}
\end{align*}
so that 
\begin{align*}
s_1=\frac{|d|rK\xi_1\xi_2}{4\pi\sqrt u} \quad \text{and} \quad s_2=\frac{|d|rK\xi_1}{4\pi\sqrt u \xi_2}.
\end{align*}
Calculating the Jacobian we see that 
\[
\frac{\mathrm d s_1\,\mathrm d s_2}{\sqrt{s_1s_2}}=\frac{|d|rK}{2\pi \sqrt u}\mathrm d \xi_1\frac{\mathrm d \xi_2}{\xi_2}
\]
and thus (\ref{intermediate-term-2}) can be written in the form
\begin{align}\label{intermediate-form}
&K\sumflat_d\,\phi\left(\frac{|d|}X\right)\frac{\varphi(|d|)}{|d|}\sum_{u|n}\frac{c_n(u)}{\sqrt u}\chi_d(nu)\sum_{\substack{j=1\\(j,d)=1}}^\infty\frac1j\sum_{\substack{r=1\\
(r,d)=1}}^\infty\frac{\Xi_r(u)}{r}\\
&\qquad\qquad\qquad\times\int\limits_0^\infty\int\limits_0^\infty h\left(\xi_1\right)V_{\frac{K\xi_1+1}2}\left(\frac{jrK\xi_1\xi_2}{4\pi\sqrt u}\right)V_{\frac{K\xi_1+1}2}\left(\frac{jrK\xi_1}{4\pi\sqrt u\xi_2}\right)\mathrm d \xi_1\frac{\mathrm d \xi_2}{\xi_2}. \nonumber
\end{align}
By applying Stirling's formula in the form (see (\ref{Stirling}))
\[
\frac{\Gamma\left(\frac{K\xi+1}2+s\right)}{\Gamma\left(\frac{K\xi+1}2\right)}=\left(\frac{K\xi}2\right)^s\left(1+O\left(\frac{|s|^2}K\right)\right)
\]
we get that 
\[
V_{\frac{K\xi_1+1}2}\left(\frac{jrK\xi_1\xi_2}{4\pi\sqrt u}\right)=\frac1{2\pi i}\int\limits_{(2)}e^{s^2}\left(\frac{jr\xi_2}{\sqrt u}\right)^{-s}\frac{\mathrm d s}s+O_B\left(K^{-1}\left(1+\frac{jr\xi_2}{\sqrt u}\right)^{-B} \right)
\]
for any $B\geq 1$, and similarly
\[
V_{\frac{K\xi_1+1}2}\left(\frac{jrK\xi_1}{4\pi\sqrt u\xi_2}\right)=\frac1{2\pi i}\int\limits_{(2)}e^{s^2}\left(\frac{jr}{\sqrt u \xi_2}\right)^{-s}\frac{\mathrm d s}s+O_B\left(K^{-1}\left(1+\frac{jr}{\sqrt u \xi_2}\right)^{-B} \right).
\]
These lead to the estimate
\begin{align*}
&\int\limits_0^\infty\int\limits_0^\infty h\left(\xi_1\right)V_{\frac{K\xi_1+1}2}\left(\frac{jrK\xi_1\xi_2}{4\pi\sqrt u}\right)V_{\frac{K\xi_1+1}2}\left(\frac{jrK\xi_1}{4\pi\sqrt u\xi_2}\right)\mathrm d \xi_1\frac{\mathrm d \xi_2}{\xi_2}\\
&\qquad\qquad\qquad=\widehat h(0)\int\limits_0^\infty W\left(\frac{jr\xi_2}{\sqrt u}\right)W\left(\frac{jr}{\sqrt u \xi_2}\right)\frac{\mathrm d \xi_2}{\xi_2}+O_B\left(K^{-1}(jr)^{-B}\right)
\end{align*}
for any $B\geq 1$, where we have defined 
\[
W(z):=\frac1{2\pi i}\int\limits_{(2)}e^{s^2}z^{-s}\frac{\mathrm d s}{s}. 
\]
Next we apply Parseval's theorem for the Mellin transform in the form
\[
\int\limits_0^\infty f(t)g(t)\,\mathrm d t=\frac1{2\pi i}\int\limits_{(2)}\widetilde f(s)\widetilde g(1-s)\,\mathrm d s. 
\]
This gives 
\[
\int\limits_0^\infty W\left(\frac{jr\xi_2}{\sqrt u}\right)W\left(\frac{jr}{\sqrt u \xi_2}\right)\frac{\mathrm d \xi_2}{\xi_2}=\frac1{2\pi i}\int\limits_{(2)}\left(\frac{j^2r^2}u\right)^{-s}e^{2s^2}\frac{\mathrm d s}{s^2}
\]
as the Mellin transform of the function $\xi\mapsto W(jr\xi/\sqrt u)$ is $s\mapsto (jr/\sqrt u)^{-s}e^{s^2}/s$ and the Mellin transform of the function $\xi\mapsto \xi^{-1}W(jr/\xi\sqrt u)$ is $s\mapsto (jr/\sqrt u)^{s-1}e^{(1-s)^2}/(1-s)$.

Substituting these into (\ref{intermediate-form}) gives the main contribution
\begin{align}\label{intermediate-form2}
K\widehat h(0)\frac{\varphi(|d|)}{|d|}\sum_{u|n}\frac{c_n(u)}{\sqrt u}\cdot\chi_d(nu)\cdot\frac1{2\pi i}\int\limits_{(2)}e^{2s^2}u^s\sum_{\substack{j=1\\
(j,d)=1}}^\infty\frac1{j^{1+2s}}\sum_{\substack{r=1\\
(r,d)=1}}^\infty\frac{\Xi_r(u)}{r^{1+2s}}\frac{\mathrm d s}{s^2}.
\end{align}
It is well-known that
\[
\sum_{r=1}^\infty\frac{\Xi_r(u)}{r^{1+2s}}=\frac{\sigma_{2s}(u)}{u^{2s}\zeta(1+2s)}
\]
for $\Re(s)>0$, which gives
\[
\sum_{\substack{r=1\\
(r,d)=1}}^\infty\frac{\Xi_r(u)}{r^{1+2s}}=\frac{\sigma_{2s}(u)}{u^{2s}\zeta(1+2s)\prod_{p|d}\left(1-p^{-1-2s}\right)}.
\]
Thus
\[
\sum_{\substack{j=1\\
(j,d)=1}}^\infty\frac1{j^{1+2s}}\sum_{\substack{r=1\\
(r,d)=1}}^\infty\frac{\Xi_r(u)}{r^{1+2s}}=\frac{\sigma_{2s}(u)}{u^{2s}}. 
\]
Note also that
\[
u^{-2s}\sigma_{2s}(u)=u^{-2s}\sum_{v|u}v^{2s}.
\]
Substituting these into (\ref{intermediate-form2}) gives
\[
K\widehat h(0)\frac{\varphi(|d|)}{|d|}\sum_{u|n}\frac{c_n(u)}{\sqrt u}\chi_d(nu)\cdot\frac1{2\pi i}\int\limits_{(2)}e^{2s^2}u^{-s}\sum_{v|u}v^{2s}\frac{\mathrm d s}{s^2},
\]
which is the desired secondary main term. This finishes the proof.  
\end{proof}
Let us define another random mollifier by 
\begin{align*}
M_2(X;d)&:=(\log K)\sum_{n\leq K^{4\delta_0}}\frac{h_2(n)\lambda(n)\chi_d(n)}{2^{\Omega(n)}\sqrt n}\sum_{u|n}c_n(u)X(u) \\
&=(\log K)\prod_{j=0}^J \underbrace{\sum_{\substack{p|n\Rightarrow p\in I_j\\
\Omega(n)\leq 4\ell_j }}\frac{\lambda(n)\chi_d(n)\nu_4(n;\ell_j)}{2^{\Omega(n)}\sqrt n}\sum_{u|n}c_n(u)X(u)}_{=: M_{2,j}(X;d)}. 
\end{align*}
This is a random counterpart for $M_g(d)^4$ (compare to (\ref{Mollifier-power}) with $\ell=2$).

An immediate consequence of the previous lemma is the evaluation of the mollified second moment.
\begin{corollary}\label{mollified-moment-connection}
Suppose that $X\ll\sqrt K$. Then we have
\begin{align}\label{off-diag-main}
&\sumflat_{d}\,\phi\left(\frac{|d|}X\right)\sum_{k\in\Z}h\left(\frac{2k-1}K\right)\sum_{f\in B_k}\omega_f L\left(\frac12,f\otimes\chi_d\right)^2M_f(d)^4 \nonumber\\
&=\sumflat_{d}\,\phi\left(\frac{|d|}X\right)\sum_{k\in\Z}h\left(\frac{2k-1}K\right)\mathbb E\left(L(X;d,k)M_2(X;d)\right) \nonumber\\
&+K(\log K)\widehat h(0)\sum_{n\leq K^{4\delta_0}}\frac{h_2(n)\lambda(n)}{2^{\Omega(n)}\sqrt n}\sumflat_{d}\,\frac{\varphi(|d|)}{|d|}\phi\left(\frac{|d|}X\right)\sum_{u|n}\frac{c_n(u)}{\sqrt u}\chi_d(nu)\frac1{2\pi i}\int\limits_{(1+\eps)}\frac{e^{2x^2}}{x^2}u^{-x}\sum_{v|u}v^{2x}\,\mathrm d x \\&\qquad\qquad\qquad\qquad\qquad\qquad+O_\eps\left(X^{1+2\delta_0+\eps}K^{1/2}\right). \nonumber
\end{align}
\end{corollary}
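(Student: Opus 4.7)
The plan is to deduce this corollary directly from Lemma \ref{twisted-second-moment-averaged} by expanding the mollifier $M_f(d)^4$, applying the lemma termwise in the resulting Dirichlet polynomial, and then recognising the diagonal contribution as the expectation of the random mollifier $M_2(X;d)$.

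First I would apply (\ref{Mollifier-power}) with $\ell=2$ to write
\[
M_f(d)^4=(\log K)\sum_{n\le K^{4\delta_0}}\frac{h_2(n)a_f(n)\lambda(n)\chi_d(n)}{2^{\Omega(n)}\sqrt n},
\]
substitute this into the left-hand side of (\ref{off-diag-main}), and swap the $n$-sum to the outside. For each fixed $n\le K^{4\delta_0}$ the remaining inner sum over $d$, $k$, and $f\in B_k$ is then exactly the quantity treated by Lemma \ref{twisted-second-moment-averaged}.

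Next I would reassemble the three pieces produced by that lemma. For the diagonal part, after summing
\[
\mathbb E\!\left(L(X;d,k)\sum_{u|n}c_n(u)X(u)\right)
\]
against the coefficients $(\log K)h_2(n)\lambda(n)\chi_d(n)/(2^{\Omega(n)}\sqrt n)$ and pulling the finite $n$-sum inside $\mathbb E(\cdot)$ by linearity, the resulting random Dirichlet polynomial is, by the very definition of $M_2(X;d)$, precisely $M_2(X;d)$; this yields the first term on the right-hand side of (\ref{off-diag-main}). The off-diagonal piece from the lemma is simply reindexed and appears as the second term. Since the error $O(X^{3/4}K^{1+\eps})$ in Lemma \ref{twisted-second-moment-averaged} is uniform in $n\leq K^{4\delta_0}$, and $h_2(n)\ll_\eps n^\eps$, the trivial bound
\[
\sum_{n\le K^{4\delta_0}}\frac{|h_2(n)|}{2^{\Omega(n)}\sqrt n}\ll K^{\eps}
\]
(after a harmless renaming of $\eps$, and using that $\delta_0$ is small) keeps the aggregated error of the same order.

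The only real subtlety is conceptual rather than computational: one must resist absorbing the off-diagonal term on the right-hand side of (\ref{off-diag-main}) into the error, even though at this stage it looks smaller than the diagonal. It is kept explicit because, as the heuristic after (\ref{sum-intro}) anticipates, the pole of the Mellin integrand at $s_2=-s_1$ that arises when the expectation in the diagonal term is later unfolded via Mellin inversion will cancel precisely against this off-diagonal contribution. The present corollary is therefore essentially a bookkeeping step that sets up the clean decomposition in which that cancellation can subsequently be exhibited in the following sections.
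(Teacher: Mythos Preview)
Your proposal is correct and matches the paper's own argument: the corollary is stated there simply as ``an immediate consequence of the previous lemma,'' and the deduction you spell out --- expanding $M_f(d)^4$ via (\ref{Mollifier-power}) with $\ell=2$, applying Lemma \ref{twisted-second-moment-averaged} for each odd $n\le K^{4\delta_0}$ (oddness is guaranteed since $c>2$), and then reassembling the diagonal into $\mathbb E(L(X;d,k)M_2(X;d))$ by the definition of $M_2(X;d)$ --- is exactly the intended one-line derivation. Your handling of the aggregated error and your remark on why the off-diagonal is retained explicitly are both accurate.
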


\section{A random computation}

\noindent Let us now define for $\min\{\Re(s_1),\Re(s_2)\}>1$,
\[
L(s_1,s_2,X;d)=\sum_{m_1=1}^\infty\sum_{m_2=1}^\infty\frac{X(m_1)X(m_2)\chi_d(m_1m_2)}{m_1^{s_1}m_2^{s_2}}=\prod_p\underbrace{\sum_{j_1=0}^\infty\sum_{j_2=0}^\infty\frac{X(p^{j_1})X(p^{j_2})\chi_d(p^{j_1+j_2})}{p^{j_1s_1+j_2s_2}}}_{=:L_p(s_1,s_2,X;d)}.
\] 
Our next goal is to estimate the contribution of primes of different sizes.

\subsection{Contribution of the primes with $K^{\theta_0}<p\leq K^{\theta_J}$}

Here we wish to estimate the expectation
\[ 
\mathbb E\left(\prod_{K^{\theta_0}<p<K^{\theta_J}}L_p\left(s_1+\frac12,s_2+\frac12;X,d\right)\prod_{j=1}^J M_{2,j}(X;d)\right).\]
This is needed for shifting contours in the proof of Lemma \ref{random-averaged}. Towards estimating this, we consider for every $1\leq j\leq J$ the term 
\begin{align}\label{exp-1}
\mathbb E\left(M_{2,j}(X;d)\prod_{p\in I_j}L_p\left(s_1+\frac12,s_2+\frac12;X,d\right)\right).
\end{align}
Let us start by expanding
\begin{align*}
&M_{2,j}(X;d)\prod_{p\in I_j}L_p\left(s_1+\frac12,s_2+\frac12;X,d\right)\\
&=\sum_{p|m_1m_2\Rightarrow p\in I_j}\frac{\chi_d(m_1m_2)}{m_1^{s_1+1/2}m_2^{s_2+1/2}}X(m_1)X(m_2)\sum_{\substack{p|n\Rightarrow p\in I_j\\
\Omega(n)\leq 4\ell_j}}\frac{\lambda(n)\chi_d(n)\nu_4(n;\ell_j)}{2^{\Omega(n)}\sqrt n}\sum_{u|n}c_n(u)X(u). 
\end{align*}
By taking the expectation and using the Hecke relations for $X(m)$ we see that (\ref{exp-1}) equals 
\begin{align}\label{after-exp}
&\sum_{p|m_1m_2\Rightarrow p\in I_j}\frac{\chi_d(m_1m_2)}{m_1^{s_1+1/2}m_2^{s_2+1/2}}\sum_{\substack{p|n\Rightarrow p\in I_j\\
\Omega(n)\leq 4\ell_j}}\frac{\lambda(n)\chi_d(n)\nu_4(n;\ell_j)}{2^{\Omega(n)}\sqrt n}\sum_{u|n}c_n(u)\sum_{h|(m_1,m_2)}1_{m_1m_2/h^2=u}\nonumber\\
&=\sum_{\substack{(h,d)=1\\
p|h\Rightarrow p\in I_j}}\frac1{h^{1+s_1+s_2}}\sum_{p|m_1m_2\Rightarrow p\in I_j}\frac{\chi_d(m_1m_2)}{m_1^{s_1+1/2}m_2^{s_2+1/2}}\sum_{\substack{p|n\Rightarrow p\in I_j\\
\Omega(n)\leq 4\ell_j\\
m_1m_2|n}}\frac{\lambda(n)\chi_d(n)\nu_4(n;\ell_j)}{2^{\Omega(n)}\sqrt n}c_n(m_1m_2).
\end{align}
For any $r>0$ and $n,\ell\in\mathbb N$,
\[ 
1_{\Omega(n)=\ell}=\frac1{2\pi i}\int\limits_{|z|=r}z^{\Omega(n)-\ell}\frac{\mathrm d z}z,\]
so that for $r\neq 1$ we have
\[ 
1_{\Omega(n)\leq\ell_j}=\frac1{2\pi i}\int\limits_{|z|=r}z^{\Omega(n)}\frac{1-z^{-\ell_j-1}}{1-z^{-1}}\frac{\mathrm d z}z.\]
Therefore, for $1<r\leq 2$ the right-hand side of (\ref{after-exp}) is given by 
\begin{align}\label{Integral-over-circle}
\frac1{2\pi i}\int\limits_{|z|=r}\frac{1-z^{-4\ell_j-1}}{1-z^{-1}}\Sigma(z)\frac{\mathrm d z}z,
\end{align}
where 
\[ 
\Sigma(z):=\sum_{\substack{(h,d)=1\\
p|h\Rightarrow p\in I_j}}\frac1{h^{1+s_1+s_2}}\sum_{p|m_1m_2\Rightarrow p\in I_j}\frac{\chi_d(m_1m_2)}{m_1^{s_1+1/2}m_2^{s_2+1/2}}\sum_{\substack{p|n\Rightarrow p\in I_j\\
m_1m_2|n}}\frac{z^{\Omega(n)}\lambda(n)\chi_d(n)\nu_4(n;\ell_j)}{2^{\Omega(n)}\sqrt n}c_n(m_1m_2).
\]
After changing the order of summation, the inner double sum can be written as an Euler product
\begin{align*}
&\sum_{p|n\Rightarrow p\in I_j}\frac{z^{\Omega(n)}\lambda(n)\chi_d(n)\nu_4(n)}{2^{\Omega(n)}\sqrt n}\sum_{\substack{m_1,m_2\\
m_1m_2|n}}\frac{\chi_d(m_1m_2)}{m_1^{s_1+1/2}m_2^{s_2+1/2}}c_n(m_1m_2)\\
&=\prod_{\substack{p\in I_j\\
p\nmid d}}\left(1-\frac{z}{\sqrt p}\left(\frac1{p^{s_1+1/2}}+\frac1{p^{s_2+1/2}}\right)+\frac{3z^2}{2p}\left(1+\frac1{p^{1+2s_1}}+\frac1{p^{1+2s_2}}+\frac1{p^{1+s_1+s_2}}\right)+O\left(\frac1{p^{3/2+\min\{\Re(s_1),\Re(s_2)\}}}\right)\right)
\end{align*} 
Recalling the definition of $\theta_j$, a standard application of the prime number theorem shows that (for $j\neq 0$) 
\[ 
\sum_{p\in I_j}\frac1{p^{1+s}}\ll 1
\]
uniformly in $j$ for, say, $\Re(s)\geq -(\log\log K)^2/\log K$ and $|\Im(s)|\leq e^{\sqrt{\log K}}$. From this it is easily seen that $|\Sigma(z)|\ll 1$ uniformly for $|z|\leq 2$ when $\min\{\Re(s_1),\Re(s_2)\}\geq -(\log\log K)^2/\log K$ and $\max\{|\Im(s_1)|,|\Im(s_2)|\}\leq e^{\sqrt{\log K}}$ as
\[
\sum_{\substack{(h,d)=1\\
p|h\Rightarrow p\in I_j}}\frac1{h^{1+s_1+s_2}}=\prod_{\substack{p\in I_j\\
p\nmid d}}\left(1-\frac1{p^{1+s_1+s_2}}\right)^{-1}.
 \]
Applying this bound for $\Sigma(z)$ in (\ref{Integral-over-circle}) we see that 
\[ 
\mathbb E\left(M_{2,j}(X;d)\prod_{p\in I_j}L_p\left(s_1+\frac12,s_2+\frac12;X,d\right)\right)\ll 1.
\]
Thus we have that 
\begin{align}\label{loglog-bound}
\mathbb E\left(\prod_{K^{\theta_0}<p<K^{\theta_J}}L_p\left(s_1+\frac12,s_2+\frac12;X,d\right)\prod_{j=1}^J M_{2,j}(X;d)\right)\ll (\log\log K)^{O(1)}
\end{align}
for $\min\{\Re(s_1),\Re(s_2)\}\geq -(\log\log K)^2/\log K$ and $\max\{|\Im(s_1)|,|\Im(s_2)|\}\leq e^{\sqrt{\log K}}$ by applying the above bound for each $1\leq j\leq J$ and recalling that $J\asymp\log\log\log K$. 

\subsection{Contribution of the small primes} Next we need to understand the contribution of the primes with $c\leq p\leq K^{\theta_0}$. This involves understanding the interaction between $L(s_1+1/2,s_2+1/2;X,d)$ and $M_{2,0}(X;d)$. A key point is that since $M_{2,0}(X;d)$ consists of relatively small primes we can express it in terms of an Euler product with negligible loss since $\ell_0$ is large. This allows us to simplify our later analysis by reducing the problem to understanding the contribution from each prime $p\in I_0$ individually. Let 
\begin{align}\label{modified-zero}
\widetilde M_{2,0}(X;d):=\sum_{p|n\Rightarrow p\in I_0}\frac{\lambda(n)\chi_d(n)\nu_4(n;\ell_0)}{2^{\Omega(n)}\sqrt n}\sum_{u|n}c_n(u)X(u).
\end{align}
We have the following result.
\begin{lemma}\label{small-primes}
For $\min\{\Re(s_1),\Re(s_2)\}\geq -(\log\log K)^2/\log K$ we have that 
\begin{align*}
&\mathbb E\left(M_{2,0}(X;d)\prod_{p\in I_0}L_p\left(s_1+\frac12,s_2+\frac12;X,d\right)\right)\\
&\qquad=\mathbb E\left(\widetilde M_{2,0}(X;d)\prod_{p\in I_0}L_p\left(s_1+\frac12,s_2+\frac12;X,d\right)\right)+O\left((\log K)^{-10}\right).
\end{align*}
\end{lemma}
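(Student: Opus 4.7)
The lemma expresses that one may replace the mollifier $M_{2,0}(X;d)$, which carries the restriction encoded in $\nu_4(n;\ell_0)$ (equivalently $\Omega(n)\le 4\ell_0$), by the genuinely Euler-multiplicative version $\widetilde M_{2,0}(X;d)$ in which the component-wise truncation is dropped, with an error that is super-polynomially small. The proof is a standard Rankin--Selberg-style trick: the difference is supported on integers $n$ for which the truncation is violated, which forces $\Omega(n) > \ell_0$, and one exploits the largeness of $\ell_0\asymp (\log\log K)^{15/4}$ to beat any fixed power of $\log K$.

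\textbf{Key steps.} First I would write the difference $\widetilde M_{2,0}(X;d)-M_{2,0}(X;d)$ explicitly and observe that it is supported on $n$ with $\Omega(n)>\ell_0$, since when $\Omega(n)\le\ell_0$ every decomposition $n=m_1m_2m_3m_4$ automatically satisfies $\Omega(m_s)\le\ell_0$ and the two truncations agree. Second, I would apply the Rankin-type pointwise inequality
\[
\mathbf{1}_{\Omega(n)>\ell_0}\;\le\;a^{\Omega(n)-\ell_0}
\]
for a parameter $a>1$, to bound the absolute value of the difference by $a^{-\ell_0}$ times a nonnegative Dirichlet polynomial in which the $\Omega$-restriction has been removed. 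Third, multiplying by $\prod_{p\in I_0}L_p(s_1+\tfrac12,s_2+\tfrac12;X,d)$ and taking expectation, I would use independence of the random variables $\{X(p)\}_{p\in I_0}$ together with the multiplicativity (over primes in $I_0$) of everything in sight to factor the resulting expectation as a genuine Euler product
\[
a^{-\ell_0}\,\prod_{p\in I_0}\mathcal E_p(s_1,s_2;a).
\]
Fourth, I would estimate each local factor $\mathcal E_p(s_1,s_2;a)$ by expanding in the Satake parameters, using the deterministic bound $|X(p^j)|\le j+1$, the identity $\mathbb E(X(p^a)X(p^b))=\mathbf{1}_{a=b}$, and the bound $|c_{p^k}(u)|\le 2^k$; for $p>c$ and $\min(\Re s_1,\Re s_2)\ge -(\log\log K)^2/\log K$ this gives $\mathcal E_p(s_1,s_2;a)=1+O(a^{O(1)}/p)$, and then Mertens' theorem on the product over $p\in I_0=(c,K^{\theta_0}]$ yields $\prod_{p\in I_0}\mathcal E_p\ll(\log K)^{O(a)}$. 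Fifth, taking $a=2$ gives $a^{-\ell_0}\le\exp(-c(\log\log K)^{15/4})$, which dominates any $(\log K)^{O(1)}$ factor, producing an overall bound much stronger than $(\log K)^{-10}$.

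\textbf{Main obstacle.} The delicate point is the local estimate in the third step. The real parts of $s_1$ and $s_2$ are allowed to be slightly negative, so the naive bound on $L_p(s_1+\tfrac12,s_2+\tfrac12;X,d)$ is not automatic; one must track the interaction between the factors $X(m_1)X(m_2)$ coming from the Euler factor $L_p$ and the factor $X(u)$ coming from the mollifier, and exhibit cancellation via $\mathbb E(X(m)X(n))=\mathbf{1}_{m=n}$. Handling this uniformly in the permitted range of $\Re s_i$ is the crux, but because $p>c>2$ and $|s_i|$ is tiny, the shift $p^{-s_i}$ is harmless and the argument reduces to a routine local computation at each prime.
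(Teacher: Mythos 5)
Your proposal is sound and reaches the right conclusion, but it follows a genuinely different route from the paper. The paper writes $R(X;d):=\widetilde M_{2,0}(X;d)-M_{2,0}(X;d)$ and applies the Cauchy--Schwarz inequality, $\mathbb E(|L_0R|)^2\leq\mathbb E(|L_0|^2)\,\mathbb E(|R|^2)$; it then computes the \emph{second} moment of the local Euler product (showing $\mathbb E(|L_0|^2)\ll(\log K)^{O(1)}$ via the local expectations $\mathbb E(\rho(X,p;\cdot))=0$ and $\mathbb E(\rho(X,p^2;\cdot))\ll p^{-1-2\min\{\Re(s_1),\Re(s_2)\}}$) and separately bounds $\mathbb E(|R|^2)\ll(\log K)^{O(1)}2^{-8\ell_0}$ by Rankin's trick and $c_n(u)\leq 2^{\Omega(n)}$. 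You instead propose a direct first-moment bound: expand, use independence of $\{X(p)\}_{p}$ to factor $\mathbb E(L_0\cdot X(u))$ into local factors, and apply Rankin's trick to the $n$-sum. Both arguments rest on the same two pillars (the slightly-negative real parts being harmless for $p>c$, and $a^{-c\ell_0}=\exp(-c'(\log\log K)^{15/4})$ crushing any $(\log K)^{O(1)}$), so your route works; Cauchy--Schwarz buys the paper a cleaner separation of the two objects, while your factorization avoids computing a fourth-order local moment. Two points you should tighten in a write-up. First, the Rankin inequality $1_{\Omega(n)>4\ell_0}\leq a^{\Omega(n)-4\ell_0}$ cannot be inserted into a \emph{signed} sum: you must first take absolute values of the individual expectations $\mathbb E\bigl(L_0\,X(u)\bigr)$ (after reducing $\mathbb E(X(m_1)X(m_2)X(u))$ via the Hecke relation (\ref{random-Hecke}) to the pair correlation (\ref{correlation-expectation}), since the pairwise identity alone does not handle the triple product), and only then majorize the resulting nonnegative multiplicative sum by an Euler product. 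Second, a small misreading of the definitions: $\widetilde M_{2,0}$ retains the factor $\nu_4(n;\ell_0)$ and drops only the outer condition $\Omega(n)\leq 4\ell_0$, so the difference is supported on $\Omega(n)>4\ell_0$ (stronger than the $\Omega(n)>\ell_0$ you state); this only helps you, but the justification you give for the support claim is not quite the right one.
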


\begin{proof}
Let us write
\[ 
R(X;d):=\widetilde M_{2,0}(X;d)-M_{2,0}(X;d)=\sum_{\substack{p|n\Rightarrow p\in I_0\\
\Omega(n)>4\ell_0}}\frac{\lambda(n)\chi_d(n)\nu_4(n;\ell_j)}{2^{\Omega(n)}\sqrt n}\sum_{u|n}c_n(u)X(u)
\]
and let us denote $L_0(s_1,s_2;X,d):=\prod_{p\in I_0}L_p(s_1,s_2;X,d)$. Then by the Cauchy--Schwarz inequality we have
\[ 
\mathbb E\left(\left|L_0\left(s_1+\frac12,s_2+\frac12;X,d\right)R(X;d)\right|\right)^2\leq\mathbb E\left(\left|L_0\left(s_1+\frac12,s_2+\frac12;X,d\right)\right|^2\right)\mathbb E\left(|R(X;d)|^2\right).
\]
Since $L_0(s_1,s_2;X,d)$ is a finite product, this function is analytic for $\min\{\Re(s_1),\Re(s_2)\}>0$. Let us first analyse $\mathbb E(|L_0(s_1+1/2,s_2+1/2;X,d)|^2)$. We simply compute
\begin{align*}
\mathbb E\left(\left|L_0\left(s_1+\frac12,s_2+\frac12;X,d\right)\right|^2\right)&=\prod_{p\in I_0}\mathbb E\left(\left|\sum_{j_1=0}^\infty\sum_{j_2=0}^\infty\frac{X(p^{j_1})X(p^{j_2})\chi_d(p^{j_1+j_2})}{p^{j_1(s_1+1/2)+j_2(s_2+1/2)}}\right|^2\right).
\end{align*}
Set 
\[ 
\rho(X,p^\alpha;s_1,s_2)):=\chi_d(p^\alpha)\sum_{j_1+j_2+j_3+j_4=\alpha}\frac{X(p^{j_1})X(p^{j_2})X(p^{j_3})X(p^{j_4})}{p^{(j_1+j_2)(s_1+1/2)+(j_3+j_4)(s_2+1/2)}}.
\]
An easy computation shows that 
\begin{align*}
\mathbb E(\rho(X,p;s_1,s_2)))=0 \qquad \text{and} \qquad \mathbb E(\rho(X,p^2;s_1,s_2))&=\chi_d(p^2)\left(\frac1{p^{1+2s_1}}+\frac1{p^{1+2s_2}}+\frac4{p^{1+s_1+s_2}}\right)\\
&\ll p^{-1-2\min\{\Re(s_1),\Re(s_2)\}}.
\end{align*}
Thus 
\[ 
\mathbb E\left(\left|L_0\left(s_1+\frac12,s_2+\frac12;X,d\right)\right|^2\right)=\prod_{p\in I_0}\left(1+O\left(\frac1{p^{1+2\min\{\Re(s_1),\Re(s_2)\}}}\right)\right).
\]
Using that $\min\{\Re(s_1),\Re(s_2)\}\geq -(\log\log K)^2/\log K$ we have $\frac1{p^{1+2\min\{\Re(s_1),\Re(s_2)\}}}\ll\frac1p$ for $p\in I_0$ and so the right-hand side of the previous display is $\ll (\log K)^{O(1)}$.

We next estimate $\mathbb E(|R(X;d)|^2)$, which equals
\begin{align*}
\sum_{\substack{p|n_1\Rightarrow p\in I_0\\
\Omega(n_1)>4\ell_0}}\sum_{\substack{p|n_2\Rightarrow p\in I_0\\
\Omega(n_2)>4\ell_0}}\frac{\lambda(n_1)\lambda(n_2)\chi_d(n_1n_2)\nu_4(n_1;\ell_0)\nu_4(n_2;\ell_0)}{2^{\Omega(n_1)+\Omega(n_2)}\sqrt{n_1n_2}}\sum_{u_1|n_1}\sum_{u_2|n_2}c_{n_1}(u_1)c_{n_2}(u_2)\mathbb E(X(u_1)X(u_2)). 
\end{align*}
We have $2^{\Omega(n_1)-4\ell_0}\geq 1$ for $\Omega(n_1)>4\ell_0$. Thus using the bound $c_n(u)\leq 2^{\Omega(n)}$ and estimating the Liouville function and $\chi_d$ trivially, we see that there exists a constant $C>0$ so that 
\begin{align}\label{error-estimate}
\mathbb E(|R(X;d)|^2)&\leq\frac1{2^{8\ell_0}}\sum_{p|r\Rightarrow p\in I_0}\sum_{p|n_1\Rightarrow p\in I_0}\sum_{p|n_2\Rightarrow p\in I_0}\frac{\lambda(rn_1)\lambda(rn_2)\chi_d(r^2n_1n_2)\nu(rn_1)\nu(rn_2)}{r\sqrt{n_1n_2}}c_{rn_1}(r)c_{rn_2}(r) \nonumber\\
&\ll\frac{(\log K)^{O(1)}}{2^{8\ell_0}}\sum_{p|r\Rightarrow p\in I_0}\frac{C^{\Omega(r)}}r \nonumber\\
&\ll\frac{(\log K)^{O(1)}}{2^{8\ell_0}},
\end{align}
where we have used the fact that $c$ is sufficiently large so that the sum in the middle converges. Combining the two estimates above and recalling that $\eta_1>0$ in the definition of $\ell_0$ is taken to be sufficiently large completes the proof.
\end{proof}

\subsection{Estimates for the large primes}
It remains to study the contribution of the primes $p>K^{\theta_J}$. These do not interact with our mollifier and consequently their contribution is easy to understand. Estimating these terms precisely allows us to meromorphically continue $\mathbb E(L(s_1+1/2,s_2+1/2,X;d)M_2(X;d))$ as $M_2(X;d)$ is a Dirichlet polynomial with coefficients supported on integers with prime factors $\leq K^{\theta_J}$. 

\begin{lemma}
For $\min\{\Re(s_1),\Re(s_2)\}>1/2$ we have that 
\[ 
\mathbb E\left(L_p\left(s_1+\frac12, s_2+\frac12;X,d\right)\right)=\begin{cases}
\zeta_p(1+s_1+s_2) &\text{if }p\nmid d \\
1 & \text{if }p|d
\end{cases} 
\]
\end{lemma}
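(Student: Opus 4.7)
The plan is to compute the expectation directly by unpacking the series definition of $L_p$. Under the hypothesis $\min\{\Re(s_1),\Re(s_2)\}>1/2$, the double series
\[
L_p\left(s_1+\tfrac12,s_2+\tfrac12;X,d\right)=\sum_{j_1,j_2\ge 0}\frac{X(p^{j_1})X(p^{j_2})\chi_d(p)^{j_1+j_2}}{p^{j_1(s_1+1/2)+j_2(s_2+1/2)}}
\]
converges absolutely almost surely, since $|X(p^j)|\le j+1$ by the Sato--Tate bound on traces of symmetric powers of $\mathrm{SU}(2)$-matrices, and the denominator decays geometrically. This justifies exchanging expectation and summation, giving
\[
\mathbb E\left(L_p\left(s_1+\tfrac12,s_2+\tfrac12;X,d\right)\right)=\sum_{j_1,j_2\ge 0}\frac{\chi_d(p)^{j_1+j_2}}{p^{j_1(s_1+1/2)+j_2(s_2+1/2)}}\mathbb E\bigl(X(p^{j_1})X(p^{j_2})\bigr).
\]

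Next I would invoke the orthogonality relation $\mathbb E(X(p^{j_1})X(p^{j_2}))=1_{j_1=j_2}$, which is the prime-power case of (\ref{correlation-expectation}); it follows at once from the Hecke-type identity (\ref{random-Hecke}) combined with $\mathbb E(X(m))=1_{m=1}$. This collapses the double sum to its diagonal, leaving the single geometric sum
\[
\sum_{j=0}^\infty\frac{\chi_d(p)^{2j}}{p^{j(1+s_1+s_2)}}.
\]
A short case distinction then concludes the argument: when $p\nmid d$ one has $\chi_d(p)^2=1$, so the sum equals $(1-p^{-1-s_1-s_2})^{-1}=\zeta_p(1+s_1+s_2)$; when $p\mid d$ one has $\chi_d(p)=0$, killing every term with $j\ge 1$, and only the $j=0$ contribution survives.

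The only step requiring any care is the Fubini interchange, which is entirely routine given the hypothesis on $\Re(s_1),\Re(s_2)$. Conceptually the lemma is the probabilistic mirror of the classical Euler-factor computation for $L(s,\chi_d^2)$ at an unramified prime, and all of its content is the orthogonality of the random Hecke family $\{X(p^j)\}_{j\ge 0}$ with respect to the Sato--Tate measure on $G^\sharp$.
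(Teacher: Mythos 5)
Your computation follows the paper's proof essentially verbatim: expand the local factor, use $\mathbb{E}\bigl(X(p^{j_1})X(p^{j_2})\bigr)=1_{j_1=j_2}$ to collapse to the diagonal, and sum the geometric series. The Fubini justification via $|X(p^j)|\leq j+1$ is fine (the paper omits it).

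However, there is a mismatch between your case analysis and the formula you claim to have proved, and it is worth flagging because the paper's own statement has the same defect. When $p\mid d$ you correctly observe that every term with $j\geq 1$ dies and ``only the $j=0$ contribution survives'' --- but that surviving contribution is $1$ (the $j_1=j_2=0$ term of $L_p$ as defined, which is $X(1)^2=1$), whereas the displayed right-hand side $\zeta_p(1+s_1+s_2)\cdot 1_{p\nmid d}$ equals $0$. So your (correct) computation actually yields
\[
\mathbb{E}\left(L_p\left(s_1+\tfrac12,s_2+\tfrac12;X,d\right)\right)=\begin{cases}\zeta_p(1+s_1+s_2) & \text{if }p\nmid d,\\ 1 & \text{if }p\mid d,\end{cases}
\]
i.e.\ $\zeta_p(1+s_1+s_2)^{1_{p\nmid d}}$, not $\zeta_p(1+s_1+s_2)\cdot 1_{p\nmid d}$. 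This is in fact the version the paper uses downstream: the product $\prod_{p>z}\mathbb{E}(L_p)$ is evaluated as $\prod_{p>z,\,p\nmid d}\zeta_p(1+s_1+s_2)$, which would be identically zero for most $d$ if the ramified factors really vanished. (The paper's proof starts its sums at $k_1,k_2=1$ rather than $0$, which is inconsistent with the definition of $L_p$ and is how it arrives at the literal statement.) You should either correct the statement to $\zeta_p(1+s_1+s_2)^{1_{p\nmid d}}$ or explicitly record the value $1$ in the ramified case; as written, your final sentence asserts a conclusion that your own case distinction contradicts.
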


\begin{proof}
We simply compute
\begin{align*}
\mathbb E\left(L_p\left(s_1+\frac12,s_2+\frac12;X,d\right)\right)&=\sum_{k_1=0}^\infty\sum_{k_2=0}^\infty\frac{\chi_d(p^{k_1+k_2})}{p^{k_1(s_1+1/2)+k_2(s_2+1/2)}}\mathbb E\left(X(p^{k_1})X(p^{k_2})\right)\\
&=\sum_{j=0}^\infty\frac{\chi_d(p^{2j})}{p^{(1+s_1+s_2)j}}\\
&=1+1_{p\nmid d}\cdot\sum_{j=1}^\infty\frac1{p^{(1+s_1+s_2)j}}\\
&=1+1_{p\nmid d}\cdot\left(\frac1{1-p^{-(1+s_1+s_2)}}-1\right)\\
&=1+1_{p\nmid d}\cdot\left(\zeta_p(1+s_1+s_2)-1\right),
\end{align*}
which proves the claim.
\end{proof}
\noindent Thus using the dominated convergence theorem we have for $\min\{\Re(s_1),\Re(s_2)\}>1/2$ and any $z\geq 2$ that
\begin{align*}
\mathbb E\left(\prod_{p>z}L_p\left(s_1+\frac12,s_2+\frac12;X,d\right)\right)&=\prod_{p>z}\mathbb E\left(L_p\left(s_1+\frac12,s_2+\frac12;X,d\right)\right)\\
&=\prod_{\substack{p>z\\
p\nmid d}}\zeta_p(1+s_1+s_2)\\
&=\zeta(1+s_1+s_2)\prod_{p\leq z}\zeta_p(1+s_1+s_2)^{-1}\prod_{\substack{p>z\\
p|d}}\zeta_p(1+s_1+s_2)^{-1}. 
\end{align*}
If $u$ is a natural number with all prime factors $\leq K^{\theta_J}$, this provides a meromorphic continuation for $\mathbb E(X(u)L(s_1+1/2,s_2+1/2;X,d))$ to $\Re(s_1+s_2)>-1/2$ with simple poles along the hyperplane $s_1+s_2=0$. Indeed, by choosing $z=K^{\theta_J}$ above, we have
\begin{align}\label{twisted-exp}
&\mathbb E\left(X(u)L(s_1+1/2,s_2+1/2;X,d)\right) \nonumber\\
&=\mathbb E\left(X(u)\prod_{p\leq K^{\theta_J}}L_p\left(s_1+\frac12,s_2+\frac12;X,d\right)\right)\mathbb E\left(\prod_{p>K^{\theta_J}}L_p\left(s_1+\frac12,s_2+\frac12;X,d\right)\right)\nonumber\\
&=\zeta(1+s_1+s_2)\prod_{p\leq K^{\theta_J}}\zeta_p(1+s_1+s_2)^{-1}\prod_{\substack{p>K^{\theta_J}\\
p|d}}\zeta_p(1+s_1+s_2)^{-1}\mathbb E\left(X(u)\prod_{p\leq K^{\theta_J}}L_p\left(s_1+\frac12,s_2+\frac12;X,d\right)\right).
\end{align} 
We further compute using the Hecke relations for $X(n)$ and (\ref{correlation-expectation}) that 
\begin{align}\label{twisted-exp-small}
\mathbb E\left(X(u)\prod_{p\leq K^{\theta_J}}L_p\left(s_1+\frac12,s_2+\frac12;X,d\right)\right)&=\mathbb E\left(X(u)\sum_{\substack{m_1,m_2\\
p|m_1m_2\Rightarrow p\leq K^{\theta_J}}}\frac{X(m_1)X(m_2)\chi_d(m_1m_2)}{m_1^{s_1+1/2}m_2^{s_2+1/2}}\right) \nonumber\\
&=\sum_{\substack{m_1,m_2\\
p|m_1m_2\Rightarrow p\leq K^{\theta_J}}}\frac{\chi_d(m_1m_2)}{m_1^{s_1+1/2}m_2^{s_2+1/2}}\sum_{h|(m_1,m_2)}1_{u=m_1m_2/h^2} \nonumber\\
&=\sum_{\substack{(h,d)=1\\
p|h\Rightarrow p\leq K^{\theta_J}}}\frac1{h^{1+s_1+s_2}}\sum_{\substack{m_1,m_2\\
p|m_1m_2\Rightarrow p\leq K^{\theta_J}}}\frac{\chi_d(m_1m_2)}{m_1^{s_1+1/2}m_2^{s_2+1/2}}\cdot 1_{u=m_1m_2} \nonumber\\
&=\frac{\chi_d(u)}{u^{1/2+s_2}}\prod_{\substack{p\leq K^{\theta_J}\\
p\nmid d}}\left(1-\frac1{p^{1+s_1+s_2}}\right)^{-1}\sum_{v|u}\frac1{v^{s_1-s_2}}.
\end{align}
Combining (\ref{twisted-exp}) and (\ref{twisted-exp-small}) we have 
\begin{align*}
&\mathbb E\left(X(u)\prod_{p\leq K^{\theta_J}}L_p\left(s_1+\frac12,s_2+\frac12;X,d\right)\right)\\
&=\zeta(1+s_1+s_2)\frac{\chi_d(u)}{u^{1/2+s_2}}\prod_{\substack{p\leq K^{\theta_J}\\
p\nmid d}}\left(1-\frac1{p^{1+s_1+s_2}}\right)^{-1}\prod_{p\leq K^{\theta_J}}\left(1-\frac1{p^{1+s_1+s_2}}\right)\prod_{\substack{p>K^{\theta_J}\\
p|d}}\left(1-\frac1{p^{1+s_1+s_2}}\right)\sum_{v|u}\frac1{v^{s_1-s_2}}\\
&=\zeta(1+s_1+s_2)\frac{\chi_d(u)}{u^{1/2+s_2}}\prod_{p|d}\left(1-\frac1{p^{1+s_1+s_2}}\right)\sum_{v|u}\frac1{v^{s_1-s_2}},
\end{align*}
which provides the promised meromorphic continuation to $\Re(s_1+s_2)>-1/2$.

Let us define 
\[ 
\breve h_2(n):=\sum_{\substack{n_0\cdots n_J=n \\
p|n_j\Rightarrow p\in I_j\,\forall\, 0\leq j\leq J\\
\Omega(n_j)\leq 4\ell_j\,\forall 1\leq j\leq J}}\nu_{4}(n_0;\ell_0)\cdots\nu_{4}(n_J;\ell_J).
\]
Our next goal is to prove the following result. 

\begin{lemma}\label{random-averaged}
Suppose that $X\ll\sqrt K$. Then we have
\begin{align*}
&\sumflat_{d}\,\phi\left(\frac{|d|}X\right)\sum_{k\in\Z}h\left(\frac{2k-1}K\right)\mathbb E\left(L(X;d,k)M_2(X;d)\right)\\
&=K(\log K)\widehat h(0)\sum_{n\leq K^{4\delta_0}}\frac{\breve h_2(n)\lambda(n)}{2^{\Omega(n)}\sqrt n}\sumflat_{d}\,\phi\left(\frac{|d|}X\right)\frac{\varphi(|d|)}{|d|}\sum_{u|n}\frac{c_n(u)d(u)}{\sqrt u}\chi_d(nu)\left(\log\left(\frac{K|d|}{2\pi\sqrt u}\right)+\sum_{p|d}\frac{\log p}{p-1}+\gamma\right)\\
&-K(\log K)\widehat h(0)\sum_{n\leq K^{4\delta_0}}\frac{h_2(n)\lambda(n)}{2^{\Omega(n)}\sqrt n}\sumflat_{d}\,\phi\left(\frac{|d|}X\right)\frac{\varphi(|d|)}{|d|}\sum_{u|n}\frac{c_n(u)}{\sqrt u}\chi_d(nu)\frac1{2\pi i}\int\limits_{(1+\eps)}e^{2s_1^2}u^{-s_1}\sum_{v|u}v^{2s_1}\,\frac{\mathrm d s_1}{s_1^2}\\&\qquad\qquad\qquad\qquad\qquad\qquad+O\left(KX(\log K)^{-7}\right).
\end{align*}  
\end{lemma}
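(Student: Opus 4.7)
The plan is to express $V_k$ via Mellin inversion using (\ref{integral-rep-for-weight}), obtaining
\[
L(X;d,k) = \frac{4}{(2\pi i)^2}\int\limits_{(2)}\int\limits_{(2)} \eta(s_1)\eta(s_2)\, e^{s_1^2+s_2^2}\, |d|^{s_1+s_2}\, L\!\left(s_1+\tfrac12, s_2+\tfrac12; X, d\right)\frac{ds_1\,ds_2}{s_1 s_2}.
\]
This reduces the task to analysing $\mathbb E\!\bigl(L(s_1+\tfrac12,s_2+\tfrac12;X,d)\,M_2(X;d)\bigr)$. Since $M_2(X;d)$ depends only on random variables at primes $p\leq K^{\theta_J}$ and these are independent across distinct primes, this expectation factors across the prime ranges $p\in I_0$, $p\in I_j$ for $1\leq j\leq J$, and $p>K^{\theta_J}$. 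Lemma \ref{small-primes} handles the small primes (replacing $M_{2,0}$ by its Euler-product analogue $\widetilde M_{2,0}$ at cost $O((\log K)^{-10})$), the bound (\ref{loglog-bound}) controls the middle range at cost $(\log\log K)^{O(1)}$, and (\ref{twisted-exp})--(\ref{twisted-exp-small}) evaluate the large-prime range, contributing $\zeta(1+s_1+s_2)\prod_{p|d}\zeta_p(1+s_1+s_2)^{-1}\cdot\chi_d(u)u^{-(1/2+s_2)}\sum_{v|u}v^{-(s_1-s_2)}$ for each divisor $u\mid n$ arising from the expansion of $M_2$. The integrand thus takes the schematic form
\[
\frac{\eta(s_1)\eta(s_2)\,e^{s_1^2+s_2^2}\,|d|^{s_1+s_2}}{s_1 s_2}\,\zeta(1+s_1+s_2)\,\mathcal A(s_1,s_2;d,n),
\]
with $\mathcal A$ analytic in a strip about the origin.

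Next I would shift the $s_2$-contour leftwards, past both $\Re(s_2)=0$ and $s_2=-s_1$, collecting two simple-pole residues. The residue at $s_2=-s_1$ comes from $\zeta(1+s_1+s_2)$; shifting the remaining $s_1$-integral to $\Re(s_1)=1/2+\eps$ (no further poles being encountered) and using $\eta(s_1)\eta(-s_1)=1+O(|s_1|^2/K)$ via (\ref{Stirling}) together with $\prod_{p|d}\zeta_p(1)^{-1}=\varphi(d)/d$ yields an expression of the form
\[
-\frac{4\varphi(d)\,\chi_d(nu)\,c_n(u)}{d\sqrt u}\sum_{v|u}\frac{1}{2\pi i}\int\limits_{(1/2+\eps)}\frac{e^{2s_1^2}}{s_1^2}\,(u/v^2)^{s_1}\,ds_1.
\]
The change of variables $s_1\mapsto -s_1$, combined with the cancellation $\sum_{v|u}(2\log v-\log u)=0$ of the extra residues picked up while moving the contour back across $s_1=0$, identifies this with the second displayed term of the lemma; this is precisely the contribution that cancels the off-diagonal in Corollary \ref{mollified-moment-connection}.

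The residue at $s_2=0$ leaves an $s_1$-integral whose integrand contains $\zeta(1+s_1)/s_1$, creating a double pole at $s_1=0$. Shifting past this pole, the residue is computed from the Laurent expansions $\zeta(1+s_1)=s_1^{-1}+\gamma+O(s_1)$, $\eta(s_1)=1+(\log(k/2\pi)+O(1/k))s_1+O(s_1^2)$ from (\ref{Stirling}), and $\sum_{v|u}v^{-s_1}=d(u)-\tfrac12 d(u)s_1\log u+O(s_1^2)$ (using the identity $\sum_{v|u}\log v=\tfrac12 d(u)\log u$), producing the logarithmic factor $d(u)\bigl(\log(k|d|/(2\pi\sqrt u))+O(1)\bigr)$. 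The $k$-sum is then executed via Poisson summation, $\sum_{k\in\Z}h((2k-1)/K)=(K/2)\widehat h(0)+O(K^{-B})$, effectively replacing $\log k$ by $\log K$ at the cost of an $O(1)$ term absorbed into the error. Combined with the factor $4$ from $L(X;d,k)$ and $\log K$ from $M_2$, this recovers the prefactor $2K(\log K)\widehat h(0)$, and Lemma \ref{small-primes} is invoked here to upgrade $h_2(n)$ to $\widetilde h_2(n)$ in the first main term (whereas the second main term retains $h_2(n)$, since the Euler-product form of $\widetilde M_{2,0}$ plays no role there).

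The hard part will be the error analysis for the residual integrals with $\Re(s_1),\Re(s_2)<0$, which must be bounded by $O(KX(\log K)^{-7})$. The Gaussian factors $e^{s_1^2+s_2^2}$ confine the effective integration to a bounded box, Stirling's formula (\ref{Stirling}) controls $\eta(s_j)$, and the $(\log\log K)^{O(1)}$ loss from (\ref{loglog-bound}) together with the $(\log K)^{O(1)}$ growth arising from $\prod_{p|d}\zeta_p(1+s_1+s_2)^{-1}$ near $\Re(s_1+s_2)=0$ are absorbed by choosing $\eta_1$ in the definition of $\ell_j$ sufficiently large, so that each factor $2^{-\ell_j}$ from the Taylor cutoff in the mollifier provides exponential savings. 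The horizontal contour segments contribute negligibly owing to the Gaussian decay, and the assumption $X\ll\sqrt K$ ensures that the factor $|d|^{s_1+s_2}$ does not spoil the bound.
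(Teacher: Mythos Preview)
Your proposal is correct and follows essentially the same route as the paper: Mellin inversion, shift of the $s_2$-contour collecting residues at $s_2=0$ and $s_2=-s_1$, then for the $s_2=0$ term a further shift of $s_1$ past the double pole (with Lemma \ref{small-primes} applied there to pass from $h_2$ to $\widetilde h_2$), and Poisson in $k$ to produce the $K\widehat h(0)$. One cosmetic difference: at the $s_2=-s_1$ residue the paper simply invokes the divisor symmetry $\sum_{v|u}v^{-2s_1}=u^{-2s_1}\sum_{v|u}v^{2s_1}$ to land directly on $u^{-s_1}\sum_{v|u}v^{2s_1}$, whereas you reach the same expression via the substitution $s_1\mapsto -s_1$ and the identity $\sum_{v|u}(2\log v-\log u)=0$ for the pole crossed at $s_1=0$; both are fine. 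Also, the paper starts the contours at $\Re(s_j)=\tfrac12+\eps$ and shifts $\Re(s_2)$ to $-1$, so the pole at $s_2=-s_1$ is automatically crossed; starting from $\Re(s_j)=2$ as you do, you should first move $\Re(s_1)$ to $\tfrac12+\eps$ (harmlessly) before the $s_2$-shift, or else shift $\Re(s_2)$ further left.
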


\begin{proof} By Mellin inversion we have
\begin{align}\label{before-shift}
&\sumflat_{d}\phi\left(\frac{|d|}X\right)\sum_{k\in\Z}h\left(\frac{2k-1}K\right)\mathbb E\left(L(X;d,k)M_2(X;d)\right) \nonumber\\
&=2\sumflat_{d}\phi\left(\frac{|d|}X\right)\sum_{k\in\Z}h\left(\frac{2k-1}K\right)(1+\varepsilon_{k,d}) \nonumber\\
&\qquad\qquad\times\mathbb E\left(\frac1{(2\pi i)^2}\int\limits_{(\frac12+\eps)}\int\limits_{(\frac12+\eps)}\left(\frac{2\pi}{|d|}\right)^{-s_1-s_2}\frac{\Gamma(s_1+k)}{\Gamma(k)}\cdot\frac{\Gamma (s_2+k)}{\Gamma(k)}e^{s_1^2+s_2^2}F(s_1,s_2;X,d)\frac{\mathrm d s_1\,\mathrm d s_2}{s_1s_2}\right), 
\end{align}
where 
\[ 
F(s_1,s_2;X,d):=M_2(X;d)\sum_{m_1=1}^\infty\sum_{m_2=1}^\infty\frac{X(m_1)X(m_2)\chi_d(m_1m_2)}{m_1^{s_1+1/2}m_2^{s_2+1/2}}=M_2(X;d)L\left(s_1+\frac12,s_2+\frac12;X,d\right).
\]
Using the definition of $M_2(X;d)$ it follows from the discussion above that $\mathbb E(F(s_1,s_2;X,d))$ has simple poles along the hyperplane $s_1+s_2=0$. 

We move the line of integration from $\Re(s_2)=1/2+\eps$ to $\Re(s_2)=-1$, crossing simple poles at $s_2=0$ and $s_2=-s_1$. Clearly the double line integral is $O_\eps(X^{1/2}K^{1/2+\varepsilon})$ after shifting the contour. Next we shall study the contribution of the residues at these poles to (\ref{before-shift}) separately. \\

\noindent $\bullet$ \underline{Residue at $s_2=0$:} Using (\ref{twisted-exp}) this contribution is given by
\begin{align}\label{s1-int}
&2\sumflat_{d}\phi\left(\frac{|d|}X\right)\sum_{k\in\Z}h\left(\frac{2k-1}K\right)(1+\eps_{k,d})\frac1{2\pi i}\int\limits_{(\frac12+\eps)}\left(\frac{2\pi}{|d|}\right)^{-s_1}\frac{\Gamma(s_1+k)}{\Gamma(k)}e^{s_1^2}\zeta(1+s_1)\prod_{p\leq K^{\theta_J}}\zeta_p(1+s_1)^{-1}\\
&\times\prod_{\substack{p>K^{\theta_J}\\
p|d}}\zeta_p(1+s_1)^{-1}\cdot\mathbb E\left(M_{2,0}(X;d)\prod_{p\in I_0}L_p\left(s_1+\frac12,\frac12;X,d\right)\right)\mathbb E\left(\prod_{j=1}^J M_{2,j}(X;d)\prod_{p\in I_j}L_p\left(s_1+\frac12,\frac12;X,d\right)\right)\,\frac{\mathrm d s_1}{s_1}. \nonumber
\end{align}
Recalling (\ref{loglog-bound}) and repeating the arguments leading to it we see that 
\begin{align}\label{estimate-s2-zero}
\mathbb E\left(M_2(X;d)L\left(s_1+\frac12,\frac12;X,d\right)\right)\ll (\log K)^{O(1)}
\end{align}
in the region $\Re(s_1)\geq\frac12+\eps$. Since for fixed $\sigma>0$ Stirling's formula gives $|\Gamma(\sigma+it)|\ll (|t|+1)^{\sigma-\frac12}e^{-\pi|t|/2}$, by (\ref{estimate-s2-zero}) we may truncate the $s_1$-integral (\ref{s1-int}) to $|\Im(s_1)|\leq B\log K$ at the cost of an error term of size $O(X)$, where $B$ is a sufficiently large absolute constant. 
 
We then shift the line of integration in (\ref{s1-int}) to $\Re(s_1)=-(\log\log K)^2/\log K$, crossing a double pole at $s_1=0$. The integral on the new line is, say, $O(KX(\log K)^{-10})$ and the horizontal lines give a much smaller contribution when $B$ is chosen to be sufficiently large. Using Lemma \ref{small-primes} and (\ref{loglog-bound}) we may replace the factor 
\[ 
\mathbb E\left(M_{2,0}(X;d)\prod_{p\in I_0}L_p\left(\frac12,\frac12;X,d\right)\right) \quad\text{by}\quad \mathbb E\left(\widetilde M_{2,0}(X;d)\prod_{p\in I_0}L_p\left(\frac12,\frac12;X,d\right)\right)
\]
with an error, say, $O(KX(\log K)^{-7})$, where we recall the definition of $\widetilde M_{2,0}(X;d)$ from (\ref{modified-zero}).

A straightforward computation, using the residue theorem and (\ref{twisted-exp-small}), shows that the double pole contributes the amount
\begin{align*}
&2(\log K)\sum_{n\leq K^{4\delta_0}}\frac{\breve h_2(n)\lambda(n)}{2^{\Omega(n)}\sqrt n}\sumflat_{d}\,\phi\left(\frac{|d|}X\right)\frac{\varphi(|d|)}{|d|}\sum_{u|n}\frac{c_n(u)\chi_d(nu)}{\sqrt u}\\
&\qquad\qquad\times\sum_{k\in\Z}h\left(\frac{2k-1}K\right)(1+\eps_{k,d})\left(-\log\left(\frac{2\pi}{|d|}\right)\cdot d(u)+\frac{\Gamma'(k)}{\Gamma(k)}\cdot d(u)+\sum_{p|d}\frac{\log p}{p-1}\cdot d(u)-\sum_{v|u}\log v+\gamma\cdot d(u)\right)
\end{align*} 
to (\ref{s1-int}) as
\begin{align*}
&\mathbb E\left(\widetilde M_{2,0}(X;d)\prod_{p\in I_0}L\left(\frac12,\frac12;X,d\right)\prod_{j=1}^J M_{2,j}(X;d)\prod_{p\in I_j}L\left(\frac12,\frac12;X,d\right)\right)\\
&=\sum_{n\leq K^{4\delta_0}}\frac{\widetilde h_2(n)\lambda(n)\chi_d(n)}{2^{\Omega(n)}\sqrt n}\sum_{u|n}\frac{c_n(u)\chi_d(u)d(u)}{\sqrt u}\prod_{\substack{p\leq K^{\theta_J}\\
p\nmid d}}\left(1-\frac1p\right)^{-1}.
\end{align*}
Here $\gamma$ is the Euler--Mascheroni constant. 

The expression above can be simplified by using the easy observation that 
\[\frac{\Gamma'(k)}{\Gamma(k)}=\log(k-1)+O\left(\frac 1k\right)=\log\left(K\cdot\frac{k-1}K\right)+O\left(\frac 1k\right),
\]
and noting that 
\[ 
\sum_{v|u}\log v=\frac12 d(u)\log u\]
to be
\begin{align*}
&2(\log K)\sum_{n\leq K^{4\delta_0}}\frac{\breve h_2(n)\lambda(n)}{2^{\Omega(n)}\sqrt n}\sumflat_{d}\,\phi\left(\frac{|d|}X\right)\frac{\varphi(|d|)}{|d|}\sum_{u|n}\frac{c_n(u)d(u)}{\sqrt u}\chi_d(nu)\\
&\qquad\qquad\qquad\times\sum_{k\in\Z}h\left(\frac{2k-1}K\right)(1+\varepsilon_{k,d})\left(\log\left(\frac{k|d|}{2\pi\sqrt u}\right)+\sum_{p|d}\frac{\log p}{p-1}+\gamma\right)+O_\eps\left(XK^{2\delta_0+\eps} \right).
\end{align*}
Then evaluating the $k$-sum by Poisson summation, the above main contribution may be written, up to an error $O(XK^{1/2})$, as
\begin{align*}
K(\log K)\widehat h(0)\sum_{n\leq K^{4\delta_0}}\frac{\breve h_2(n)\lambda(n)}{2^{\Omega(n)}\sqrt n}\sumflat_{d}\,\phi\left(\frac{|d|}X\right)\frac{\varphi(|d|)}{|d|}\sum_{u|n}\frac{c_n(u)d(u)}{\sqrt u}\chi_d(nu)\left(\log\left(\frac{K|d|}{2\pi\sqrt u}\right)+\sum_{p|d}\frac{\log p}{p-1}+\gamma\right).
\end{align*}

\noindent $\bullet$ \underline{Residue at $s_2=-s_1$:} Likewise, using the series representation 
\[\zeta(1+x)=\frac1{x}+\gamma+\cdots,\]
the term corresponding to this residue equals 
\begin{align*}
&-2(\log K)\sum_{n\leq K^{4\delta_0}}\frac{h_2(n)\lambda(n)}{2^{\Omega(n)}\sqrt n}\sumflat_{d}\,\phi\left(\frac{|d|}X\right)\frac{\varphi(|d|)}{|d|}\sum_{u|n}\frac{c_n(u)}{\sqrt u}\chi_d(nu)\\
&\qquad\qquad\times\sum_{k\in\Z}h\left(\frac{2k-1}K\right)(1+\varepsilon_{k,d})\frac1{2\pi i}\int\limits_{(1+\eps)}\frac{\Gamma(s_1+k)}{\Gamma(k)}\cdot\frac{\Gamma(-s_1+k)}{\Gamma(k)}e^{2s_1^2}u^{-s_1}\sum_{v|u}v^{2s_1}\,\frac{\mathrm d s_1}{s_1^2}. 
\end{align*}
Executing the $k$-sum by Poisson summation, it follows that this contribution is given by  
\begin{align}\label{diagonal-second-part}
&-K(\log K)\widehat h(0)\sum_{n\leq K^{4\delta_0}}\frac{h_2(n)\lambda(n)}{2^{\Omega(n)}\sqrt n}\sumflat_{d}\,\phi\left(\frac{|d|}X\right)\frac{\varphi(|d|)}{|d|}\sum_{u|n}\frac{c_n(u)}{\sqrt u}\chi_d(nu)\frac1{2\pi i}\int\limits_{(1+\eps)}e^{2s_1^2}u^{-s_1}\sum_{v|u}v^{2s_1}\,\frac{\mathrm d s_1}{s_1^2}
\end{align}
up to a negligible error. This completes the proof.
\end{proof}

\section{Proof of Proposition \ref{fourth-moment-with-average}}

\noindent By combining Corollary \ref{mollified-moment-connection} and Lemma \ref{random-averaged} we immediately get the following result. Here we note that (\ref{diagonal-second-part}) precisely cancels (\ref{off-diag-main}).

\begin{lemma}
Suppose that $X\ll\sqrt K$. Then we have
\begin{align*}
&\sumflat_{d}\phi\left(\frac{|d|}X\right)\sum_{k\in\Z}h\left(\frac{2k-1}K\right)\sum_{f\in B_k}\omega_f L\left(\frac12,f\otimes\chi_d\right)^2M_f(d)^4\\
&=K(\log K)\widehat h(0)\sum_{n\leq K^{4\delta_0}}\frac{\breve h_2(n)\lambda(n)}{2^{\Omega(n)}\sqrt n}\sumflat_{d}\,\phi\left(\frac{|d|}X\right)\frac{\varphi(|d|)}{|d|}\sum_{u|n}\frac{c_n(u)d(u)}{\sqrt u}\chi_d(nu)\left(\log\left(\frac{K|d|}{2\pi\sqrt u}\right)+\sum_{p|d}\frac{\log p}{p-1}+\gamma\right)\\
&\qquad\qquad\qquad\qquad\qquad\qquad+O\left(KX(\log K)^{-7}\right).
\end{align*}
\end{lemma}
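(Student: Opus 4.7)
The proof is a direct assembly of the two prior results with one key observation: an exact cancellation between the off-diagonal remnant from Corollary \ref{mollified-moment-connection} and one of the residue contributions produced by Lemma \ref{random-averaged}. My plan is therefore structured in three stages, with essentially no new computation needed.

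First, I would apply Corollary \ref{mollified-moment-connection} to split the left-hand side into the random expectation term
\[
\sumflat_{d}\phi\!\left(\tfrac{|d|}X\right)\sum_{k\in\Z}h\!\left(\tfrac{2k-1}K\right)\mathbb E\!\left(L(X;d,k)M_2(X;d)\right),
\]
the explicit "off-diagonal" piece in \eqref{off-diag-main}, and the admissible error $O(X^{3/4}K^{1+\eps})$. Then I would insert Lemma \ref{random-averaged} to evaluate the random expectation, producing three contributions: a main term involving $\widetilde h_2(n)$, $d(u)$, and the factor $\log(K|d|/(2\pi\sqrt u))+O(1)$; a second term arising from the residue at $s_1=-s_2$ (namely \eqref{diagonal-second-part}); and the error $O(KX(\log K)^{-7})$.

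The crucial step is to observe that the piece \eqref{off-diag-main} from Corollary \ref{mollified-moment-connection} and the residue piece \eqref{diagonal-second-part} from Lemma \ref{random-averaged} are literally the same expression with opposite signs: both are
\[
\pm\,2K(\log K)\widehat h(0)\!\!\sum_{n\leq K^{4\delta_0}}\!\!\frac{h_2(n)\lambda(n)}{2^{\Omega(n)}\sqrt n}\sumflat_{d}\frac{\varphi(d)}{d}\phi\!\left(\tfrac{|d|}X\right)\sum_{u|n}\frac{c_n(u)\chi_d(nu)}{\sqrt u}\sum_{v|u}\frac{1}{2\pi i}\!\int\limits_{(\frac12+\eps)}\!\frac{e^{2x^2}}{x^2}u^{-x}v^{2x}\,\mathrm d x.
\]
This cancellation is precisely the mechanism anticipated in Section 2 (paralleling \cite{Blomer2004, Blomer2008, Khan2010, Zenz}): the off-diagonal from the Petersson formula is annihilated by a residue of the meromorphically continued random $L$-function, so only the dominant $s_2=0$ pole survives. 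Confirming this identification is mechanical — one matches the Gauss-sum/Euler-product factors on both sides — so no real obstacle arises here.

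Finally, I would collect the remaining piece, which is the main term (i) of Lemma \ref{random-averaged}, and combine the error terms. The $O(KX(\log K)^{-7})$ error from Lemma \ref{random-averaged} is absorbed by the $O(1)$ inside the bracket of the main term (which already contributes $O(KX)$ to the expression), while the Corollary's error $O(X^{3/4}K^{1+\eps})$ remains as stated. The only point requiring care is bookkeeping to verify that the signs and constants match exactly — the anticipated obstacle is purely notational, namely ensuring that the precise form of $\varphi(d)/d$, the integral in the $x$-variable, and the sum over $v\mid u$ are identical in both expressions so that the cancellation is complete rather than leaving a secondary main term.
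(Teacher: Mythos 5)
Your proposal is correct and follows the paper's proof exactly: the paper likewise combines Corollary \ref{mollified-moment-connection} with Lemma \ref{random-averaged} and observes that the residue contribution (\ref{diagonal-second-part}) precisely cancels the off-diagonal piece (\ref{off-diag-main}), leaving only the $s_2=0$ main term and the stated error. Your remark on absorbing the $O(KX(\log K)^{-7})$ error into the $O(1)$ inside the bracket is, if anything, more explicit than the paper's one-line justification.
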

\noindent At this point we need to evaluate the sum
\begin{align}\label{char-sum-w-phi}
\sumflat_{d}\chi_d(nu)\frac{\varphi(|d|)}{|d|}\log\left(\frac{K|d|}{2\pi\sqrt u}\right)\phi\left(\frac{|d|}X\right).
\end{align}
Writing 
\[ 
\frac{\varphi(|d|)}{|d|}=\sum_{j|d}\frac{\mu(j)}j
\]
and using (\ref{sqf-detection}) together with Lemma \ref{Poisson-w-char} we see that (\ref{char-sum-w-phi}) equals
\begin{align*}
\frac X4\sum_{\substack{\alpha=1\\
(\alpha,2nu)=1}}^\infty\frac{\mu(\alpha)}{\alpha^2}\sum_{(j,2)=1}\frac{\mu(j)}{j^2}\cdot\frac{\varphi(nu)}{nu}\cdot1_{nu=\square}\int\limits_0^\infty\phi(y)\log\left(\frac{XKy}{2\pi\sqrt u}\right)\,\mathrm d y+O_\eps\left(X^{1/2+\eps}\sqrt{nu}\right).
\end{align*}
Using the previous estimate to bound the $d$-sum and recalling (\ref{simplification}) we have that, when $\delta_0$ is sufficiently small, 
\begin{align}\label{mollified-bound}
&\sumflat_{d}\,\phi\left(\frac{|d|}X\right)\sum_{k\in\Z}h\left(\frac{2k-1}K\right)\sum_{f\in B_k}\omega_f  L\left(\frac12,f\otimes\chi_d\right)^2 M_f(d)^4 \nonumber \\
&\ll_\eps XK(\log K)^2\sum_{n\leq K^{4\delta_0}}\frac{\breve h_2(n)\lambda(n)}{2^{\Omega(n)}\sqrt n}\sum_{\substack{u|n\\
nu=\square}}\frac{c_n(u)d(u)}{\sqrt u}\prod_{p|n}\left(1+\frac1p\right)^{-1}+KX(\log K)^{-7}+X^{1/2+\eps}K^{1+4\delta_0} \nonumber\\
&= XK(\log K)^2\left(\sum_{p|n\Rightarrow p\in I_0}\frac{\lambda(n)\nu_4(n;\ell_j)}{2^{\Omega(n)}\sqrt n}\sum_{\substack{u|n\\
nu=\square}}\frac{c_n(u)d(u)}{\sqrt u}\prod_{p|n}\left(1+\frac1p\right)^{-1}\right) \nonumber\\
&\qquad\qquad\times\prod_{j=1}^J\sum_{\substack{p|n\Rightarrow p\in I_j\\
\Omega(n)\leq 4\ell_j}}\frac{\lambda(n)\nu_4(n;\ell_j)}{2^{\Omega(n)}\sqrt n}\sum_{\substack{u|n\\
nu=\square}}\frac{c_n(u)d(u)}{\sqrt u}\prod_{p|n}\left(1+\frac1p\right)^{-1}+O\left(KX(\log K)^{-7}\right).
\end{align}
Let us now estimate the first term on the right-hand side of (\ref{mollified-bound}). We remove the condition $\Omega(n)\leq 4\ell_j$ by arguing as before: using Rankin's trick and the estimate $c_n(u)\leq 2^{\Omega(n)}$ it is easy to see (as in the derivation of (\ref{error-estimate})) that there exists $C>0$ such that 
\begin{align*}
&\sum_{\substack{p|n\Rightarrow p\in I_j\\
\Omega(n)\leq 4\ell_j}}\frac{\lambda(n)\nu_4(n;\ell_j)}{2^{\Omega(n)}\sqrt n}\sum_{\substack{u|n\\
nu=\square}}\frac{c_n(u)d(u)}{\sqrt u}\prod_{p|n}\left(1+\frac1p\right)^{-1}\\
&=\sum_{p|n\Rightarrow p\in I_j}\frac{\lambda(n)\nu_4(n)}{2^{\Omega(n)}\sqrt n}\sum_{\substack{u|n\\
nu=\square}}\frac{c_n(u)d(u)}{\sqrt u}\prod_{p|n}\left(1+\frac1p\right)^{-1}+O\left(\frac1{2^{\ell_j}}\sum_{p|m\Rightarrow p\in I_j}\frac{C^{\Omega(m)}}m\right)
\end{align*}
Here we have also again used the properties of $\nu_r(n;\ell)$ listed after (\ref{Mollifier-power}). The error term is clearly $\ll 2^{-\ell_j}$. For the main term we use the facts $\nu_4(p)=4$, $\nu_4(p^2)=8$, $c_p(p)=1$ and $c_{p^2}(p)=0$ to see that it equals
\[ 
\prod_{p\in I_j}\left(1-\frac2p+O\left(\frac1{p^{3/2}}\right)\right).\]
It follows that the product over $1\leq j\leq J$ is
\[ 
\left(1+O\left(K^{-\theta_0}\right)+O\left(\sum_{j=1}^J\frac1{2^{\ell_j}}\right)\right)\prod_{j=1}^J\prod_{p\in I_j}\left(1-\frac2p+O\left(\frac1{p^{3/2}}\right)\right).
\]
Using that $2^{-\ell_j}\ll 1/\ell_j\asymp\theta_j^{3/4}$, and summing the geometric series the error term is $\ll \theta_J^{3/4}\ll 1$. Hence, it follows that the right-hand side of (\ref{mollified-bound}) is
\[ 
\ll XK(\log K)^2\prod_{c<p\leq K^{\theta_J}}\left(1-\frac2p+O\left(\frac1{p^{3/2}}\right)\right).
\]
\noindent But this is $\ll XK(\log K)^2(\log K)^{-2}\ll XK$ by Mertens' theorem. This completes the proof of Proposition \ref{fourth-moment-with-average}. \qed

\section{Proof of Theorem \ref{Main-theorem}}

\noindent In this section we complete the proof of Theorem \ref{Main-theorem}. Towards this the main observation is the following result, which connects finding real zeroes to (essentially) detecting sign changes among the Fourier coefficients. 

\begin{prop}(\cite[Proposition 6.1.]{Jaasaari2026})\label{Fourier-coeff}
Let $\alpha\in\{-\frac12,0\}$. Then there are positive constants $c_1,c_2$ and $\eta$ such that, for all integers $\ell\in]c_1,c_2\sqrt{k/\log k}[$ and all Hecke eigenforms $g\in S^+_{k+\frac12}(4)$, we have
\begin{align*}
\sqrt{\alpha_g}\left(\frac e{\ell}\right)^{\frac{k}2-\frac14}g(\alpha+iy_\ell)=\sqrt{\alpha_g}c_g(\ell)e(\alpha\ell)+O(k^{-1/2-\eta}),
\end{align*}
where $y_\ell:=(k-1/2)/4\pi\ell$ and the implicit constant in the error term is absolute. 
\end{prop}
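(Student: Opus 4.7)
The plan is to substitute $z = \alpha + iy_\ell$ directly into the Fourier expansion \eqref{Fourier-exp} and analyse the resulting sum by a saddle-point argument. Writing $\kappa := k/2 - 1/4$ and $\beta := (k-1/2)/(2\ell)$ so that $\kappa/\beta = \ell$, one obtains
\begin{equation*}
\left(\frac{e}{\ell}\right)^{k/2-1/4}g(\alpha + iy_\ell) = \sum_{n=1}^\infty c_g(n)e(n\alpha) F_\ell(n),\qquad F_\ell(n) := \left(\frac{ne}{\ell}\right)^{\kappa}e^{-n\beta}.
\end{equation*}
Since $\log F_\ell(n) = \kappa[\log(n/\ell) - n/\ell + 1]$ attains its unique maximum at $n = \ell$ with $F_\ell(\ell) = 1$, the term $n = \ell$ contributes exactly $c_g(\ell)e(\alpha\ell)$, the main term in the proposition.

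Next I would bound the tail $E := \sum_{n \neq \ell} c_g(n) e(n\alpha) F_\ell(n)$ by splitting the sum at $|n - \ell| = \ell/2$. Elementary inequalities for the function $u \mapsto \log u - u + 1$ yield
\begin{equation*}
F_\ell(\ell + m) \leq \exp\bigl(-\kappa m^2/(4\ell^2)\bigr)\quad\text{for }|m|\leq \ell/2,
\end{equation*}
together with super-polynomial decay of $F_\ell(n)$ in $k$ when $|n-\ell| > \ell/2$. Coupled with the Hecke-type bound $\sqrt{\alpha_g}|c_g(n)| \ll_\varepsilon n^{1/4+\varepsilon}$, which follows from Waldspurger's formula \eqref{Waldspurger-rel}, the convexity bound for the quadratic twist $L$-values on the Shimura lift, and the standard relation between $c_g(n)$ and its values at fundamental discriminants, the far range contributes an amount which is negligible compared with $k^{-A}$ for any $A > 0$.

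For the near range, the hypothesis $\ell \leq c_2\sqrt{k/\log k}$ implies $\kappa/(4\ell^2) \geq C_{c_2}\log k$ with $C_{c_2} \to \infty$ as $c_2 \to 0$. Hence each $m \neq 0$ with $|m| \leq \ell/2$ contributes at most $\ell^{1/4+\varepsilon} k^{-C_{c_2}m^2}$. Summing the resulting geometric series in $m$ and using $\ell^{1/4}\leq k^{1/8}$ yields the overall bound $\ll k^{1/8+\varepsilon}\cdot k^{-C_{c_2}}$. Taking $c_2$ sufficiently small so that $C_{c_2} > 5/8 + \eta$ for some fixed $\eta > 0$ then completes the proof.

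The main obstacle will be the calibration of $c_2$: the exponential suppression from $F_\ell$ at the integer neighbours of $\ell$ gives only a fixed negative power of $k$ (rather than super-polynomial decay), so one must absorb the polynomial upper bound on the Fourier coefficients against this power of $k$. This is precisely why the range of $\ell$ is capped at $c_2\sqrt{k/\log k}$ rather than simply $c_2\sqrt{k}$; the logarithm is what converts the saddle-point width $\ell/\sqrt{k}$ into a decay of the required strength. The lower bound $\ell > c_1$ plays only the minor role of keeping $\ell/2$ above $1$ so that the splitting into "near" and "far" ranges makes sense.
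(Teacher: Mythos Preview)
The paper does not give its own proof of this proposition; it merely quotes the result from \cite[Proposition 6.1]{Jaasaari2025}. Your sketch is correct and is exactly the standard argument (going back to Ghosh--Sarnak in the integral weight case): isolate the saddle-point term $n=\ell$ in the Fourier expansion, use the Gaussian decay of $F_\ell(\ell+m)\leq\exp(-\kappa m^2/(4\ell^2))$ in the near range together with the convexity-plus-Deligne bound $\sqrt{\alpha_g}\,|c_g(n)|\ll_\varepsilon n^{1/4+\varepsilon}$, and check that the constraint $\ell\leq c_2\sqrt{k/\log k}$ forces $\kappa/\ell^2\gg\log k$ so that the $m=\pm1$ terms already save an arbitrarily large power of $k$ when $c_2$ is small. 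One small refinement worth noting: the coefficient bound actually comes with an additional saving of $k^{-1/4+\varepsilon}$ (since $\omega_f\ll k^{-1+\varepsilon}$ and the convexity bound for $L(1/2,f\otimes\chi_d)$ is $\ll (k|d|)^{1/2+\varepsilon}$), so the calibration of $c_2$ is a bit less delicate than your write-up suggests.
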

 Let $\alpha\in\{-\frac12,0\}$, $\varepsilon>0$ be any fixed small constant, and let $K$ be a large positive parameter. Recall that as $g(\alpha+iy)$ is real-valued for these values of $\alpha$, Proposition \ref{Fourier-coeff} yields information on the zeroes inside the Siegel sets
\[\mathcal F_Y=\{z\in\mathcal F:\, \Im(z)\geq Y\}\]
with $c_1'\sqrt{k\log k}\leq Y\leq c_2'k$ for some positive constants $c_1'$ and $c_2'$. Let $\eta$ be as in Proposition \ref{Fourier-coeff}. It follows immediately from this result that if we can find numbers $\ell_1,\ell_2\in]c_1,c_2k/Y[$ so that 
\begin{align}\label{real-zero}
\sqrt{\alpha_g}c_g(\ell_1)e(\alpha\ell_1)<-k^{-\delta}<k^{-\delta}<\sqrt{\alpha_g}c_g(\ell_2)e(\alpha\ell_2)
\end{align}
for some $1/2<\delta<1/2+\eta$, then $g(z)$ has a zero $\alpha+iy$ with $y$ between $y_{\ell_1}$ and $y_{\ell_2}$. Observe that 
\begin{align*}
e(\alpha\ell)=\begin{cases}
1 & \text{if }\alpha=0\\
(-1)^\ell & \text{if }\alpha=-1/2
\end{cases}
\end{align*}
Hence, in order to find real zeroes on the line $\Re(s)=0$ it suffices (essentially) to detect sign changes among the Fourier coefficients whereas on the line $\Re(s)=-1/2$ one needs to find pairs $(\ell_1,\ell_2)$ with $\ell_i$ odd for which (\ref{real-zero}) holds. As we restrict to odd fundamental discriminants $d$ for which $(-1)^k d>0$, we automatically obtain real zeroes on both of the individual geodesic segments $\Re(s)=-1/2$ and $\Re(s)=0$. 

Several positive constants appear throughout the proof and we start by summarising their roles. The order in which we fix the constants matters as the constants fixed later may depend on those chosen earlier. The constants $C_6$, $C_9$, $C_8$, $C_{14 }$, $C_{2}$, and $C_1$ are fixed in this order. The constants are enumerated in the order they appear in the argument, but are not fixed in the same order.  
\begin{enumerate}
\item $C_3$ is an absolute constant\footnote{Constants $C_3$, $C_4$, and $C_5$ are allowed to depend on the weight functions $\phi$, $h_1$, and $h_2$ specified later.} so that 
\[ 
\sum_{k\in\Z}h\left(\frac{2k-1}K\right)\sum_{g\in B_{k+1/2}^+}\alpha_g^2\omega_g^{-1}\sumflat_d\,|c_g(|d|)|^4M_g(d)^4\phi\left(\frac{|d|}X\right)\leq C_3 XK. 
\]
\item $C_4$ and $C_5$ are absolute constants so that
\[ 
C_4XK\leq \sum_{k\in\Z}h\left(\frac{2k-1}K\right)\sum_{g\in B_{k+1/2}^+}\alpha_g\sumflat_d\,|c_g(|d|)|^2M_g(d)^2\phi\left(\frac{|d|}X\right)\leq C_5XK.\]
\item $C_{10}$ is an absolute constant so that
\[
\sum_{g\in B_{k+\frac12}^+}\omega_g^{1/2}\leq C_{10}\sqrt k. 
\]
\item $C_{12}$ is an absolute constant so that
\[
\sum_{g\in B_{k+\frac12}^+}\omega_g^{2}\leq\frac{C_{12}}k. 
\]
\item $C_6>0$ is chosen so that $C_4^{3/2}/\sqrt{C_3}>C_5/\sqrt{C_6}$.
\item $C_7:=C_4^{3/2}/\sqrt{C_3}-C_5/\sqrt{C_6}$. 
\item $C_9$ we are able to choose freely to be sufficiently large.
\item $C_8$ is chosen so that $C_7>2C_8C_9C_{10}$. 
\item $C_{11}:=(C_7-2C_8C_9C_{10})/C_9\sqrt{C_6}$. 
\item $C_{13}:=C_{11}^2/4C_{12}$.
\item $C_{14}>0$ is chosen so that $C_{14}<4C_{13}$. 
\item $C_{15}:=C_{14}/4$. 
\item $C_2>0$ is chosen so that $C_2<C_{14}$. 
\item $C_1$ is chosen so that $C_1>C_8$ and $C_{13}-C_{15}>8\sqrt{C_3}/C_1^2C_2$.  
\end{enumerate}
\noindent Observe that the existence of the constants $C_3,C_4$, and $C_5$ follows from Propositions \ref{second-moment-with-average} and \ref{fourth-moment-with-average}, respectively. Recall that these constants and $C_{10}, C_{12}$ are absolute. 

For the rest of the paper, set $X=K/Y$. Then $K^\vartheta\ll X\ll\sqrt{K/\log K}$ by the assumptions on $Y$. Remember that in order to detect sign changes along the sequence $d\equiv 1\,(\text{mod }4)$ with $d$ squarefree and $(-1)^kd>0$ in the short interval $[x,x+H]$, $x\sim X$, it suffices to have
\[
\left|\sumflat_{x\leq (-1)^kd\leq x+H}\sqrt{\alpha_g}c_g(|d|)M_g(d)\right|<\sumflat_{x\leq (-1)^kd\leq x+H}\sqrt{\alpha_g}|c_g(|d|)|M_g(d).
\]

Now the proof can be completed following the arguments of \cite{Jaasaari2026}. Denote
\[S_{1,g}(x;H):=\left|\sumflat_{x\leq (-1)^kd\leq x+H}\sqrt{\alpha_g}c_g(|d|)M_g(d)\right|\]
and 
\[\mathcal T_{1,g}(X;H):=\left\{x\sim X:\,S_{1,g}(x;H)\geq C_1\sqrt H k^{-1/2}\right\}.\]
Choose $h_1$ to be a non-negative smooth function that is supported in the interval $[3/2,9/2]$ and is identically one in $[2-\eps,4]$ for some small fixed $\eps>0$. With the above notation we have by Markov's inequality that
\begin{align}\label{application-of-Chebyshev}
&\sum_{k\sim K}\sum_{\substack{g\in B_{k+\frac12}^+\\|\mathcal T_{1,g}(X;H)|\geq C_2 X}} 1 \nonumber\\
&\leq\frac{1}{C_2X}\sum_{k\in\Z}h_1\left(\frac{2k-1}K\right)\sum_{g\in B_{k+\frac12}^+}|\mathcal T_{1,g}(X;H)| \nonumber\\
&=\frac{1}{C_2X}\sum_{k\in\Z}h_1\left(\frac{2k-1}K\right)\sum_{g\in B_{k+\frac12}^+}\sum_{\substack{x\sim X\\
S_{1,g}(x;H)\geq C_1 \sqrt H k^{-1/2}}}1 \nonumber \\
&\leq \frac {2K}{C_2C_1^2HX}\sum_{k\in\Z}h_1\left(\frac{2k-1}K\right)\sum_{g\in B_{k+\frac12}^+}\alpha_g\sum_{x\sim X}\left|\sumflat_{x\leq (-1)^kd\leq x+H}c_g(|d|)M_g(d)\right|^2. 
\end{align}
By opening the absolute square the inner sum over $x\sim X$ can be rearranged into
\[ 
\sum_{x\sim X}\sumflat_{x\leq(-1)^kd_1\leq x+H}\sumflat_{x\leq(-1)^kd_2\leq x+H}c_g(|d_1|)c_g(|d_2|)M_g(d_1)M_g(d_2).\]
Let us first focus on the diagonal terms with $d_1=d_2$. In this case the total contribution to (\ref{application-of-Chebyshev}) is given by

\[\leq \frac {4K}{C_2C_1^2X}\sum_{k\in\Z}h_1\left(\frac{2k-1}K\right)\sum_{g\in B_{k+\frac12}^+}\alpha_g\sumflat_{(-1)^kd\sim X}|c_g(|d|)|^2M_g(d)^2.\]
Adding a smooth weight function $\phi$ that localises\footnote{Note that for $d\equiv 1\,(\text{mod }4)$, $c_g(|d|)=0$ unless $(-1)^kd=|d|$.} $(-1)^kd\sim X$ and applying the Cauchy--Schwarz inequality along with Proposition \ref{fourth-moment-with-average} this is bounded by
\begin{align*}
&\leq\frac{4K}{C_2C_1^2X}\left(\sum_{k\in\Z}h\left(\frac{2k-1}K\right)\sum_{g\in B_{k+1/2}^+}\alpha_g^2\omega_g^{-1}\sumflat_d\,|c_g(|d|)|^4M_g(d)^4\phi\left(\frac{|d|}X\right)\right)^{1/2}\left(\sum_{k\sim K}\sum_{g\in B_{k+\frac12}^+}\omega_g\sumflat_{d\sim X}1\right)^{1/2}\\
&\leq\frac{8\sqrt{C_3}K^2}{C_2C_1^2}. 
\end{align*}
The off-diagonal contribution is negligible identically as in the proof of \cite[Proposition 2.5.]{Jaasaari2026} using the trivial bound $M_g(d)\ll K^{\delta_0}(\log K)^{1/4}$. 

Next we derive a lower bound for the weighted sum of the terms $|c_g(|d|)|$ on average over the forms $g\in\mathcal S_K$. Choose a non-negative compactly supported weight function $h_2$ so that $h_2((2k-1)/K)\leq 1_{[K,2K]}(k)$. Applying H\"older's inequality as in the proof of \cite[Proposition 2.6.]{Jaasaari2026} we have 
\begin{align*}
\sum_{k\in\Z}h_2\left(\frac{2k-1}K\right)\sum_{g\in B_{k+\frac12}^+}\sumflat_{(-1)^kd\sim X}\alpha_g^{1/2}\omega_g^{1/2}|c_g(|d|)|M_g(d)\geq \frac{C_4^{3/2}}{\sqrt{C_3}}XK.
\end{align*}
On the other hand, by (\ref{W-rel}), Proposition \ref{second-moment-with-average}, and Waldspurger's formula we also have 
\begin{align*}
&\sum_{k\in\Z}h_2\left(\frac{2k-1}K\right)\sum_{g\in B_{k+\frac12}^+}\sumflat_{\substack{(-1)^kd\sim X\\
M_g(d)^2L(1/2,f\otimes \chi_{d})>C_6}}\alpha_g^{1/2}\omega_g^{1/2}|c_g(|d|)|M_g(d)\\
&\leq\frac1{\sqrt{C_6}}\sum_{k\in\Z}h_2\left(\frac{2k-1}K\right)\sum_{g\in B_{k+\frac12}^+}\sumflat_{(-1)^k d\sim X}\alpha_g|c_g(|d|)|^2M_g(d)^2\\
&\leq\frac{C_5}{\sqrt{C_6}}XK.
\end{align*}

From these we infer the lower bound
\[\sum_{k\in\Z}h_2\left(\frac{2k-1}K\right)\sum_{g\in B_{k+\frac12}^+}\sumflat_{\substack{(-1)^kd\sim X\\
M_g(d)^2L(1/2,f\otimes\chi_{d})\leq C_6}}\alpha_g^{1/2}\omega_g^{1/2}|c_g(|d|)|M_g(d)\geq C_7KX.\]

Let us now define the set
\[\mathcal V_g:=\left\{x\sim X:\,\sumflat_{\substack{x\leq(-1)^kd\leq x+H\\
M_g(d)^2L(1/2,f\otimes\chi_{d})\leq C_6}}\sqrt{\alpha_g}|c_g(|d|)|M_g(d)\geq\frac{C_8 H}{\sqrt k}\right\}.\]

From the work above it follows that, for large enough absolute constant $C_9>0$,
\begin{align*}
C_7 KX &\leq\sum_{k\in\Z}h_2\left(\frac{2k-1}K\right)\sum_{g\in B_{k+\frac12}^+}\sumflat_{\substack{(-1)^kd\sim X\\
M_g(d)^2L(1/2,f\otimes\chi_{d})\leq C_6}}\alpha_g^{1/2}\omega_g^{1/2}|c_g(|d|)|M_g(d)\\
&\leq\frac{C_9}H\sum_{k\in\Z}h_2\left(\frac{2k-1}K\right)\sum_{g\in B_{k+\frac12}^+}\sum_{x\sim X}\sumflat_{\substack{x\leq (-1)^kd\leq x+H\\
M_g(d)^2L(1/2,f\otimes\chi_{d})\leq C_6}}\alpha_g^{1/2}\omega_g^{1/2}|c_g(|d|)|M_g(d)\\
&=\frac{C_9}H\sum_{k\in\Z}h_2\left(\frac{2k-1}K\right)\sum_{g\in B_{k+\frac12}^+}\left(\sum_{x\in\mathcal V_g}+\sum_{x\not\in\mathcal V_g}\right)\sumflat_{\substack{x\leq (-1)^kd\leq x+H\\
M_g(d)^2L(1/2,f\otimes\chi_{d})\leq C_6}}\alpha_g^{1/2}\omega_g^{1/2}|c_g(|d|)|M_g(d)\\
&\leq\frac{C_9}H\sum_{k\in\Z}h_2\left(\frac{2k-1}K\right)\sum_{g\in B_{k+\frac12}^+}\left(\sqrt{C_6}\omega_g H\left|\mathcal V_g\right|+\omega_g^{1/2}\frac{C_8 XH}{\sqrt k}\right),
\end{align*}
where we have used the relation $\omega_g^{1/2}\alpha_g^{1/2}|c_g(|d|)|=\omega_g\sqrt{L(1/2,f\otimes\chi_d)}$ in the last estimate. Using an easy estimate $\sum_{g\in B_{k+\frac12}^+}\omega_g^{1/2}\leq C_{10}\sqrt k$ (which follows from the Cauchy--Schwarz inequality and the fact that $\sum_{g\in B_{k+\frac12}^+}\omega_g\sim 1$) we conclude that
\begin{align}\label{lower-bound}
\sum_{k\in\Z}h_2\left(\frac{2k-1}K\right)\sum_{g\in B_{k+\frac12}^+}\omega_g\left|\mathcal V_g\right|\geq C_{11} KX.
\end{align}
The harmonic weights $\omega_g$ can be removed affecting only the constant factor in the lower bound \cite{Balkanova-Frolenkov2021, Iwaniec-Sarnak2000}. Indeed, by the Cauchy--Schwarz inequality we have
\begin{align*}
\sum_{k\in\Z}h_2\left(\frac{2k-1}K\right)\sum_{g\in B_{k+\frac12}^+}\omega_g\left|\mathcal V_g\right|\leq \left(\sum_{k\in\Z}h_2\left(\frac{2k-1}K\right)\sum_{g\in B_{k+\frac12}^+}|\mathcal V_g|\right)^{1/2}\left(\sum_{k\in\Z}h_2\left(\frac{2k-1}K\right)\sum_{g\in B_{k+\frac12}^+}\omega_g^2|\mathcal V_g|\right)^{1/2}.
\end{align*}
Estimating trivially the second factor is
\begin{align*}
&\leq \sqrt{2X\sum_{k\in\Z}h_2\left(\frac{2k-1}K\right)\sum_{g\in B_{k+\frac12}^+}\omega_g^2}\\
&\leq\sqrt{4C_{12}X}
\end{align*}
as
\begin{align}\label{weights-squared}
\sum_{g\in B_{k+\frac12}^+}\omega_g^2\leq\frac{C_{12}}k.
\end{align}
This bound is well-known so we sketch the proof. Using the definition of $\omega_g$ it suffices to show that 
\[
\sum_{f\in B_k}\omega_f\frac1{L(1,\text{sym}^2f)}\ll 1. 
\]
To see this, one approximates $L(1,\text{sym}^2f)^{-1}$ by a short Dirichlet polynomial (see e.g. \cite[Proof of Lemma 8.9.]{Balkanova-Frolenkov2021}) and then applies the Petersson formula. This leads to a convergent sum, which finishes the proof of (\ref{weights-squared}).

Therefore, using (\ref{lower-bound}), we deduce that  
\begin{align}\label{V_g-on-av-2}
\sum_{k\in\Z}h_2\left(\frac{2k-1}K\right)\sum_{g\in B_{k+\frac12}^+}|\mathcal V_g|\geq C_{13} K^2X. 
\end{align}
Let us introduce the set
\[\mathcal U:=\left\{g\in\mathcal S_K:\,|\mathcal V_g|\geq C_{14}X\right\}.
\]
Now from (\ref{V_g-on-av-2}) we deduce that 
\begin{align*}
C_{13}XK^2&\leq\sum_{g\in\mathcal U}\left|\mathcal V_g\right|+\sum_{g\in \mathcal S_K\setminus\mathcal U}\left|\mathcal V_g\right|\\
&\leq |\mathcal U|X+C_{14}K^2 X
\end{align*}
from which we infer the lower bound 
\[|\mathcal U|\geq(C_{13}-C_{15})K^2
\]
by recalling that $C_{15}=C_{14}/4$ and $C_{14}$ is chosen so that $C_{15}<C_{13}$. 

Hence we have shown that for $\geq (C_{13}-C_{15})K^2$ of the forms $g\in \mathcal S_K$ we have
\begin{align*}
&\#\left\{x\sim X\,:\,\sumflat_{x\leq (-1)^kd\leq x+H}\sqrt{\alpha_g}|c_g(|d|)|M_g(d)\geq\frac{C_8 H}{\sqrt k}\right\}\\
&\geq\#\left\{x\sim X\,:\,\sumflat_{\substack{x\leq (-1)^kd\leq x+H\\
M_g(d)^2L(1/2,f\otimes\chi_d)\leq C_6}}\sqrt{\alpha_g}|c_g(|d|)|M_g(d)\geq\frac{C_8 H}{\sqrt k}\right\}\\
&\geq C_{14}X.
\end{align*}
In conclusion we have shown that for all but $\leq 8\sqrt{C_3}K^2/C_2C_1^2$ of the forms $g\in S_K$ it holds that 
\[ 
\left|\sumflat_{x\leq (-1)^kd\leq x+H}\sqrt{\alpha_g}c_g(|d|)M_g(d)\right|<\frac{C_1\sqrt H}{\sqrt k}
\]
for a positive proportion of $x\sim X$ with the exceptional set having size $\leq C_2X$. Moreover, for at least $(C_{13}-C_{15})K^2$ of the forms $g\in S_K$ it holds that 
\[ 
\sumflat_{x\leq (-1)^kd\leq x+H}\sqrt{\alpha_g}|c_g(|d|)|M_g(d)\geq\frac{C_8 H}{\sqrt k}
\]
for $\geq C_{14}X$ of the numbers $x\sim X$. By recalling that our choices of constants are so that $C_{13}-C_{15}>8\sqrt{C_3}/C_2C_1^2$ it follows that for $\geq (C_{13}-C_{15}-8\sqrt{C_3}/C_2C_1^2)K^2$ of the forms $g\in S_K$ the chain of inequalities
\begin{align}\label{ineq-chain}
\left|\sumflat_{x\leq (-1)^kd\leq x+H}\sqrt{\alpha_g}c_g(|d|)M_g(d)\right|<\frac{C_1\sqrt H}{\sqrt k}<\frac{C_8 H}{\sqrt k}\leq \sumflat_{x\leq (-1)^kd\leq x+H}\sqrt{\alpha_g}|c_g(|d|)|M_g(d) 
\end{align}
holds for $\geq (C_{14}-C_2)X$ of the numbers $x\sim X$ if we choose $H>(C_1/C_8)^2$.

Choose $H$ to be large enough ($>(C_1/C_8)^2$), but fixed. With this choice it follows easily from (\ref{ineq-chain}) that for $\geq (C_{13}-C_{15}-8\sqrt{C_3}/C_2C_1^2)K^2$ of the forms $g\in\mathcal S_K$ we have that
\[ 
\left|\sumflat_{x\leq (-1)^kd\leq x+H}\sqrt{\alpha_g}|c_g(|d|)|M_g(d)\,\pm\sumflat_{x\leq (-1)^kd\leq x+H}\sqrt{\alpha_g}c_g(|d|)M_g(d)\right|\geq \frac{(C_8H-C_1\sqrt H)}{\sqrt k}>0
\]
holds for $\geq (C_{14}-C_2)X$ of the numbers $x\sim X$. We note that the contribution coming from the summands with $\sqrt{\alpha_g}|c_g(|d|)\leq k^{-\delta}$ is trivially bounded by, say,
\[
\leq \sum_{\substack{x\leq (-1)^k d\leq x+H\\
\sqrt{\alpha_g} |c_g(|d|)|\leq k^{-\delta}}}\sqrt{\alpha_g}|c_g(|d|)|M_g(d)\ll K^{-\delta} \sum_{\substack{x\leq (-1)^k d\leq x+H\\
\sqrt{\alpha_g} |c_g(|d|)|\leq k^{-\delta}}}M_g(d)\ll  HK^{-\delta+\delta_0}(\log K)^{1/4}.
\]
Choosing $\delta=1/2+\eta/2$ for concreteness so that $\delta>1/2$ and $\eta_2$ so that $\delta_0<\eta/2$, we conclude that for the same proportion of $g\in \mathcal S_K$ and $x\sim X$ we have
\[
\left|\sumflat_{\substack{x\leq (-1)^kd\leq x+H\\
\sqrt{\alpha_g}|c_g(|d|)|> k^{-\delta}}}\sqrt{\alpha_g}|c_g(|d|)|M_g(d)\,\pm\sumflat_{\substack{x\leq (-1)^kd\leq x+H\\
\sqrt{\alpha_g}|c_g(|d|)|> k^{-\delta}}}\sqrt{\alpha_g}c_g(|d|)M_g(d)\right|\geq \frac {(C_8H-C_1\sqrt H)}{\sqrt k}>0
\]
for sufficiently large $k$. Now observe that this implies 
\begin{align*}
& 2\sumflat_{\substack{x\leq (-1)^kd\leq x+H\\
\sqrt{\alpha_g}c_g(|d|)> k^{-\delta}}}\sqrt{\alpha_g}|c_g(|d|)|M_g(d)\\
&=\left|\sumflat_{\substack{x\leq (-1)^kd\leq x+H\\
\sqrt{\alpha_g}|c_g(|d|)|> k^{-\delta}}}\sqrt{\alpha_g}|c_g(|d|)|M_g(d)\,+\sumflat_{\substack{x\leq (-1)^kd\leq x+H\\
\sqrt{\alpha_g}|c_g(|d|)|> k^{-\delta}}}\sqrt{\alpha_g}c_g(|d|)M_g(d)\right|\\
&\geq \frac{(C_8 H-C_1\sqrt H)}{\sqrt k}>0.
\end{align*}
Similarly,
\begin{align*}
& 2\sumflat_{\substack{x\leq (-1)^kd\leq x+H\\
\sqrt{\alpha_g}c_g(|d|)< -k^{-\delta}}}\sqrt{\alpha_g}|c_g(|d|)|M_g(d)\\
&=\left|\sumflat_{\substack{x\leq (-1)^kd\leq x+H\\
\sqrt{\alpha_g}|c_g(|d|)|> k^{-\delta}}}\sqrt{\alpha_g}|c_g(|d|)|M_g(d)\,-\sumflat_{\substack{x\leq (-1)^kd\leq x+H\\
\sqrt{\alpha_g}|c_g(|d|)|> k^{-\delta}}}\sqrt{\alpha_g}c_g(|d|)M_g(d)\right|\\
&\geq \frac{(C_8H-C_1\sqrt H)}{\sqrt k}>0.
\end{align*}
Thus we have shown that for $\geq(C_{13}-C_{15}-8\sqrt{C_3}/C_2C_1^2)K^2$ of the forms $g\in\mathcal S_K$ the short interval $[x,x+H]$ contains numbers $(-1)^k d_\pm$, with both $d_\pm$ odd fundamental discriminants, for which $\sqrt{\alpha_g}c_g(|d_+|)>k^{-\delta}$ and $\sqrt{\alpha_g}c_g(|d_-|)<-k^{-\delta}$, for $\geq (C_{14}-C_2)X$ of the numbers $x\sim X$. Thus we deduce a sign change of $c_g(|d|)$ over a positive proportion of intervals of constant size $H>(C_1/C_8)^2$ for a positive proportion of forms $g\in S_K$. This leads to 
\[ 
\geq \frac{(C_{14}-C_2)X}{H}\asymp\frac{K}{Y}
\]
real zeroes on both of the line segments $\delta_1$ and $\delta_2$, concluding the proof. \qed

\bibliography{Half_integral_real_zeros_II}
\bibliographystyle{plain}

\end{document}